\newcommand{\FF}{{\mathbb{F}}}
\newcommand{\ZZ}{{\mathbb{Z}}}
\newcommand{\bk}{{\mathbf{k}}}
\newcommand{\bB}{{\mathbf{B}}}
\newcommand{\bC}{{\mathbf{C}}}
\newcommand{\bG}{{\mathbf{G}}}
\newcommand{\bH}{{\mathbf{H}}}
\newcommand{\bL}{{\mathbf{L}}}
\newcommand{\bT}{{\mathbf{T}}}
\newcommand{\bU}{{\mathbf{U}}}
\newcommand{\bw}{{\mathbf{w}}}
\newcommand{\fA}{{\mathfrak{A}}}
\newcommand{\fS}{{\mathfrak{S}}}
\newcommand{\cE}{{\mathcal{E}}}
\newcommand{\cF}{{\mathcal{F}}}
\newcommand{\Irr}{{\operatorname{Irr}}}
\newcommand{\PGL}{{\operatorname{PGL}}}
\newcommand{\PSL}{{\operatorname{PSL}}}
\newcommand{\GL}{{\operatorname{GL}}}
\newcommand{\SL}{{\operatorname{SL}}}
\newcommand{\GU}{{\operatorname{GU}}}
\newcommand{\SU}{{\operatorname{SU}}}
\newcommand{\PSU}{{\operatorname{U}}}
\newcommand{\PSp}{{\operatorname{S}}}
\newcommand{\Sp}{{\operatorname{Sp}}}
\newcommand{\GO}{{\operatorname{GO}}}
\newcommand{\SO}{{\operatorname{SO}}}
\newcommand{\OO}{{\operatorname{O}}}
\newcommand{\Ca}{{\operatorname{(C1)}}}
\newcommand{\Cb}{{\operatorname{(C2)}}}
\newcommand{\tB}{{\tilde B}}
\newcommand{\tD}{{\tilde D}}
\newcommand{\tG}{{\tilde G}}
\newcommand{\tw}[1]{{}^{#1}\!}
\def\pmod#1{~({\rm mod}~#1)}
\newcommand{\flr}[1]{{\lfloor #1\rfloor}}
\let\al=\alpha
\let\bt=\beta
\let\ga=\gamma
\let\eps=\epsilon
\let\del=\delta
\let\la=\lambda
\let\mdash=\models
\newtheorem{thm}{Theorem}[section]
\newtheorem{lem}[thm]{Lemma}
\newtheorem{prop}[thm]{Proposition}
\newtheorem{cor}[thm]{Corollary}
\newtheorem*{thmA}{Theorem 1}
\newtheorem*{conjA}{Conjecture}
\theoremstyle{remark}
\newtheorem{rem}[thm]{Remark}
\newtheorem{exmp}[thm]{Example}
\begin{document}

\title[Number of characters in blocks]{On the number of characters\\ in blocks of quasi-simple groups}

\date{\today}

\author{Gunter Malle}
\address{FB Mathematik, TU Kaiserslautern, Postfach 3049,
         67653 Kaiserslautern, Germany.}
\email{malle@mathematik.uni-kl.de}

\thanks{This paper is based upon work supported by the National Science
Foundation under Grant No. DMS-1440140 while the author was in residence at
the Mathematical Sciences Research Institute in Berkeley, California,
during the Spring 2018 semester.
The author also gratefully acknowledges financial support by SFB TRR 195.}

\keywords{Number of simple modules, invariants of blocks, inequalities for
  blocks of simple groups}

\subjclass[2010]{20C15, 20C33}

\begin{abstract}
We prove, for primes $p\ge5$, two inequalities between the fundamental
invariants of Brauer $p$-blocks of finite quasi-simple groups: the number of
characters in the block,
the number of modular characters, the number of height zero characters, and
the number of conjugacy classes of a defect group and of its derived subgroup.
For this, we determine these invariants explicitly, or at least give
bounds for them for several classes of classical groups.
\end{abstract}

\maketitle


\section{Introduction} \label{sec:intro}

In this paper we study two inequalities for the number of simple modules
in blocks of finite groups that had been proposed in joint work with
G.~Navarro \cite{MN06}.

Let $G$ be a finite group, $p$ be a prime, and $B$ a Brauer $p$-block of $G$
with defect group $D$. We write $D'=[D,D]$ for the derived subgroup of $D$.
Let $k(B)$ denote the number of ordinary irreducible characters in $B$ and
$k_0(B)$ the number of irreducible characters in $B$ of height zero (see
Section~\ref{sec:red}).
We write $k(D)$ for the number of conjugacy classes of $D$, and $l(B)$ for the
number of simple modules in characteristic~$p$ of $B$. In Malle--Navarro
\cite{MN06} we proposed the following inequalities connecting
these invariants:

\begin{conjA}
 Let $B$ be a $p$-block of a finite group with defect group $D$. Then
 $$k(B)\,/\,k_0(B)\le k(D')\quad\text{and}\eqno{\Ca}$$
 $$k(B)\,/\,l(B)\le k(D).\qquad\eqno{\Cb}$$
\end{conjA}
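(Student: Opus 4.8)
The plan is to establish (C1) and (C2) by the two-step scheme now standard for local--global results on blocks: a \emph{reduction theorem} that rephrases both inequalities, for an arbitrary finite group, in terms of a gluing condition on the quasi-simple groups involved, followed by a \emph{case-by-case verification} of that condition for all quasi-simple groups. First I would dispose of the cases that are already understood. If $B$ is nilpotent, then by Brou\'e--Puig it is basic Morita equivalent to $\mathcal{O}D$, so $l(B)=1$, $k(B)=k(D)$ and $k_0(B)=|D:D'|$; hence (C2) is an equality, while (C1) becomes the elementary inequality $k(D)\le|D:D'|\,k(D')$, a special case of $k(H)\le|H:N|\,k(N)$ for $N\trianglelefteq H$ (at most $|H:N|$ characters of $H$ lie over each $H$-orbit on $\Irr(N)$). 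If $G$ is $p$-solvable, (C1) and (C2) follow from the estimates of \cite{MN06} together with the solution of the $k(GV)$-problem. At the opposite extreme, if $D$ is abelian then (C1) reads $k(B)=k_0(B)$, so it contains the implication ``$D$ abelian $\Rightarrow$ all heights zero'' of Brauer's Height Zero Conjecture; thus (C1) is at least that deep.

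Second, the reduction. I would take a minimal counterexample $(G,B)$, with defect group $D$, to whichever of (C1), (C2) fails and induct on $|G|$. Clifford theory of blocks --- Fong--Reynolds to replace $G$ by the stabilizer of a block $b$ of a normal subgroup $N$ covered by $B$, Kn\"orr's control of defect groups in the block Clifford correspondence, and the comparison of $k,k_0,l$ for $B$ with the corresponding invariants of $b$ and of the dominated block of $G/N$ --- should reduce to the case that $G$ has a unique minimal normal subgroup $M$. If $M$ is a $p$-group one is back in $p$-solvable territory; otherwise $M$ is a direct power of a non-abelian simple group, $G$ is built from a quasi-simple group $S$ (a cover of that simple group), and (C1)/(C2) for $B$ must be deduced from an \emph{inductive} (C1)/(C2) \emph{condition} on the blocks of $S$ --- a statement packaging the numerical inequalities together with the behaviour of the characters under $\operatorname{Aut}(S)$ and under central extensions, strong enough to glue. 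The genuine difficulty here is the usual one: the extension and ramification data (projective representations of $G/M$, Dade's ramification group, block isomorphisms of character triples) must be built into the inductive condition, and one must check that the glued statement really returns (C1)/(C2) for $B$.

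Third --- and this is where the weight of the proof lies, and where the present paper does its work --- I would verify the inductive condition for every quasi-simple group, family by family. For the covers of symmetric and alternating groups the blocks are governed by the Nakayama conjecture and the combinatorics of $p$-cores and $p$-quotients; the defect groups are iterated wreath products of $C_p$, and $k(B)$, $k_0(B)$, $l(B)$, $k(D)$, $k(D')$ can all be computed explicitly from $p$-weights and the class-number formulas for wreath products. For the sporadic groups and their Schur covers it is a finite check from the known character tables and decomposition matrices. For groups of Lie type in defining characteristic there is a single block of full defect, with defect group a Sylow $p$-subgroup $U$, together with blocks of defect zero, so the estimates reduce to bounding $k(B)$ in terms of $k(U)$ and $k(U')$. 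For groups of Lie type in non-defining characteristic $p$ one uses the $d$-Harish-Chandra theory of Brou\'e--Malle--Michel: blocks correspond to $d$-cuspidal pairs, their defect groups are read off from Sylow $d$-tori, $k(B)$ is controlled by the relative Weyl group of the $d$-cuspidal pair, and the Bonnaf\'e--Rouquier Jordan decomposition of blocks reduces the general case to unipotent blocks; the invariants of $D$ and $D'$ then have to be extracted from the explicit structure of these defect groups.

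The \textbf{main obstacle} will be that last family: groups of Lie type in non-defining characteristic with a \emph{non-abelian} defect group. There one must match, simultaneously, sharp character counts for $B$ coming from Lusztig's classification against sharp class-number estimates for a defect group whose structure can be intricate --- especially for the exceptional groups of small rank, and at the bad primes $p=2,3$, where even the description of $D$ and the control of $l(B)$ become awkward. It is precisely to keep these estimates manageable that one restricts to $p\ge5$ and to quasi-simple $G$, as in this paper; removing either restriction, and recording a reduction theorem that correctly transports the ramification data, is what separates this program from an unconditional proof of the full conjecture.
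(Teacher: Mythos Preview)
The statement you are trying to prove is the \emph{Conjecture} of the paper, and the paper does not prove it; it proves only Theorem~1, which says that for $p\ge5$ (and in a few further cases) no $p$-block of a \emph{quasi-simple} group is a minimal counterexample. So there is no ``paper's own proof'' to compare against, and your proposal should be read as a program rather than a proof.

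The decisive gap in your program is the second step, the reduction theorem. You write that Clifford theory ``should reduce'' a minimal counterexample to a configuration governed by a quasi-simple group, and that an ``inductive (C1)/(C2) condition'' on quasi-simple groups would then suffice. No such reduction is known. The paper says so explicitly in the introduction: ``no reduction of our conjecture to the case of (quasi-)simple groups has been proposed so far.'' The difficulty is not merely cosmetic. Both inequalities compare a global invariant of $B$ with a purely local invariant of $D$ (namely $k(D)$ or $k(D')$), and neither side behaves multiplicatively or even monotonically under the standard Clifford-theoretic moves (Fong--Reynolds, passage to $G/N$, extensions by $p'$-automorphisms). In particular, $k(D')$ for a defect group of $B$ is not in any obvious way bounded by the corresponding quantities for the blocks $b$ of $N$ and $\bar B$ of $G/N$, so the usual ``character triple'' machinery does not assemble into the desired inequality. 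Your sketch names the right ingredients but does not indicate how to overcome this, and at present nobody knows how to.

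Your third step is closer to what the paper actually does, but note that the paper carries it out only for $p\ge5$ (plus defining characteristic, alternating and sporadic groups for all $p$); the cases $p=2,3$ for groups of Lie type in non-defining characteristic remain open even at the quasi-simple level, exactly for the reasons you identify in your last paragraph. So even granting a hypothetical reduction, the verification you need is incomplete.

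One small correction: in your first paragraph you say that for abelian $D$ inequality~(C1) ``contains'' the implication ``$D$ abelian $\Rightarrow$ all heights zero''. That is right, and it is precisely this implication (the Kessar--Malle theorem) that the paper invokes in Theorem~2.1 to dispose of the abelian-defect case for quasi-simple groups. But this does not, by itself, settle (C1) for abelian defect in arbitrary finite groups without a reduction --- which again brings you back to the missing second step.
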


It was shown in \cite{MN06}
that these statements are satisfied, for example, for blocks with a normal
defect group, but also for blocks of symmetric groups.
In the present paper we investigate a possible minimal counterexample to $\Ca$
or~$\Cb$ for blocks of non-abelian simple groups and their covering groups. Our
main result is:

\begin{thmA}
 Let $B$ be a $p$-block of a finite quasi-simple group $G$. Assume one of the
 following holds:
 \begin{enumerate}
  \item[\rm(1)] $p\ge5$, or
  \item[\rm(2)] $G$ is a covering group of an alternating group, of a sporadic
    group or of a simple group of Lie type in characteristic~$p$.
  \end{enumerate}
 Then $B$ is neither a minimal counterexample to~$\Ca$ nor to~$\Cb$.
\end{thmA}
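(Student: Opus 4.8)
The plan is to reduce the general assertion to a finite list of concrete cases about quasi-simple groups, and then to dispatch those cases using explicit knowledge of block invariants. A ``minimal counterexample'' to $\Ca$ (resp.\ $\Cb$) would be a $p$-block $B$ of some finite group $X$ with $k(B)/k_0(B)>k(D')$ (resp.\ $k(B)/l(B)>k(D)$) whose order $|X|$ is minimal among all counterexamples. The first step is a standard reduction, carried out in Section~\ref{sec:red}: using Clifford theory and the behaviour of blocks and their defect groups under normal subgroups and central extensions, one shows that a minimal counterexample $X$ must be quasi-simple, that $Z(X)$ is a $p'$-group or is cyclic of order divisible by $p$ in a controlled way, and that $B$ is a block of maximal defect in a suitable sense or at least that $D$ is non-abelian (since for abelian $D$ both inequalities are known from \cite{MN06}, as $k(D')=1=k(B)/k_0(B)$ by Kessar--Malle). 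So it suffices to prove: no $p$-block of a quasi-simple group $G$, under hypothesis (1) or (2), violates $\Ca$ or $\Cb$.

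Next I would run through the classification of finite quasi-simple groups. Sporadic groups and their covers: check directly from the known decomposition matrices and block data (ATLAS, GAP character table library), a finite computation. Alternating groups and their double covers: this is the content of \cite{MN06} for $\mathfrak{S}_n$ and $\mathfrak{A}_n$; the faithful blocks of the covering groups $\tilde{\mathfrak{A}}_n$, $\tilde{\mathfrak{S}}_n$ need the combinatorics of spin characters and bar-partitions, but the structure of defect groups (wreath-product-like, with $D'$ computable) makes both quotients $k(B)/k_0(B)$ and $k(B)/l(B)$ estimable. Groups of Lie type in their defining characteristic $p$: here $l(B)$ and $k(B)$ are controlled by the theory of Deligne--Lusztig and by the fact that for $p=$ the defining prime the blocks are essentially the principal block with a Sylow defect group, and the inequalities follow from bounds on $k(G)$ versus $|Z(D)|$ etc. The bulk of the work, and the reason for the hypothesis $p\ge 5$, is groups of Lie type in non-defining characteristic $\ell$ coprime to $p$: for exceptional groups one can use the (finite) list of unipotent blocks and Jordan decomposition to reduce to a bounded computation, but for classical groups $\GL_n$, $\GU_n$, $\Sp_{2n}$, $\SO_n^{\pm}$ (and their simple and simply connected versions) one genuinely has an infinite family and must \emph{determine or bound} $k(B)$, $k_0(B)$, $l(B)$ in terms of the combinatorics of $e$-cores and $e$-quotients (where $e$ is the order of the relevant root of unity mod $p$), and simultaneously understand the defect group $D$ — which is a Sylow $p$-subgroup of a product of general linear/unitary groups over extension fields — well enough to bound $k(D')$ and $k(D)$ from below.

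The key technical inputs I would marshal are: Jordan decomposition of characters (Bonnafé--Rouquier and Cabanes--Enguehard) to reduce to unipotent blocks; the classification of defect groups of unipotent blocks of classical groups (Cabanes--Enguehard, Fong--Srinivasan), which identifies $D$ with a Sylow $p$-subgroup of a group of the form $\GL_{a}(q^{d})\wr \fS_{w}$-type, whence $D'$ contains a large abelian piece and $k(D')$, $k(D)$ grow at least like partition-counting functions in $w$; and the count of characters in a unipotent block via $e$-quotients, giving $k(B)$ and $l(B)$ as sums over multipartitions. Comparing, one finds $k(B)/l(B)$ and $k(B)/k_0(B)$ are bounded by moderate functions of $w$ and the number of parts, while $k(D)$ and $k(D')$ dominate these — the reason $p\ge 5$ enters is that for $p=2,3$ the group $\GL_a(q^d)$ can have an abelian or too-small Sylow $p$-subgroup (e.g.\ $a=1$ and $d$ large forces cyclic Sylow), collapsing $k(D')$ to $1$ and breaking the margin, whereas for $p\ge 5$ one always retains enough non-commutativity (or enough classes) in $D$. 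The main obstacle, then, is the classical-groups estimate: proving the lower bounds on $k(D')$ and $k(D)$ for the specific $p$-groups $D$ that occur, and matching them against the exact character counts $k(B)$, $l(B)$, $k_0(B)$ coming from the $e$-quotient combinatorics — this is where the ``determine these invariants explicitly, or at least give bounds'' of the abstract does its work, and where one expects the longest case analysis (split by type $A$, $B/C$, $D$, and by whether $p\mid q\pm 1$ or $p$ has larger order mod $q$).
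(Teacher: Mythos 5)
Your overall case-by-case skeleton (sporadics by direct computation, alternating groups via Olsson's combinatorics, defining characteristic via class-number bounds, cross characteristic via Jordan decomposition down to unipotent blocks and then $e$-core/$e$-quotient combinatorics against wreath-product defect groups) is essentially the paper's route. But your opening step contains a genuine error. You assert that Section~\ref{sec:red} carries out a ``standard reduction'' showing that a minimal counterexample must be quasi-simple, and you then reformulate the goal as ``no $p$-block of a quasi-simple group violates $\Ca$ or $\Cb$.'' No such reduction exists; the introduction states explicitly that no reduction of the conjecture to (quasi-)simple groups has been proposed. Moreover, ``minimal counterexample'' here is not minimality of $|X|$ among all finite groups: it is defined relative to pairs $(G_1,B_1)$ with $|G_1/Z(G_1)|$ strictly smaller \emph{and} isomorphic defect groups. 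This definition is load-bearing. For a block in a non-isolated Lusztig series, Lemma~\ref{lem:BDR} (Bonnaf\'e--Dat--Rouquier) gives a Morita equivalence with a block of a proper Levi subgroup preserving defect groups; for isolated non-unipotent blocks at good primes, Enguehard's theorem (Proposition~\ref{prop:Enguehard}) gives a smaller group with identical invariants. Both arguments show only ``not a \emph{minimal} counterexample''; they do not verify the inequalities for those blocks. Your stronger reformulation is therefore not what the proposed tools can deliver, and attempting to verify $\Ca$, $\Cb$ outright for, say, an arbitrary non-isolated block of $E_8(q)$ would be a substantially harder (and unaddressed) problem.

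A second, smaller misconception: you locate the need for $p\ge5$ in the danger that the Sylow $p$-subgroup of $\GL_a(q^d)$ becomes abelian or cyclic, ``collapsing $k(D')$ to $1$.'' Abelian defect is in fact the \emph{easy} case, disposed of in Theorem~\ref{thm:abelian} by the proven direction of Brauer's height zero conjecture (giving $k(B)=k_0(B)$, so $\Ca$ is an equality) together with Feit's result for $\Cb$. The actual obstructions at $p=2,3$ are elsewhere: the multipartition estimates (Lemmas~\ref{lem:k(b,w)}--\ref{lem:k(B)}) lose their margin for small $\ell$, small-weight blocks of $\fS_n$ and $\fA_n$ genuinely violate the crude bound $k(\tilde B)\le k(\tilde D')$ and need the finer $k_0$ information, and Enguehard's parametrisation of isolated blocks is only available for good primes $\ell\ge5$. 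You would need to identify these as the points where the hypothesis enters, since they dictate where the residual finite case checks occur.
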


Here, we say that $(G,B)$ is a minimal counterexample to (C$i$) if (C$i$) holds
for all $p$-blocks $B_1$ of groups $G_1$ with $|G_1/Z(G_1)|$ strictly smaller
than $|G/Z(G)|$ and with defect groups isomorphic to those of $B$.

While we obtain partial results for the primes $p=2,3$, the case of groups
of Lie type in non-defining characteristic seems out of reach at the moment;
the case $p=3$ might be accessible to a similar but more tedious
investigation, but the prime $p=2$ will require a different approach.

Let us remark that no reduction of our conjecture to the case of
(quasi-)simple groups has been proposed so far.

The conjectured inequalities are closely related to three long-standing
conjectures in representation theory. Brauer's $k(B)$-conjecture claims that
$k(B)\le |D|$, while the Alperin--McKay conjecture relates the number $k_0(B)$
to the analogous number for the Brauer correspondent block of the normaliser
$N_G(D)$ of a defect group $D$ of $B$. Finally, Brauer's height zero conjecture
proposes that $k(B)=k_0(B)$ if and only if $D$ is abelian. In fact, we show in
Theorem~\ref{thm:abelian} that the statement of the known direction of this
conjecture implies~$\Ca$ in the case of abelian defect groups.
Let us point out one motivation for studying these: if the celebrated  
Alperin--McKay conjecture holds true, then $k_0(B)\le|D/D'|$ by the proven
kGV-conjecture. But then $\Ca$ claims that $k(B)\le |D| (k(D')/|D'|)$, which
in general is much smaller than the bound $|D|$ stipulated by Brauer's
$k(B)$-conjecture. Thus, if true, our conjecture would yield a better bound on
$k(B)$ than Brauer's (yet unproven) one.
\medskip

The paper is built up as follows: In Section~\ref{sec:red} we present some
first reductions. The covering groups of alternating groups are treated in
Section~\ref{sec:alt}. In Section~\ref{sec:def char} we verify the inequalities
for blocks of quasi-simple groups of Lie type for the defining prime in
Corollary~\ref{cor:def char}.
In Section~\ref{sec:cross class} we first present some general results for
groups of Lie type in cross characteristic and then show the conjecture for
blocks of groups of classical type (Corollary~\ref{cor:class}).
To this end we also
derive explicit formulas for invariants of unipotent blocks which we believe to
be of independent interest (see Theorems~\ref{thm:k0 SL} and~\ref{thm:k0 BCD}
and Proposition~\ref{prop:k0 PGL}). The groups of exceptional type are then
considered in Section~\ref{sec:cross exc}, see Theorem~\ref{thm:exc}.
\medskip

\noindent{\bf Acknowledgement:} I thank Frank Himstedt for providing me
with information on the principal 2- and 3-blocks of $\tw3D_4(q)$ and Donna
Testerman for a careful reading of a preliminary version.

\section{First reductions} \label{sec:red}

Let $G$ be a finite group and $p$ a prime. Let $\bk$ be an algebraically closed
field of characteristic~$p$. The decomposition of the group algebra $\bk G$
into a sum of minimal 2-sided ideals (called Brauer $p$-blocks) induces a
corresponding subdivision of the set of isomorphism classes of irreducible
$\bk G$-modules, as well as of the set $\Irr(G)$ of irreducible complex
characters of~$G$. If $B$ is such a $p$-block of $G$, then we write $l(B)$ for
the number of
isomorphism classes of irreducible $\bk G$-modules belonging to $B$, and
$k(B)$ for the number of irreducible complex characters in $B$. Associated to
a block $B$ is a conjugacy class of $p$-subgroups of $G$, the so-called defect
groups. If $D$ is a defect group of $B$, then $\bk D$ is a single $p$-block,
and so $k(\bk D)$ coincides with the number of conjugacy classes of $D$, which
we denote by $k(D)$. We write $k_0(B)$ for the number of characters in $B$
\emph{of height~0}, that is, for the number of elements in
$$\{\chi\in\Irr(B)\mid \chi(1)|D|/|G|_p\not\equiv0\pmod p\}.$$
We denote by $H'=[H,H]$ the derived subgroup of a group $H$.

We start with the following consequence of the proven direction of Brauer's
height zero conjecture:

\begin{thm}   \label{thm:abelian}
 Let $B$ be a $p$-block of a finite quasi-simple group with abelian defect
 groups. Then $B$ is not a counterexample to inequalities~$\Ca$ and~$\Cb$.
\end{thm}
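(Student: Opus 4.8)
The plan is to observe that, for abelian $D$, both inequalities reduce to results already available in the literature for quasi-simple groups. Since $D$ is abelian we have $D'=1$, so $k(D')=1$ and $k(D)=|D|$. Hence $\Ca$ reads $k(B)\le k_0(B)$; as $k_0(B)\le k(B)$ always holds, this is equivalent to $k(B)=k_0(B)$, that is, to the statement that every character in $B$ has height zero. This is exactly the implication ``$D$ abelian $\Rightarrow$ all heights zero'' of Brauer's height zero conjecture, which holds for quasi-simple groups: the reduction to quasi-simple groups is due to Berger and Kn\"orr, and the quasi-simple case was settled by Kessar and Malle. So $\Ca$ holds for $B$ --- this is the ``known direction'' referred to in the introduction.

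For $\Cb$ one must show $k(B)/l(B)\le k(D)=|D|$. I would pass to the Brauer correspondent $b$ of $B$ in $N_G(D)$: it has normal abelian defect group $D$, hence satisfies $\Ca$ and $\Cb$ by \cite{MN06}, so that $k(b)/l(b)\le k(D)$. It then remains to transfer invariants, namely $k(B)=k(b)$ and $l(B)=l(b)$; both equalities follow from Brou\'e's abelian defect group conjecture, which is known for $p$-blocks of quasi-simple groups with abelian defect groups --- for cyclic defect from Brauer tree theory, and in general from the work underlying the proof of Brauer's height zero conjecture and, for groups of Lie type, from the results of Bonnaf\'e--Rouquier and Craven--Dudas--Rouquier. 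Granting these, $k(B)/l(B)=k(b)/l(b)\le k(D)$, so $\Cb$ holds; note moreover that $k(B)=k(b)\le|D|$ by K\"ulshammer's description of blocks with normal defect group together with the solution of the $k(GV)$-problem, so that $\Cb$ even holds with room to spare as soon as $l(B)\ge2$.

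The decisive ingredient is thus the second step, which rests on the classification of finite simple groups and on case-by-case knowledge of the invariants of abelian-defect blocks of quasi-simple groups --- at the very least on the bound $k(B)\le|D|$. It is instructive to isolate the extreme case of a \emph{nilpotent} block $B$: there $l(B)=1$ and $k(B)=k(D)$ by Brou\'e and Puig, so $\Cb$ becomes an equality. This shows that no bound on $k(B)$ strictly smaller than $k(D)=|D|$ will suffice, and hence that genuinely global information --- rather than crude numerical estimates such as the Brauer--Feit bound $k(B)=O(|D|^2)$ --- is unavoidable here.
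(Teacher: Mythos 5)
Your treatment of $\Ca$ coincides with the paper's: for abelian $D$ one has $k(D')=1$, and the known direction of Brauer's height zero conjecture for quasi-simple groups \cite{KM13} gives $k(B)=k_0(B)$, so $\Ca$ holds with equality. (The Berger--Kn\"orr reduction you mention is not needed, since the statement is already restricted to quasi-simple groups.)

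Your argument for $\Cb$, however, has a genuine gap. You transfer the inequality from the Brauer correspondent $b$ in $N_G(D)$ back to $B$ via the equalities $k(B)=k(b)$ and $l(B)=l(b)$, justified by Brou\'e's abelian defect group conjecture, ``which is known for $p$-blocks of quasi-simple groups with abelian defect groups''. That is not the case: Brou\'e's conjecture --- and even its numerical shadows $l(B)=l(b)$ (Alperin's weight conjecture in the abelian defect case) and $k(B)=k(b)$ (Alperin--McKay combined with height zero) --- is open for general quasi-simple groups with abelian defect groups, being established only in special situations (cyclic defect, symmetric groups, certain unipotent and isolated blocks of groups of Lie type). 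So as written your proof of $\Cb$ rests on unproven conjectures. Moreover, your closing diagnosis points in the wrong direction: $\Cb$ asks for $k(B)/l(B)\le k(D)=|D|$, not for $k(B)\le|D|$, and exactly this quotient is controlled by an elementary and completely general result of Feit, cited in the paper via \cite[p.~22]{Sa14}: for each irreducible Brauer character $\varphi$ of $B$ the number of $\chi\in\Irr(B)$ with $d_{\chi\varphi}\ne0$ is at most $|D|$, whence $k(B)\le l(B)\,|D|$ for \emph{every} block. For abelian $D$ this is precisely $\Cb$, with no classification, no case-by-case knowledge of block invariants, and no local--global conjecture. Your nilpotent-block example correctly shows that the bound $k(B)/l(B)\le|D|$ is attained, but it does not show that one must first establish $k(B)\le|D|$; Feit's bound is sharp there as well and is entirely elementary.
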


\begin{proof}
Let $D$ denote a defect group of the block $B$. If $D$ is abelian, then
$k(D')=1$, and by the known direction of Brauer's height zero conjecture
\cite{KM13} we have $k_0(B)=k(B)$. So $\Ca$ holds trivially (with equality).
As pointed out by Sambale \cite[p.~22]{Sa14}, $\Cb$ holds by a result of Feit.
\end{proof}


Sambale has proved the validity of our inequalities for several types of
defect groups; we will need the following cases:

\begin{prop}   \label{prop:Sambale}
 Let $B$ be a 2-block of a finite group whose defect group is either
 metacyclic or a central product of a metacyclic group with a cyclic group.
 Then~$\Ca$ and~$\Cb$ are satisfied for $B$.
\end{prop}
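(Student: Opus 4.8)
The strategy is to reduce the statement to results already in the literature on blocks with metacyclic defect groups and on blocks with a central product of a metacyclic and a cyclic defect group, and to handle the inequalities $\Ca$ and $\Cb$ separately. First I would recall that for a $2$-block $B$ with metacyclic defect group $D$ the possible types of $D$ are very restricted: by the classification of finite metacyclic $2$-groups, $D$ is abelian, or quaternion, semidihedral or dihedral (the ``maximal class'' cases), or a genuinely metacyclic group of one of the remaining families. In the abelian case the statement is already covered by Theorem~\ref{thm:abelian} (and needs no hypothesis on $p$), so one only has to treat the non-abelian cases. In each of these, the invariants $k(B)$, $k_0(B)$, $l(B)$ have been computed, or at least bounded in terms of $k(D)$ and $k(D')$, in the work of Sambale (and, for the tame cases, in classical results going back to Brauer, Olsson and Erdmann): for dihedral, semidihedral and quaternion defect groups one has explicit small lists of the possible triples $(k(B),l(B),k_0(B))$, and for the other metacyclic families Sambale has established $\Ca$ and $\Cb$ directly.

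For $\Ca$, the point is that $D' $ is again metacyclic (indeed cyclic in all the relevant non-abelian cases, since a non-abelian metacyclic $2$-group has cyclic derived subgroup), so $k(D')=|D'|$ and the inequality $k(B)/k_0(B)\le k(D')=|D'|$ can be checked case by case against the known values: in the tame cases $k(B)/k_0(B)$ is bounded by a small constant (at most $2$ or so, since $k_0(B)$ is comparable to $|D/D'|$ by Brauer's results), while $|D'|$ grows, and in the remaining metacyclic families the same comparison goes through using Sambale's formulas. For $\Cb$, one uses that $k(D)\ge k(D/D')=|D/D'|$ together with the fact that, for these defect groups, $k(B)\le k(D)$ is already known (Brauer's $k(B)$-conjecture is a theorem for metacyclic $2$-groups by work of Sambale), and $l(B)\ge1$; more precisely one compares $k(B)/l(B)$ with $k(D)$ using the explicit values, which again leaves ample room.

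The extension to a central product $D=Q*C$ of a metacyclic group $Q$ with a cyclic group $C$ is then obtained by a standard central-extension reduction: passing to $B$ as a block of $G$ modulo the part of $Z(D)$ coming from $C$ does not change $k(B)/k_0(B)$ or $k(B)/l(B)$ (heights and the number of simple modules are unaffected by factoring out a central $p$-subgroup contained in every defect group), while $D'=Q'$ and $k(D)$, $k(D')$ change in a controlled way, so the inequalities for $Q$ imply those for $D$. The main obstacle I expect is purely bookkeeping: assembling the complete list of metacyclic $2$-groups together with the matching tables of block invariants from the several sources, and checking that the central-product reduction really leaves $k(B)/k_0(B)$ and $k(B)/l(B)$ invariant while tracking the small changes in $k(D)$ and $k(D')$; there is no conceptual difficulty, and in fact the cleanest write-up is simply to cite Sambale's monograph, where precisely these defect groups are treated, and record that the stated inequalities are read off from his results.
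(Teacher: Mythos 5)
Your proposal is correct and ends up exactly where the paper does: the paper's proof consists solely of citing Sambale's monograph, namely \cite[Cor.~8.2]{Sa14} for metacyclic defect groups and \cite[Thm.~9.1]{Sa14} for the central products. Your preliminary sketch of the case analysis (tame cases, cyclic derived subgroup, central-product reduction) is a reasonable outline of what lies behind those citations, but the paper does not reproduce any of it.
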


\begin{proof}
The claim for metacyclic groups is proved in \cite[Cor.~8.2]{Sa14}, the one for
central products in \cite[Thm.~9.1]{Sa14}.
\end{proof}

The following bound (see Pantea \cite[Prop.~3.2]{P04}) on the number of
conjugacy classes of $p$-groups will be useful in dealing with small cases:

\begin{prop}   \label{prop:p-group}
 Let $D$ be a $p$-group of order $|D|=p^n$. Then $k(D)\ge p^2+(n-2)(p-1)$.
\end{prop}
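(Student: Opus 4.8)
The plan is to induct on $n$, the crucial input being the class equation together with the fact that a $p$-group has center of order at least $p$. For the base cases $n=0,1,2$ the bound reads $k(D)\ge p^2-(p-1)$, $p^2$, $p^2$ respectively after substituting; but of course a group of order $p^n$ has at most $p^n$ classes, so the inequality is only meaningful (and needs checking) once $p^2+(n-2)(p-1)\le p^n$. For $n\le 2$ one checks it directly: for $n=0$ the claim is $k(D)=1\ge p^2-p+1$, which is false for $p\ge 2$, so in fact the statement is implicitly for $n\ge 2$ (where abelian groups of order $p^2$ give $k(D)=p^2$, matching the bound with equality). First I would therefore treat $n=2$ as the genuine base case: every group of order $p^2$ is abelian, so $k(D)=p^2=p^2+(2-2)(p-1)$.

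For the inductive step, suppose $|D|=p^n$ with $n\ge 3$ and the bound holds for all $p$-groups of smaller order. Let $Z=Z(D)$, so $|Z|\ge p$. Pick a central subgroup $C\le Z$ of order $p$ and pass to $\bar D=D/C$, which has order $p^{n-1}$; by induction $k(\bar D)\ge p^2+(n-3)(p-1)$. The key step is to compare $k(D)$ with $k(\bar D)$: I would use the standard fact that the quotient map $D\to\bar D$ induces, on conjugacy classes, a surjection whose fibers I want to bound below on average — more precisely, a classical lemma (essentially due to the behavior of class sizes under central extensions) gives $k(D)\ge k(\bar D) + (p-1)$ whenever $C$ is central of order $p$. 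One clean way to see this: the $p$ characters of $D$ lying over a fixed nontrivial character of $C$ all vanish off... — rather, dualize and count: $D$ has $k(D)$ irreducible characters, those trivial on $C$ are exactly the $k(\bar D)$ characters of $\bar D$, and the remaining $k(D)-k(\bar D)$ characters are nontrivial on $C$; since $C\cong \mathbb Z/p$ has $p-1$ nontrivial characters and an averaging/transitivity argument (the group $\operatorname{Irr}(C)$ acts by tensoring and permutes these remaining characters with orbits of size dividing $p$, hence of size $1$ or $p$, but an orbit of size $1$ would force the character to be trivial on $C$) shows there are at least $p-1$ of them. Combining, $k(D)\ge k(\bar D)+(p-1)\ge p^2+(n-3)(p-1)+(p-1)=p^2+(n-2)(p-1)$, completing the induction.

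The main obstacle is establishing the inequality $k(D)\ge k(D/C)+(p-1)$ cleanly; the tensoring-action argument above is the slick route, but one must make sure the orbit-size dichotomy is justified (it follows since $\operatorname{Irr}(C)$ has prime order $p$, so every orbit has size $1$ or $p$, and a fixed point $\chi\otimes\lambda=\chi$ for all $\lambda\in\operatorname{Irr}(C)$ forces $\chi|_C$ to be a multiple of the regular character of $C$ restricted appropriately, hence — comparing with $\chi|_C=\chi(1)\lambda_0$ for a single $\lambda_0$ since $C$ is central — forces $\lambda_0$ trivial). Everything else is bookkeeping. An alternative to the character-theoretic step, if one prefers to stay with classes, is to invoke directly the known result that central extensions by $\mathbb Z/p$ increase the class number by at least $p-1$, but citing Pantea's proposition as in the statement, I would simply reproduce her argument, which is exactly this induction.
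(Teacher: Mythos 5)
The paper offers no proof of this proposition at all --- it is quoted from Pantea's article --- so there is no in-house argument to measure yours against. Your overall strategy is certainly the natural one and the bookkeeping is right: base case $n=2$, where $D$ is abelian and $k(D)=p^2$; then pass to $D/C$ for a central $C$ of order $p$ and gain $p-1$ classes at each step, so $p^2+(n-3)(p-1)+(p-1)=p^2+(n-2)(p-1)$. You are also correct that the inequality fails for $n\le 1$ and must be read with $n\ge2$, which is how the paper applies it.

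There is, however, a genuine gap in your justification of the key inequality $k(D)\ge k(D/C)+(p-1)$. You let ``$\Irr(C)$ act by tensoring'' on $\Irr(D)$, but no such action exists: to form $\chi\otimes\lambda$ for $\chi\in\Irr(D)$ and $\lambda\in\Irr(C)$ you must first extend $\lambda$ to a linear character of $D$, and a nontrivial $\lambda$ extends only when $C\not\le D'$. The case $C\le D'$ is exactly the critical one: for $D$ extraspecial of order $p^3$ with $C=Z(D)=D'$ one has $k(D)=p^2+p-1$, attaining the bound with equality, and no nontrivial character of $C$ extends to $D$. So the orbit-size dichotomy you invoke rests on an action that is unavailable in precisely the extremal examples. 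The repair is standard and shorter than what you wrote: since $C$ is central, each $\chi\in\Irr(D)$ satisfies $\chi|_C=\chi(1)\lambda_\chi$ for a unique $\lambda_\chi\in\Irr(C)$, and the resulting map $\Irr(D)\to\Irr(C)$ is surjective because for every $\lambda\in\Irr(C)$ the induced character $\lambda^D$ is nonzero and, by Frobenius reciprocity, any irreducible constituent of it lies over $\lambda$. The fibre over the trivial character is exactly $\Irr(D/C)$, and the remaining $p-1$ fibres are nonempty and pairwise disjoint, whence $k(D)\ge k(D/C)+(p-1)$. With that substitution your induction goes through.
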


\begin{prop}   \label{prop:spor}
 Let $G$ be a covering group of a sporadic simple group or of $\tw2F_4(2)'$.
 Then inequalities~$\Ca$ and~$\Cb$ hold for all blocks of $G$.
\end{prop}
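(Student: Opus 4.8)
The plan is to proceed by a direct, case-by-case verification across the covering groups of the sporadic simple groups (together with the Tits group $\tw2F_4(2)'$), exploiting the fact that all the relevant invariants are explicitly available. For each such $G$ and each prime $p$ dividing $|G|$, one needs to check inequalities $\Ca$ and $\Cb$ for every $p$-block $B$ of $G$. The ordinary and modular character tables, together with the block distribution of characters and the decomposition matrices (hence $k(B)$, $k_0(B)$, and $l(B)$), are recorded in the GAP character table library and, for the modular data, in the work of the modular Atlas project; heights of characters are read off from the degrees once the defect of $B$ is known. So the left-hand sides of $\Ca$ and $\Cb$ are computable for every pair $(G,B)$.

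For the right-hand sides one needs $k(D')$ and $k(D)$ where $D$ is a defect group of $B$. For blocks of full defect one has $D\in\operatorname{Syl}_p(G)$, and the isomorphism type of a Sylow $p$-subgroup of a sporadic group (or of $\tw2F_4(2)'$) is known in all cases; for blocks of non-full defect the defect groups are smaller $p$-groups whose structure is likewise documented in the literature on sporadic groups. From the structure of $D$ one extracts $k(D)$ and $k(D')$ directly, or, when a sharp value is not needed, one falls back on the lower bound $k(D)\ge p^2+(n-2)(p-1)$ from Proposition~\ref{prop:p-group} (and a similar estimate applied to $D'$), which already suffices whenever $|D|$ is not too small. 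The cases of cyclic defect groups are immediate since then $k(D')=1$ and Brauer's height zero conjecture is known (cf.\ Theorem~\ref{thm:abelian}), and abelian defect groups are likewise handled by Theorem~\ref{thm:abelian}; in particular all blocks of $p$-defect $\le 2$ and a great many others drop out at once. Blocks with small non-abelian defect groups — metacyclic, or central products of metacyclic with cyclic, which do occur for $p=2$ — are covered by Proposition~\ref{prop:Sambale}. What remains after these general reductions is a bounded, finite list of $(G,p,B)$ with small $|D/Z(G)|$, and for those one simply compares the exact numbers.

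The main obstacle is bookkeeping rather than conceptual: one must organise the enormous number of blocks so that only finitely many genuinely require hand computation, and one must be sure the decomposition-matrix data (hence $l(B)$) are actually known for every sporadic covering group and every prime — there are a handful of cases, notably large blocks of the Monster and Baby Monster at small primes, where the full decomposition matrix is not completely determined. For those, the strategy is to use known bounds on $l(B)$ together with the $k(B)\le |D|$ inequality (or better bounds where available) and the comparatively large value of $k(D)$ forced by Proposition~\ref{prop:p-group}, so that $\Cb$ still follows; $\Ca$ does not involve $l(B)$ and so is unaffected. One should also double-check the non-quasi-simple subtlety that the statement is about covering groups $G$, so one must range over all isoclinism types / all central extensions listed in the Atlas, not merely the simple quotient.
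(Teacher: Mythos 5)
Your proposal is correct and follows essentially the same route as the paper: a direct case-by-case check against the known character tables, using the class-number bound of Proposition~\ref{prop:p-group} for the defect groups, with only a short residual list of faithful $2$-blocks (e.g.\ of $2.HS$, $2.Ru$, $6.Suz$, $2.Co_1$, \dots) needing the explicit verification that $k(D')\ge 8$. The extra reductions you invoke (Theorem~\ref{thm:abelian}, Proposition~\ref{prop:Sambale}) and your handling of incompletely known decomposition matrices via $l(B)\ge1$ are sensible organisational refinements but not a different method.
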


\begin{proof}
This is an elementary check based on the known character tables of the
quasi-simple groups in question, using the bound from
Proposition~\ref{prop:p-group}. Only the first inequality is not immediate in
all cases, more precisely, not so for certain faithful 2-blocks of
$$4.M_{22},\ 2.HS,\ 2.Ru,\ 2.Suz,\ 6.Suz,\ 2.Fi_{22},\ 6.Fi_{22},\
  Fi_{23} \text{ and }2.Co_1.$$
In these cases it suffices to check that defect groups $D$ satisfy $k(D')\ge8$.
\end{proof}

\begin{prop}   \label{prop:exc cover}
 Let $G$ be an exceptional covering group of a simple group of Lie type or of
 the alternating group $\fA_7$. Then inequalities~$\Ca$ and~$\Cb$ hold for all
 blocks of $G$.
\end{prop}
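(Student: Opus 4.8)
The plan is to proceed by an explicit, case-by-case verification over the (finite) list of exceptional covering groups, mirroring the strategy used for sporadic groups in Proposition~\ref{prop:spor}. First I would enumerate the relevant groups: the exceptional Schur multipliers give the quasi-simple groups $2.\OO_7(3)$ (equivalently a double cover of $\PSp_6(3)$... being careful with notation), $3.\OO_8^+(2)$ together with larger covers $2^2.\OO_8^+(2)$, $(2^2\times 3).\OO_8^+(2)$, the covers $2.\SL_3(2)=\SL_2(7)$, $6.\PSL_3(4)$ and its subcovers $2.\PSL_3(4)$, $4_1.\PSL_3(4)$, $4_2.\PSL_3(4)$, $12_1.\PSL_3(4)$, etc., the covers $2.\SU_4(2)$, $2.\Sp_6(2)$, $3.G_2(3)$, $2.F_4(2)$, $2.\tw2E_6(2)$ (and $2^2.\tw2E_6(2)$, $(2^2\times 3).\tw2E_6(2)$, $6.\tw2E_6(2)$), $2.\tw2B_2(8)$, $2.B_n(2)/C_n(2)$ sporadic doublings, and the exceptional cover $6.\fA_7$ with its sections $2.\fA_7$ and $3.\fA_7$. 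For each such $G$ and each prime $p$ dividing $|G|$, I would run through the $p$-blocks.

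The key steps are as follows. For blocks of positive defect, the decomposition matrices and character degrees of all these quasi-simple groups are available (GAP character table library, and the published/unpublished decomposition matrices for the small Lie type groups), so $k(B)$, $k_0(B)$, and $l(B)$ can be read off directly. For the defect groups $D$, one identifies the conjugacy class of $p$-subgroups attached to each block; since the groups are small one can compute $D$ and $D'$ explicitly and hence $k(D)$ and $k(D')$. Inequality~$\Cb$, $k(B)\le l(B)\,k(D)$, and inequality~$\Ca$, $k(B)\le k_0(B)\,k(D')$, are then checked numerically. For blocks of defect zero both sides behave trivially ($k(B)=k_0(B)=l(B)=1$, $D=1$). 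Where a defect group is abelian one may instead invoke Theorem~\ref{thm:abelian} and avoid computation; where a 2-block has metacyclic or metacyclic-times-cyclic defect group one may invoke Proposition~\ref{prop:Sambale}. For the remaining cases the bound $k(D')\ge p^2+(n'-2)(p-1)$ of Proposition~\ref{prop:p-group}, where $p^{n'}=|D'|$, frequently suffices: it reduces the verification of $\Ca$ to confirming $k(B)/k_0(B)$ is at most that explicit lower bound for $k(D')$.

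The main obstacle, exactly as in the sporadic case, will be inequality~$\Ca$ for a handful of faithful 2-blocks of large defect in the groups with a large 2-part of the Schur multiplier — principally $2.\tw2E_6(2)$ and its further central extensions, $2.F_4(2)$, $2.\Sp_6(2)$, $2.\SU_4(2)$, and $6.\PSL_3(4)$ with its $4_i$- and $12_i$-covers — where $k(B)/k_0(B)$ can be moderately large and the crude $k(D')$-bound from Proposition~\ref{prop:p-group} is not by itself enough. In those cases the resolution is to pin down the isomorphism type of $D'$ more precisely (using the known structure of Sylow $2$-subgroups and of the block's defect group, e.g.\ that $D$ is a specific extension whose derived subgroup is itself a recognisable group) and then either compute $k(D')$ exactly or cite a better lower bound for $k$ of that particular type of $2$-group. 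I expect that in every case a suitable lower bound of the form $k(D')\ge 8$ (or a slightly larger explicit constant) is available and dominates the computed value of $k(B)/k_0(B)$, completing the check; the write-up would simply list the finitely many groups where this extra argument is needed, in the style of Proposition~\ref{prop:spor}.
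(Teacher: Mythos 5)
Your proposal is correct and follows essentially the same route as the paper: a direct case-by-case check over the finitely many exceptional covers using their known character tables, with the only non-immediate point being inequality~$\Ca$ for certain faithful 2-blocks of the covers with large Schur multiplier, resolved by verifying $k(D')\ge 8$ for the relevant defect groups. (The paper's actual list of problematic groups is $12_1.\PSU_4(3)$, $12_2.\PSU_4(3)$, $2.\PSU_6(2)$, $6.\PSU_6(2)$, $2.\OO_8^+(2)$, $2.F_4(2)$, $2.\tw2E_6(2)$ and $6.\tw2E_6(2)$, which differs slightly from the candidates you single out, but this does not affect the method.)
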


\begin{proof}
Again, the ordinary character tables of all groups in question are known
and the claim can be checked directly. The first inequality is not immediate
for certain faithful 2-blocks of
$$12_1.\PSU_4(3),\ 12_2.\PSU_4(3),\ 2.\PSU_6(2),\ 6.\PSU_6(2),\ 2.\OO_8^+(2),\
  2.F_4(2),\ 2.\tw2E_6(2) \text{ and }6.\tw2E_6(2),$$
but again in these cases it suffices to verify that $k(D')\ge8$.
\end{proof}

\section{Alternating groups}   \label{sec:alt}

In \cite[Prop.~4.4 and~4.7]{MN06} we proved that all blocks of symmetric
groups $\fS_n$ satisfy our inequalities. The corresponding statement for
blocks of the alternating groups $\fA_n$ and of the 2-fold covering groups
$2.\fA_n$, which we will derive here, is not an immediate consequence of this
latter result, though. Our proofs will crucially rely on various results of
Olsson.

Recall that $p$-blocks of $\fS_n$ are parametrised by $p$-cores, and that
their block theoretic invariants only depend on their weight.
Let $\tilde B=\tilde B(w)$ be a $p$-block of $\fS_n$ of weight~$w$. Then
$k(p,w):=k(\tilde B(w))$ is the number of $p$-multipartitions of $w$.
Let $P_i$ denote the $i$-fold wreath product of the cyclic group of order $p$,
that is, $P_i=C_p\wr\cdots\wr C_p$ (with $i$ terms). Then a defect group of
$\tilde B(w)$ is a direct product
$\tilde D=\tilde D(w)=\prod_{i=0}^r P_{i+1}^{a_i}$, where
$w=\sum_{i=0}^r a_ip^i$, with $0\le a_i<p$, is the $p$-adic decomposition
of~$w$ (see \cite[Prop.~11.3]{Ol93}).
In particular, defect groups are abelian when the weight $w$ is less than $p$.

\begin{prop}   \label{prop:An p>2}
 Let $G=2.\fA_n$ with $n\ge5$, and $p>2$ an odd prime. Then all $p$-blocks of
 $G$ satisfy inequalities~$\Ca$ and~$\Cb$.
\end{prop}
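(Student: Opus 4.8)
The plan is to deduce the statement via Clifford theory with respect to the index--$2$ subgroups $\fA_n\trianglelefteq\fS_n$ and $2.\fA_n\trianglelefteq 2.\fS_n$, starting from the already known validity of $\Ca$ and $\Cb$ for $p$-blocks of $\fS_n$ \cite{MN06} together with the explicit combinatorics of the $p$-blocks of $\fS_n$, $\fA_n$, $2.\fS_n$, $2.\fA_n$, and of their invariants (Olsson \cite{Ol93}). Since $p$ is odd, $Z(G)\cong C_2$ has order prime to $p$, so each $p$-block $B$ of $G=2.\fA_n$ is either a $p$-block of $\fA_n$ (when $Z(G)$ lies in its kernel) or a faithful (``spin'') block; these two families are treated in turn. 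In both cases all $p$-elements of $\fS_n$ (resp. of $2.\fS_n$) lie in $\fA_n$ (resp. in $2.\fA_n$), so a defect group $D$ of $B$ is simultaneously a defect group of the block $\tB$ of $\fS_n$ (resp. of $2.\fS_n$) lying above $B$. If $D$ is abelian --- equivalently, if the weight (resp. bar-weight) $w$ of the block is smaller than $p$ --- then Theorem~\ref{thm:abelian} applies, so I may assume $w\ge p$, whence $D\cong\tD(w)$ is non-abelian. Writing $w=\sum_{i\ge0}a_ip^i$, one has $\tD(w)=\prod_i P_{i+1}^{a_i}$ and $\tD(w)'=\prod_i (P_{i+1}')^{a_i}$ with $P_2'\cong C_p^{p-1}$, so $k(D')\ge p^{p-1}$; moreover $\tD(w)'$ contains elementary abelian subgroups of rank growing with $w$, so $k(D')$, and likewise $k(D)$, grow (at least exponentially) with $w$. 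Together with Proposition~\ref{prop:p-group} this gives ample lower bounds for $k(D')$ and $k(D)$.

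\emph{Non-spin blocks.} Let $B$ be a $p$-block of $\fA_n$ lying under the $\fS_n$-block $\tB$, with $p$-core $\kappa$. As $p$ is odd the Clifford correspondence preserves heights, and there are two cases. If $\kappa\ne\kappa'$, then $\tB$ and $\tB\otimes\mathrm{sgn}$ are distinct blocks of $\fS_n$ both lying above $B$, restriction maps $\Irr(\tB)$ bijectively and height-preservingly onto $\Irr(B)$ (and similarly on Brauer characters), so $k(B)=k(\tB)$, $k_0(B)=k_0(\tB)$, $l(B)=l(\tB)$; since $D$ is a common defect group, $\Ca$ and $\Cb$ for $B$ follow at once from \cite{MN06}. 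If $\kappa=\kappa'$, then $\tB$ is the unique block of $\fS_n$ above $B$ and
$$ k(B)=\tfrac{1}{2}k(\tB)+\tfrac{3}{2}t,\qquad k_0(B)=\tfrac{1}{2}k_0(\tB)+\tfrac{3}{2}t_0,\qquad l(B)=\tfrac{1}{2}l(\tB)+\tfrac{3}{2}u, $$
where $t$ is the number of self-conjugate partitions labelling $\tB$, $t_0$ the number of these of height zero, and $u$ the number of Mullineux-fixed $p$-regular partitions in $\tB$. Invoking $k(\tB)\le k(D')k_0(\tB)$ and $k(\tB)\le k(D)l(\tB)$ from \cite{MN06}, inequality $\Ca$ for $B$ reduces to $t\le k(D')t_0$ and $\Cb$ to $t\le k(D)u$. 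From the combinatorial description of height-zero characters one checks that a block with self-conjugate $p$-core always contains a self-conjugate partition of height zero, so $t_0\ge1$, and $\Ca$ then follows once $t\le k(D')$; this last inequality is immediate on comparing the explicit count of self-conjugate partitions of weight $w$ with the lower bounds on $k(D')$ above, only finitely many small weights requiring direct inspection. For $\Cb$ the same argument works when $u\ge1$; in the remaining case $u=0$ one has $l(B)=\tfrac{1}{2}l(\tB)$, and it suffices that $k(D)l(\tB)-k(\tB)\ge 3t$, which again drops out of the explicit formulas.

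\emph{Spin blocks, and the main difficulty.} The faithful blocks of $2.\fA_n$ are handled in exactly the same fashion, now using Olsson's parametrisation of spin characters by bar-partitions, his description of the spin $p$-blocks of $2.\fS_n$ --- which for odd $p$ again have defect groups isomorphic to those of weight-$\bar w$ blocks of symmetric groups --- and of their invariants, the corresponding Clifford theory from $2.\fS_n$ to $2.\fA_n$, and counts of self-associate bar-partitions in place of self-conjugate partitions. The substantive point --- and where the proof genuinely needs Olsson's machinery --- is the ``splitting'' situation above and its spin analogue: one must control rather precisely the number of self-conjugate (resp. self-associate) partitions in a block and the subsets of these giving rise to height-zero, resp. modular, characters, so as to be sure that the defect group invariants $k(D')$ and $k(D)$ really do absorb the difference between $k(B)$ and $\tfrac{1}{2}k(\tB)$. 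This rests on Olsson's detailed results on heights of characters, on numbers of modular characters, and on conjugation in the blocks of the symmetric and alternating groups and of their double covers.
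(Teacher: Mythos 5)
Your overall strategy is different from the paper's and, in its generic part, more refined: the paper never splits into the cases $\kappa\ne\kappa'$ versus $\kappa=\kappa'$ and never uses the exact Clifford formulas $k(B)=\tfrac12k(\tB)+\tfrac32t$, etc. Instead it relies on the much cruder inequalities $k(B)\le k(\tB)$ (Olsson) together with the strong symmetric-group bounds $k(\tB)\le k(\tD')$ and $k(\tB)\le k(\tD)$ from \cite{MN06}, which make $\Ca$ and $\Cb$ immediate except when $p=3$ and the weight is small; those finitely many weights are then disposed of with explicit values of $k_0(\tB)=\prod_i k(3^{i+1},a_i)$ and $l(\tB)=k(2,w)$. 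Your route would, if completed, give sharper information, but as written it has genuine gaps.

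First, in the self-conjugate case your reduction of $\Ca$ to ``$t\le k(D')t_0$'' hinges on the assertion that $t_0\ge1$, i.e.\ that a block with self-conjugate $p$-core always contains a \emph{height-zero} character labelled by a self-conjugate partition. This is not obvious: it requires the tower/quotient description of height-zero characters and an analysis of how conjugation acts on it (the natural candidate fixed point is the ``central'' tower, but for digits $a_i\ge2$ one is acting on multisets and the existence of a fixed point must actually be verified). No argument is given, and if $t_0=0$ were possible your reduction collapses while $t\ge1$ always holds. Likewise the $\Cb$ case $u=0$ is reduced to $k(D)l(\tB)-k(\tB)\ge3t$, which is a strengthening of the $\fS_n$-inequality of \cite{MN06} that you assert ``drops out of the explicit formulas'' without checking it; for $p=3$ and small $w$ this is exactly the regime where Table~\ref{tab:p=3} of the paper shows the naive bounds already fail and case-by-case verification is unavoidable. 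The claims ``$t\le k(D')$'' and the treatment of the finitely many small weights are likewise left as assertions.

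Second, and more seriously, the spin blocks of $2.\fA_n$ --- which are the only genuinely new blocks beyond those of $\fA_n$, and the part of the proposition where the real work lies --- are dispatched with ``handled in exactly the same fashion'', followed by an explicit acknowledgement that the substantive points rest on Olsson's machinery. Nothing is actually proved there: you do not state the spin analogue of your splitting formulas, do not justify that the relevant comparison block of $2.\fS_m$ (or of $\fS_m$) has the same invariants and defect groups, and do not address the fact that for $p=3$ one has $k(\hat B)=\tfrac32\sum_i q(i)p(w-i)$, which forces a separate quantitative analysis with its own list of exceptional weights ($w\le8$ in the paper's Table~\ref{tab:p=3}). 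As it stands the proposal is a plausible plan for the non-spin case with unverified combinatorial inputs, and an outline, not a proof, for the spin case.
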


\begin{proof}
We start with some observations on $p$-blocks of $\fS_n$. Let $\tilde B$ be
a $p$-block of $\fS_n$, of weight $w$, and let $D\le \fS_n$ be a defect
group of $\tilde B$. According to \cite[Prop.~4.7]{MN06} for $p\ge5$ we have
$k(\tilde B)\le k(D')$, and in fact the given bound shows that also
$k(\tilde B)\le k(D)$. Now assume that $p=3$. Here we still have
$k(\tilde B)=k(p,w)\le k(D')$ whenever $w\ge 18$ and $k(p,w)\le k(D)$ when
$w\ge4$. The cases in which these inequalities are not satisfied are collected
in Table~\ref{tab:p=3}. An entry ``--'' signifies that the quotient is at
most~1 and hence our inequality holds.

\begin{table}[htb]
\caption{3-blocks of small weight}   \label{tab:p=3}
$\begin{array}{r|cccccccccccc}
          w& 3& 4& 5& 6& 7& 8& 9& 10& 11& 13& 14& 17\\
\noalign{\hrule}
 k(\tilde B)/k(D')\le& 3& 6& 12& 3& 6& 10& 2& 3& 5& 2& 3& 2\\
        k_0(\tilde B)& 9& 27& 81& 54& 162& 486& 27& 81& 243& 648& 2187& 13122\\
      k(\hat B)/k(D')& 2& 3& 4& -& 2& 2& -& -& -& -& -& - \\
\noalign{\hrule}
  k(\tilde B)/k(D)\le& 2& -& -& -& -& -& -& -& -& -& -& -\\
\noalign{\hrule}
  \end{array}$
\end{table}

Now let $B$ be a $p$-block of $\fA_n$, and $\tilde B$ a $p$-block of $\fS_n$
covering $B$. As $p$ is odd, any defect group $D$ of $B$ is also a defect group
of $\tilde B$. Now by \cite[Prop.~4.10]{Ol90} we have $k(B)\le k(\tilde B)$, so
by what we showed above
$$k(B)/k_0(B)\le k(B)\le k(\tilde B)\le k(D')\quad\text{and}\quad
  k(B)/l(B)\le k(B)\le k(\tilde B)\le k(D),$$
whence $\Ca$ and~$\Cb$ hold for $B$, unless $p=3$ and
$w\le17$. Here, we certainly always have $k_0(B)\ge k_0(\tilde B)/2$, and
$k_0(\tilde B)=\prod_{i=0}^r k(3^{i+1},a_i)$ by \cite[Prop.~12.4]{Ol93}.
The relevant values are given in Table~\ref{tab:p=3} from which $\Ca$ follows.
Similarly we have $l(B)\ge l(\tilde B)/2$, and $l(\tilde B)=k(2,w)$ by
\cite[Prop.~12.8]{Ol93} which is greater than~2 for $w=3$, so $\Cb$ is also
satisfied.
\par
Next, let $\hat B$ be a faithful $p$-block of $2.\fA_n$. As pointed out in
\cite[Rem.~13.18 and Prop.~3.19]{Ol93} for any spin block of $2.\fA_n$ there
is some $m\ge1$ and a spin block of $2.\fS_m$ having the same invariants, so we
may assume that $\hat B$ is in fact a faithful block of $2.\fS_n$. Let
$\tilde B$ denote a block of some symmetric group $\fS_m$ with the same weight
as $\hat B$ and (hence) isomorphic defect groups. According to
\cite[Prop.~13.14]{Ol93} we again have $k(\hat B)\le k(\tilde B)$. We can thus
argue
as before unless $p=3$ and $w\le17$. When $p=3$ by \cite[Cor.~13.6]{Ol93} we
have that $k(\hat B)=\frac{3}{2}\sum_{i=0}^wq(i)p(w-i)$, where $q(i)$ is the
number of strict partitions of $i$ (see \cite[Prop.~9.6(i)]{Ol93}). Then
$k(\hat B)/k(D')\le1$ whenever $w\ge9$, and the exceptions are again listed in
Table~\ref{tab:p=3}. It is straightforward to check that the remaining five
weights do not lead to a counterexample to~$\Ca$. By
\cite[Prop.~13.17]{Ol93}, for all $w$ we have $l(\hat B)\ge k(1,w)$, the number
of partitions of $w$. Visibly this is larger than $k(\tilde B)/k(D)$ for $w\ge3$
thus showing inequality~$\Cb$.
\end{proof}

\begin{rem}
The proof shows that $p$-blocks $B$ of covering groups of alternating groups
for $p\ge3$ always satisfy the strengthened form $k(B)\le k(D)$ of Brauer's
$k(B)$-conjecture, which trivially implies at least inequality~$\Cb$, unless
$p=w=3$.
\end{rem}

\begin{prop}   \label{prop:An p=2}
 All $2$-blocks of $\fA_n$ with $n\ge5$ satisfy inequalities~$\Ca$ and~$\Cb$.
\end{prop}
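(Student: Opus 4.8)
The plan is to mimic the strategy used for the odd‑prime case in Proposition~\ref{prop:An p>2}, transferring information from $2$‑blocks of $\fS_n$ to $2$‑blocks of $\fA_n$, but the transfer is more delicate in characteristic~$2$ since a block $\tilde B$ of $\fS_n$ need no longer have the same defect group as the block(s) of $\fA_n$ it covers: if $\tilde B$ covers two blocks of $\fA_n$ then $\fA_n$ contains a defect group of $\tilde B$ (same defect), while if $\tilde B$ covers a single block $B$ of $\fA_n$ with $\fS_n$‑stable defect group, $|D_{\fA}|=|D_{\fS}|/2$ is possible. So first I would recall the precise relationship between $2$‑blocks of $\fS_n$ and $\fA_n$ from Olsson's work (the weight and $2$‑core data, and which blocks split), together with the description $\tilde D(w)=\prod_i P_{i+1}^{a_i}$ of defect groups of weight‑$w$ blocks of $\fS_n$ and the formula $k(2,w)$ for $k(\tilde B(w))$.

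Next I would dispose of the bulk of the cases by a counting estimate. For a weight‑$w$ block $\tilde B$ of $\fS_n$ with defect group $D=\tilde D(w)$ one has $k(\tilde B)=k(2,w)$, the number of $2$‑multipartitions of $w$, while $k(D')$ and $k(D)$ grow much faster; I expect that for all $w$ beyond a small explicit bound one already has $k(2,w)\le k(D')$ (and a fortiori $k(2,w)\le k(D)$), just as in the $p\ge5$ case, so that combining with $k_0(B)\ge k_0(\tilde B)/2$, $l(B)\ge l(\tilde B)/2$ and the inequalities $k(B)\le k(\tilde B)$ (or the analogous comparison when the defect drops) yields $\Ca$ and~$\Cb$ immediately. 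The comparison $k_0(\tilde B)=\prod_i k_0(\tilde B(a_i\cdot p^i))$‑type product formula and $l(\tilde B)=p(w)$ for $p=2$ should again be quoted from Olsson. This leaves finitely many small weights $w$ (and hence, after fixing the $2$‑core, only boundedly many genuinely different blocks up to the invariants we care about), which I would collect in a table analogous to Table~\ref{tab:p=3}, listing $k(\tilde B)$, $k_0(\tilde B)$, $l(\tilde B)$ and the relevant $k(D')$, $k(D)$, and check the two inequalities for $\fA_n$ directly in each case, using that $k_0(B)$ and $l(B)$ lose at most a factor~$2$.

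The main obstacle I foresee is precisely the low‑weight cases where $D$ is abelian or nearly so: when $w<p=2$, i.e.\ $w\le1$, the defect group is cyclic or trivial and $k(D')=1$, so $\Ca$ forces $k(B)=k_0(B)$ — here one must invoke Brauer's height zero conjecture for these blocks (known for $2$‑blocks of symmetric/alternating groups, or covered by Theorem~\ref{thm:abelian} when the defect group is abelian) rather than a crude count. Similarly, for the smallest nonabelian defect groups (weight $2$ or $3$, where $D$ involves a copy of $P_2=C_2\wr C_2$ of order~$8$, a dihedral group), one should be able to fall back on Sambale's results quoted in Proposition~\ref{prop:Sambale}, since such defect groups are metacyclic or central products of metacyclic with cyclic groups; this handles $\Ca$ and $\Cb$ uniformly and removes the cases that a bare counting argument would miss. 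The remaining genuinely exceptional small $n$ (a handful of degrees, say $n\le 8$ or so, where the general block‑theoretic reductions do not yet apply or where $\fA_n$ has exceptional behaviour) I would finish by direct inspection of the known character tables, exactly as in Propositions~\ref{prop:spor} and~\ref{prop:exc cover}.
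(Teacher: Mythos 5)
Your proposal follows essentially the same route as the paper: prove $k(\tilde B)=k(2,w)\le k(\tilde D)$ and a companion bound of $k(2,w)$ against $k(\tilde D')$ for all but finitely many weights, transfer to $\fA_n$ via Olsson's comparisons $k(B)\le k(\tilde B)$, $k_0(B)\ge k_0(\tilde B)/2$, $l(B)\ge l(\tilde B)/2$, and settle the remaining small weights by explicit computation. Two minor differences are worth noting: the paper only obtains the weaker estimate $k(\tilde B)\le 4\,k(\tilde D')$ (for $w\ne3,7$), not the clean $k(2,w)\le k(\tilde D')$ you hope for, and therefore has to split into the cases of odd weight (where Olsson gives $k(B)=k(\tilde B)/2$, $k_0(B)=k_0(\tilde B)/2$ and $D'=\tilde D'$ exactly) and even weight (where it uses $l(B)\ge2$ and $k_0(B)\ge8$) to absorb the surplus factors relating $\tilde D,\tilde D'$ to the defect group of the $\fA_n$-block --- precisely the bookkeeping your sketch leaves implicit; and your appeal to Proposition~\ref{prop:Sambale} for weight~$3$ (where the $\fA_n$-defect group is indeed dihedral of order~$8$, hence metacyclic) is a legitimate shortcut that the paper replaces by a direct check of $w=3,7$.
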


\begin{proof}
We first consider a block $\tilde B$ of $\fS_n$ of weight $w\ge3$ and with
defect group $\tilde D$. Then by \cite[Prop.~11.4]{Ol93} we have
$k(\tilde B)=k(2,w)$, while $k(\tilde D)=\prod_{i=0}^r k(\tilde D(2^i))^{a_i}$,
where $w=\sum_i a_i 2^i$ is the 2-adic decomposition of $w$. Then using the
estimates in \cite[Lemma~4.3]{MN06} when $w$ is large, and explicit values for
small $w$, it is readily seen that
$$k(\tilde B)\le k(\tilde D)\qquad\text{ for all $w$}.$$
Furthermore, again with \cite[Lemma~4.3]{MN06} we get that
$$k(\tilde B)\le 4\,k(\tilde D')\qquad\text{ when $w\ne3,7$}.$$
In particular, $\Cb$ is satisfied for all 2-blocks of $\fS_n$, and~$\Ca$ holds at
least when $w\ne3,7$ as then $k_0(\tilde B)\ge4$. For $w=3,7$, $\Ca$ follows by
using the explicit values of $k_0(\tilde B)$ (see also
\cite[Prop.~4.4 and~4.7]{MN06}).
\par
Now let $B$ be a 2-block of $\fA_n$, and $\tilde B$ the 2-block of $\fS_n$
covering $B$. Let $D$ be a defect group of $B$ and $\tilde D\ge D$ a defect
group of $\tilde B$. We may assume that $\tilde B$ has weight at least~3, as
otherwise $|\tilde D|\le8$ and hence $D=\tilde D\cap\fA_n$ is abelian.
According to \cite[Prop.~4.13]{Ol90} we always have $k(B)\le k(\tilde B)$.
\par
If $w$ is odd, then in fact $k(B)=k(\tilde B)/2$ by \cite[Prop.~4.5]{Ol90}. As
$|\tilde D:D|=2$ this implies inequality~$\Cb$, from the fact shown
above that $k(\tilde B)\le k(\tilde D)$. For inequality~$\Ca$, by
\cite[Prop.~4.6]{Ol90} we have
$k_0(B)=k_0(\tilde B)/2$, while $D'=\tilde D'$ by \cite[Prop.~4.15]{Ol90}, so
$$k(B)/k_0(B)=k(\tilde B)/k_0(\tilde B)\le k(\tilde D')=k(D')$$
by the result for the block $\tilde B$ of $\fS_n$.
\par
Now assume that $w$ is even. Then $l(B)\ge 2$ whenever $D$ is nonabelian,
which gives
$$k(B)/l(B)\le k(\tilde B)/2\le k(\tilde D)/2\le |\tilde D:D|k(D)/2=k(D),$$
whence~$\Cb$. As for~$\Ca$, we still have
$$k(B)\le k(\tilde B)\le 4k(\tilde D')\le 8k(D')\le k_0(B)k(D')$$
for $w\ne3,7$, as $k_0(B)\ge8$ for $w\ge4$. It remains to check the two cases
$w=3,7$, which is straightforward.
\end{proof}

\begin{prop}   \label{prop:2.An p=2}
 All $2$-blocks of $2.\fA_n$ with $n\ge5$ satisfy inequalities~$\Ca$ and~$\Cb$.
\end{prop}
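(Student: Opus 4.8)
The plan is to reduce the statement to facts about $2.\fS_n$ and $\fA_n$ at the prime $2$, in the spirit of the two preceding proofs. If a defect group $D$ of $B$ is abelian we are done by Theorem~\ref{thm:abelian}, so we may assume $D$ non-abelian; since $G$ is quasi-simple with $Z:=Z(G)\cong C_2=O_2(G)$, every defect group contains $Z$, so $|D|\ge8$. For the finitely many $2.\fA_n$ with $n$ small one argues directly from the known character tables together with Proposition~\ref{prop:p-group} (for $n=6,7$ the group at issue is the genuine double cover, the exceptional $3$- and $6$-fold covers having been dealt with in Proposition~\ref{prop:exc cover}), so the point is to treat $n$ large.

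Since the augmentation ideal of $\bk Z$ is nilpotent, the $2$-blocks of $\bk G$ correspond canonically to those of $\bk[G/Z]=\bk\fA_n$; write $\bar B$ for the block of $\fA_n$ corresponding to $B$, so that $\bar D:=D/Z$ is a defect group of $\bar B$. In characteristic $2$ the central involution acts trivially on every simple $\bk G$-module, whence $l(B)=l(\bar B)$; and inflation changes neither character degrees nor the $2$-adic valuations involved, so each $\chi\in\Irr(\bar B)$, regarded in $\Irr(B)$, has the same height in $B$ as in $\bar B$. Hence
$$\Irr(B)=\Irr(\bar B)\ \sqcup\ S,\qquad k_0(B)\ge k_0(\bar B),\qquad k(D)\ge k(\bar D),\qquad k(D')\ge k(\bar D'),$$
where $S$ is the set of faithful (spin) characters of $G$ in $B$. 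By Proposition~\ref{prop:An p=2} the block $\bar B$ satisfies $\Ca$ and~$\Cb$, and its proof moreover gives $k(\bar B)\le k(\tilde D(w))$ and $k(\bar B)\le4\,k(\tilde D(w)')$ for $w\ne3,7$, where $\tilde D(w)$ is the defect group of the weight-$w$ block of $\fS_n$ lying above $\bar B$ (a group of the same order as $D$).

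The decisive step is to bound $|S|$, and the number of its height-zero members, by block invariants. Passing from $2.\fA_n$ to $2.\fS_n$ as in~\cite[Sect.~13]{Ol93} --- the centre of $2.\fS_n$ being a $2$-group, its $2$-blocks likewise match those of $\fS_n$ --- this reduces to an explicit count of faithful irreducible characters in the spin block attached to the weight $w$ (with relevant $2$-bar-core a triangular partition), for which \cite{Ol93} supplies formulas as well as a description of $D$. Since $k(D)$ and $k_0(B)$ grow much faster in $w$ than does the number of spin characters, feeding these data into the crude bounds recalled above should yield, for all $w$ beyond an effective bound,
$$k(B)=k(\bar B)+|S|\le l(B)\,k(D)\quad\text{and}\quad k(B)=k(\bar B)+|S|\le k_0(B)\,k(D'),$$
i.e.\ $\Cb$ and~$\Ca$; the bounded range of remaining (small) weights $w$ is then cleared by substituting the exact values of $k(\bar B)$, $k_0(B)$, $|S|$ and $k(D')$.

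I expect the genuine obstacle to be, as in Propositions~\ref{prop:An p>2} and~\ref{prop:An p=2}, the first inequality at small weight, where it is nearly sharp: already for $G=2.\fA_5=\SL_2(5)$ the principal $2$-block has $k(B)=7$, $k_0(B)=4$ and $D\cong Q_8$ with $k(D')=k(C_2)=2$, so $k(B)/k_0(B)=7/4$ only just stays below $k(D')=2$. Crude estimates for the number of spin characters will therefore not do in the small cases, and one really needs Olsson's precise enumeration of the faithful characters in the block, together with their heights; assembling this and disposing of the finitely many exceptional weights is where the work lies.
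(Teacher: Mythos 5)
Your reduction to the block $\bar B$ of $\fA_n$, the identities $l(B)=l(\bar B)$ and $k_0(B)\ge k_0(\bar B)$, and the decomposition $\Irr(B)=\Irr(\bar B)\sqcup S$ are all correct, and you rightly identify that everything hinges on bounding the set $S$ of spin characters in the block. But that bound is precisely the step you do not carry out, and the source you defer to does not provide it: the spin-block machinery of \cite[Sect.~13]{Ol93} --- bar cores and quotients, the transfer from $2.\fA_n$ to $2.\fS_n$ in Rem.~13.18/Prop.~13.19, and the character counts of Cor.~13.6 --- is developed for \emph{odd} primes only, which is why the paper uses it in Proposition~\ref{prop:An p>2} but not here. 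At $p=2$ the central involution is a $2$-element, so faithful and non-faithful characters mix in every block and the distribution of spin characters into $2$-blocks is not governed by that combinatorics (it was only worked out later, by Bessenrodt--Olsson). Thus the ``explicit count of faithful characters in the spin block of weight $w$'' that your asymptotic comparison rests on is not available from \cite{Ol93}, and the claim that $|S|$ is eventually dominated by $l(B)\,k(D)$ and $k_0(B)\,k(D')$ remains an unproven assertion. Since this is the entire content of the proposition beyond what Proposition~\ref{prop:An p=2} already gives, the argument has a genuine gap.

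The paper closes exactly this gap with one general input you are missing: by \cite[Thm.~C]{Na17}, the block $\hat B$ of $2.\fA_n$ dominating the block $B$ of $\fA_n$ satisfies $k(\hat B)\le 2k(B)$, i.e.\ $|S|\le k(\bar B)$ in your notation --- no enumeration of spin characters is needed. Feeding this into the bounds $k(\tilde B)\le k(\tilde D)$ and $k(\tilde B)\le 4k(\tilde D')$ (for $w\ne3,7$) established in the proof of Proposition~\ref{prop:An p=2}, together with $k(\hat D)\ge k(D)\ge k(\tilde D)/2$ and $k(\hat D')\ge k(D')\ge k(\tilde D')/2$, yields $k(\hat B)\le 4k(\hat D)\le l(\hat B)k(\hat D)$ for $w\ge4$ and $k(\hat B)\le 16k(\hat D')\le k_0(\hat B)k(\hat D')$ for $w\ge8$, after which only finitely many small weights remain to be checked directly (your $2.\fA_5$ computation is one of them, and correct). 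If you wish to keep your route, you must replace the appeal to \cite{Ol93} by an actual count of spin characters in $2$-blocks of $2.\fS_n$, or import a covering-group bound such as Navarro's.
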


\begin{proof}
Let $\hat B$ be a 2-block of $G=2.\fA_n$ and $B$ the block
of $\fA_n$ contained in $\hat B$. If $\hat D$ is a defect group of $\hat B$,
then $D=\hat D/Z$ is a defect group of $B$, where $Z=Z(G)$. In particular,
$k(\hat D)\ge k(D)$ and $k(\hat D')\ge k(D')$. Let $\tilde B$ denote the
block of $\fS_n$ covering $B$. By \cite[Thm.~C]{Na17} we have
$k(\hat B)\le2k(B)$. So using the results for blocks of $\fS_n$ shown in the
proof of Proposition~\ref{prop:An p=2} we get
$$k(\hat B)\le 2k(B)\le 2k(\tilde B)\le 2k(\tilde D)\le 4k(D)\le 4k(\hat D)
  \le l(\hat B)k(\hat D)$$
as $l(\hat B)=l(B)\ge4$ for $w\ge 4$. Thus we obtain~$\Cb$. For $w=3$ we have
$k(\hat B)=9$, $l(\hat B)=3$ and $k(\hat D)\ge4$. Also, again using the result
for $\fS_n$,
$$k(\hat B)\le 2k(B)\le 2k(\tilde B)\le 8k(\tilde D')\le 16 k(D')
  \le 16k(\hat D')$$
for $w\ne 3,7$. As $k_0(\hat B)\ge16$ for $w\ge6$, this proves~$\Ca$ when
$w\ge 8$. The cases of small $w$ can again be checked individually.
\end{proof}

Arguing along the same lines it is straightforward to show that the 2-blocks of
$2.\fS_n$ also satisfy~$\Ca$ and~$\Cb$.

\begin{thm}   \label{thm:alt}
 Let $G$ be a covering group of $\fA_n$, $n\ge5$. Then all $p$-blocks of $G$
 satisfy inequalities~$\Ca$ and~$\Cb$ for all primes $p$.
\end{thm}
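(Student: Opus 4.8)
The plan is to assemble Theorem~\ref{thm:alt} from the case analyses already carried out in this section, since for $n\ge5$ the covering groups of $\fA_n$ form a very short list. First I would recall that the Schur multiplier of $\fA_n$ is cyclic of order~$2$ for $n=5$ and $n\ge8$, and cyclic of order~$6$ for $n\in\{6,7\}$; hence every covering group of $\fA_n$ is one of $\fA_n$ itself, the double cover $2.\fA_n$, or---only when $n\in\{6,7\}$---one of the exceptional covers $3.\fA_n$ and $6.\fA_n$. So it suffices to treat these families separately.

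Next I would dispose of the exceptional covers. The groups $3.\fA_6$ and $6.\fA_6$ are exceptional covering groups of the simple group $\fA_6\cong\PSL_2(9)$ of Lie type, while $3.\fA_7$ and $6.\fA_7$ are exceptional covers of $\fA_7$; thus all four are handled directly by Proposition~\ref{prop:exc cover}, whose proof rests on the explicitly known ordinary character tables together with the class-number bound of Proposition~\ref{prop:p-group} (in the few faithful $2$-blocks there listed one only needs $k(D')\ge8$).

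For $G=\fA_n$ I would split on the prime: for odd $p$ the inequalities were already established inside the proof of Proposition~\ref{prop:An p>2}, where a block $B$ of $\fA_n$ is compared with the covering block $\tilde B$ of $\fS_n$ via Olsson's estimates $k(B)\le k(\tilde B)$, $k_0(B)\ge k_0(\tilde B)/2$ and $l(B)\ge l(\tilde B)/2$ (with the small-weight exceptions for $p=3$ collected in Table~\ref{tab:p=3}), and for $p=2$ the statement is exactly Proposition~\ref{prop:An p=2}. Similarly, for $G=2.\fA_n$ the odd primes are covered by Proposition~\ref{prop:An p>2} and the prime $2$ by Proposition~\ref{prop:2.An p=2}.

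Since each ingredient is already in place, there is no genuine obstacle here; the theorem is essentially a collation. The only points needing a moment's care are the bookkeeping of the exceptional Schur multipliers at $n=6,7$ together with the identification $\fA_6\cong\PSL_2(9)$ that brings $3.\fA_6$, $6.\fA_6$ under Proposition~\ref{prop:exc cover}, and making sure that the small-weight cases flagged along the way (the entries of Table~\ref{tab:p=3}, and $w=3,7$ at $p=2$) have indeed all been checked in the cited propositions, so that nothing slips through when the weight of the covering $\fS_n$-block is small.
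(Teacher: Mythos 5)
Your proposal is correct and follows essentially the same route as the paper: the theorem is indeed just a collation of Proposition~\ref{prop:exc cover} (for the exceptional covers at $n=6,7$, via $\fA_6\cong\PSL_2(9)$), Propositions~\ref{prop:An p>2} and~\ref{prop:An p=2} for $\fA_n$, and Propositions~\ref{prop:An p>2} and~\ref{prop:2.An p=2} for $2.\fA_n$. Your explicit bookkeeping of the Schur multipliers is a slightly more careful write-up of exactly the argument the paper gives.
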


\begin{proof}
The blocks of the exceptional covering groups of $\fA_6\cong\PSL_2(9)$ and
$\fA_7$ have been
considered in Proposition~\ref{prop:exc cover} while the blocks of $\fA_n$
have been dealt with in Propositions~\ref{prop:An p>2} and~\ref{prop:An p=2}.
Finally, the faithful blocks of $2.\fA_n$ for odd primes were again handled in
Proposition~\ref{prop:An p>2} and the 2-blocks in
Proposition~\ref{prop:2.An p=2}. This completes the proof.
\end{proof}

\section{Groups of Lie type in their defining characteristic} \label{sec:def char}
In this section we verify the inequalities~$\Ca$ and~$\Cb$ for blocks of
quasi-simple groups of Lie type in their defining characteristic. Partial
results had already been obtained in \cite[Prop.~3.2]{MN06}. In particular,
both inequalities were shown to hold for groups obtained from simple algebraic
groups of adjoint type. Thus, neither the $p$-blocks of Suzuki and Ree groups
nor those of groups of type $G_2$, $\tw3D_4$, $F_4$ or $E_8$ do yield
counterexamples. We can hence discard them from our discussion here.

Let $\bG$ be a simple algebraic group of simply connected type over an
algebraic closure of the finite field $\FF_p$ and $F:\bG\rightarrow\bG$ a
Frobenius endomorphism with respect to an $\FF_q$-rational structure, where
$q=p^f$. Let
$G=\bG^F$ be the finite group of fixed points. Let $\bT\le\bB\le\bG$ be an
$F$-stable maximal torus in an $F$-stable Borel subgroup of $\bG$, and
$\bU=R_u(\bB)$ the unipotent radical of $\bB$. Let $\Phi$ be the root system
of $\bG$ with respect to $\bT$ and $\Phi^+\subset\Phi$ the positive system
defined by $\bB$, with base $\Delta\subset\Phi^+$. We write $r:=|\Delta|$ for
the rank of the algebraic group $\bG$. For $\al\in\Phi^+$ let $\bU_\al\le\bU$
denote the corresponding root subgroup.
Set $U:=\bU^F$, a Sylow $p$-subgroup of $G$.

\begin{lem}   \label{lem:k(U)}
 We have $k(U)\ge q^{|\Delta|}$ and $k(U')\ge q^{2|\Delta|-3}$.
\end{lem}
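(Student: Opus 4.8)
The plan is to bound the number of conjugacy classes of $U=\bU^F$ and of its derived subgroup $U'$ from below by counting certain central elements or, more efficiently, by exhibiting a large abelian quotient on which $U$ acts trivially. For the first inequality, I would use the fact that $\bU/[\bU,\bU]\cong\prod_{\al\in\Delta}\bU_\al$ as algebraic groups (the quotient by the derived subgroup picks out exactly the simple root subgroups), so after taking $F$-fixed points $U/U'$ has order at least $q^{|\Delta|}$ (exactly $q^{|\Delta|}$ when the simple roots fall into $F$-orbits of size one; in the twisted cases several simple roots may be fused but the product of the corresponding root subgroups over an orbit still contributes a factor of the form $q^{d}$ with $d$ the orbit size, so the total is still $q^{|\Delta|}$). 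Since conjugacy classes of $U$ surject onto elements of the abelian quotient $U/U'$, we get $k(U)\ge |U/U'|\ge q^{|\Delta|}=q^r$.

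For the second inequality I would look for a large abelian quotient of $U'$. Here the key structural input is that $[\bU,\bU]$ is directed by the root subgroups $\bU_\al$ with $\al\in\Phi^+\setminus\Delta$ (the non-simple positive roots), and that $\bU'/[\bU',\bU']$ is again a product of root subgroups $\bU_\al$ where $\al$ ranges over those positive roots that are not sums of two other positive roots lying in $\bU'$ — concretely, the roots of height $2$ survive to the abelianization of $\bU'$. Counting: the number of positive roots of height exactly $2$ in an irreducible root system of rank $r$ is $r-1$, and those of height $1$ (the simple roots, which lie outside $\bU'$) number $r$; so $|\Phi^+\setminus\Delta|\ge $ something, but what I really want is a lower bound of the form $q^{2r-3}$ on $|U'/U''|$ or more crudely on $|U'|$ divided by a controlled factor. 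A cleaner route: $|U|=q^{|\Phi^+|}$ and $|U/U'|=q^r$ give $|U'|=q^{|\Phi^+|-r}$, and $|\Phi^+|-r\ge ?$; but in rank $1$ (type $A_1$) $U'=1$ and the claimed bound $q^{2\cdot1-3}=q^{-1}\le 1$ holds vacuously, in rank $2$ one checks $U'$ small cases directly, so the real content is $r\ge 3$. For $r\ge 3$ one has $|\Phi^+|\ge 2r-1$ is false in general (it fails already for $A_r$ where $|\Phi^+|=r(r+1)/2$ which is much bigger), so crude order bounds are more than enough once $r\ge 3$; the delicate point is getting a bound on $k(U')$, not $|U'|$.

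So the actual argument for the second bound should go: exhibit an abelian quotient of $U'$ of order at least $q^{2r-3}$. I would take the quotient of $U'$ by the subgroup generated by all root subgroups $\bU_\al^F$ with $\operatorname{ht}(\al)\ge 3$ together with $[\bU',\bU']$; the resulting quotient is the product of $\bU_\al^F$ over roots $\al$ of height exactly $2$, of which there are $r-1$ in the simply-laced case (and at least $r-1$ always, counting appropriately in twisted and multiply-laced types), giving order $\ge q^{r-1}$ — but that is not yet $q^{2r-3}$. To reach $2r-3$ I instead use that $U'$ itself has order $q^{|\Phi^+|-r'}$ where $r'$ is the rank of the reductive quotient relevant after twisting, and then invoke a general lower bound for $k(P)$ in terms of $|P|$, e.g. $k(P)\ge\log_p|P|$ type estimates or Proposition~\ref{prop:p-group}; checking that $|\Phi^+|-r\ge 2r-3$, i.e. $|\Phi^+|\ge 3r-3$, which holds for all irreducible $\Phi$ of rank $r\ge 2$ except $A_2$ (where $|\Phi^+|=3=3r-3$ with equality) — so this is exactly tight in small cases and comfortably true otherwise. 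The main obstacle I anticipate is the bookkeeping in the \emph{twisted} groups $\tw2A_n$, $\tw2D_n$, $\tw2E_6$: there $U=\bU^F$ is obtained from a twisted Frobenius, the root subgroups over an $F$-orbit of roots of size $2$ contribute $q^2$ (and type $\tw2A$ contributes an $\FF_{q^2}$-line), so one must verify that after all fusions the exponents still come out $\ge r$ and $\ge 2r-3$ respectively; a short case inspection of the possible orbit structures on $\Delta$ and on the height-$\le 2$ roots handles this, and the Suzuki/Ree and $G_2,\tw3D_4,F_4,E_8$ cases have already been excluded in the text so they need not be treated.
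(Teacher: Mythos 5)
Your argument for the first inequality is essentially the paper's: pass to the abelian quotient $U/U_1$ where $\bU_1=\langle\bU_\al\mid\al\in\Phi^+\setminus\Delta\rangle$, which has order $q^{|\Delta|}$ even in the twisted cases, and use $k(U)\ge k(U/U_1)=|U/U_1|$. That part is fine.

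For the second inequality there is a genuine gap, and you in fact identify it yourself before stepping around it. You correctly observe that only the height-$2$ root subgroups obviously survive in the abelianization you construct, giving $q^{r-1}$, which is short of $q^{2r-3}$; you then fall back on the order of $U'$ (namely $q^{|\Phi^+|-r}$) combined with ``a general lower bound for $k(P)$ in terms of $|P|$, e.g.\ Proposition~\ref{prop:p-group}.'' This cannot work: that proposition (and any bound valid for all $p$-groups, such as groups of maximal class) gives only $k(P)\ge p^2+(n-2)(p-1)$ for $|P|=p^n$, which is \emph{linear} in $\log_p|P|$, whereas you need a bound that is exponential, of size $q^{2r-3}$. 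Knowing $|U'|\ge q^{2r-3}$ tells you essentially nothing about $k(U')$. The missing idea is that the relevant abelian quotient is larger than you think: setting $\bU_2=\langle\bU_{\al+\bt}\mid\al,\bt\in\Phi^+\setminus\Delta\rangle$, the quotient $\bU_1/\bU_2$ contains the root subgroups for all roots of height $2$ \emph{and} height $3$, since a height-$3$ root $\ga=\al+\bt$ forces one of $\al,\bt$ to be simple. An indecomposable rank-$r$ system has at least $(r-1)+(r-2)=2r-3$ such roots, whence $k(U')\ge k(U'/U'')\ge|(\bU_1/\bU_2)^F|\ge q^{2r-3}$ whenever $U'=U_1$. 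You also omit the exceptional cases where $U'$ is strictly smaller than $U_1$: by Howlett's result this happens only for $B_n(2)$, $F_4(2)$, $G_2(2)$ and $G_2(3)$, which must be handled separately (for $B_n(2)$ the index is $2$ and the count of height-$\le3$ roots is $2r-1$, so the argument still goes through; the others are finite checks).
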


\begin{proof}
According to Chevalley's commutator formula (see e.g.~\cite[Thm.~11.8]{MT}),
$\bU'$ is contained in the subgroup
$\bU_1:=\langle\bU_\al\mid \al\in\Phi^+\setminus\Delta\rangle$,
and $\bU/\bU_1\cong\prod_{\al\in\Delta}\bU_\al$. Clearly $\bU_1$ is also
$F$-stable, and we set $U_1=\bU_1^F$. But then
$$U/U_1=\bU^F/\bU_1^F\cong(\bU/\bU_1)^F\cong(\prod_{\al\in\Delta}\bU_\al)^F$$
has order $q^{|\Delta|}$, see \cite[Cor.~23.9]{MT}. Now clearly
$k(U)\ge k(U/U')\ge k(U/U_1)$, which proves the first claim.  \par
For the second claim note that again by the commutator formula $U''$ is
contained in the subgroup
$$\bU_2:=\langle \bU_{\al+\beta}\mid \al,\beta\in\Phi^+\setminus\Delta\rangle$$
(where we let $\bU_{\al+\beta}:=1$ if $\al+\bt\notin\Phi$). Now
$\bU_1/\bU_2\cong\prod_\ga\bU_\ga$ where $\gamma$ runs over
$$\{\ga\in\Phi^+\setminus\Delta\mid \ga\ne\al+\bt\text{ with }
  \al,\bt\in\Phi^+\setminus\Delta\},$$
that is, $\bU_1/\bU_2$ contains a subgroup isomorphic to the product of the
root subgroups for roots which are the sum of two or three simple roots.
By assumption $\bG$ is simple, so its root system $\Phi$ is indecomposable.
It is easily seen that any indecomposable root system of rank~$r$ has at least
$2r-3$ such roots. Now first assume that $U'=U_1$. Then the preceding argument
shows that $|U'/U''|\ge|\bU_1^F/\bU_2^F|=|(\bU_1/\bU_2)^F|\ge q^{2|\Delta|-3}$.
\par
Finally consider the case that $U'<U_1$. Then by \cite[Lemma~7]{H73} we have
$G$ is of type $B_n(2)$, $F_4(2)$, $G_2(2)$ or $G_2(3)$. In the first case,
$U'$ has index~2 in $U_1$ by \cite[Prop.~3]{C11}, but in this case
$|(\bU_1/\bU_2)^F|\ge q^{2|\Delta|-1}$, so we conclude as before. The other
three cases can be checked by direct computation.
\end{proof}

An asymptotic version of the following result for principal blocks had
already been obtained in \cite[Thm.~3.1]{MN06}:

\begin{thm}   \label{thm:def char}
 Let $G=\bG^F$ be quasi-simple of Lie type in characteristic~$p$. Then the
 $p$-blocks of $G$ satisfy inequalities~$\Ca$ and~$\Cb$.
\end{thm}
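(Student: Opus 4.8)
The plan is to reduce the general quasi-simple case to the Sylow $p$-subgroup estimates of Lemma~\ref{lem:k(U)}. Let $B$ be a $p$-block of $G=\bG^F$ with defect group $D$. By the remarks preceding the theorem we may assume $\bG$ is simply connected and not of one of the types (Suzuki, Ree, $G_2$, $\tw3D_4$, $F_4$, $E_8$) that arise from adjoint groups, where the inequalities are already known from \cite[Prop.~3.2]{MN06}. In defining characteristic the block theory of $G$ is very restricted: by the work of Humphreys and others there are essentially only two possibilities, namely $B$ is the principal block with full defect, so $D=U$ a Sylow $p$-subgroup, or $B$ has central defect group $D\le Z(G)$ (the blocks of defect zero, respectively those lying over a nontrivial central $p$-character). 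So the first step is to dispose of the central-defect case: there $D$ is abelian, $k(D')=1$, and $k_0(B)=k(B)$ while $l(B)\ge1$, so both $\Ca$ and $\Cb$ hold — or one simply invokes Theorem~\ref{thm:abelian}.

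The main case is thus the principal block, $D=U$. Here I would use the two standard facts: $k(B)=k_0(B)$ fails in general, but we always have the crude bound $k(B)\le k(G)$, and more importantly $l(B)$ equals the number of irreducible $p$-modular characters of $G$ lying in the principal block, which by Steinberg's tensor product theorem and the classification of irreducible modules is $|Z(G)_{p'}|\cdot q^{r}$-ish — more precisely $l(B)$ is the number of $p$-restricted weights in the relevant linkage class, and in any case $l(B)\ge q^r/|Z(\bG)^F|$ or a similar lower bound that dominates $k(B)/k(D)$ when combined with Lemma~\ref{lem:k(U)}. For $\Cb$: one wants $k(B)\le l(B)\,k(U)$. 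Using $k(B)\le k(G)$ and a polynomial-in-$q$ upper bound for $k(G)$ (of degree $r$, with known leading coefficient), together with $k(U)\ge q^{r}$ from Lemma~\ref{lem:k(U)} and $l(B)\ge c$ for a suitable constant, the inequality $k(G)\le l(B)\,q^{r}$ holds for all but finitely many small $q$, and those finitely many cases (small rank, $q\in\{2,3,4,\dots\}$) are checked from known character tables or from Proposition~\ref{prop:p-group}.

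For $\Ca$: one wants $k(B)/k_0(B)\le k(U')$. The key input is the much stronger lower bound $k(U')\ge q^{2r-3}$ from the second half of Lemma~\ref{lem:k(U)}, which grows like $q^{2r-3}$, whereas $k(B)\le k(G)$ grows only like $q^{r}$. So once $2r-3\ge r$, i.e. $r\ge3$, the bound $k(U')\ge q^{2r-3}\ge q^{r}\ge k(G)\ge k(B)\ge k(B)/k_0(B)$ settles $\Ca$ outright for all $q$ with $q^{r}\ge k(G)$, again leaving only finitely many $(\text{type},q)$ with $r\ge3$ plus the small-rank types $A_1$ and $A_2$ (and their twisted forms $\tw2A_n$) to handle separately. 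For rank~1 (i.e. $\SL_2(q)$, $\SU_3(q)$) the group is small enough, and the structure of $U$ and its blocks is completely explicit, that a direct verification works; here one uses that $k_0(B)$ is large — for $\SL_2(q)$ the principal block has $k_0(B)$ of size about $q$ — against $k(U')$, which for $\SL_2$ needs the degenerate-rank bookkeeping since $2r-3<0$. The main obstacle I anticipate is precisely this low-rank bookkeeping: Lemma~\ref{lem:k(U)} as stated is only useful for $r\ge2$ (and really for $r\ge3$ in the $\Ca$ estimate), so types $A_1$, $\tw2A_2$, and possibly $A_2$, $B_2$, $\tw2A_3$ at the smallest $q$ must be treated by hand, comparing the explicit values of $k(B)$, $k_0(B)$, $l(B)$ with $k(D)$ and $k(D')$ computed directly — a finite but slightly delicate case analysis, and the place where one must be careful about quasi-simple versus simple (the centre $Z(\bG)^F$ affecting $l(B)$ and the faithful blocks).
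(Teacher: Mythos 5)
The central difficulty in~$\Ca$ is not addressed. You bound $k(B)/k_0(B)$ simply by $k(B)$ and then compare $k(G)$ with $k(U')\ge q^{2r-3}$; but the chain ``$q^{2r-3}\ge q^{r}\ge k(G)$'' breaks at the last step, since a simply connected $G=\bG^F$ already has exactly $q^{r}$ semisimple conjugacy classes (Steinberg), so $k(G)>q^{r}$ for every $G$ and the condition ``$q^{r}\ge k(G)$'' under which you claim to be done is never satisfied. Even after repairing this to ``$q^{2r-3}\ge k(G)$'' and inserting the Fulman--Guralnick bound $k(G)\le 27.2\,q^{r}$, the requirement $q^{r-3}\ge 27.2$ fails for \emph{all} $q$ when $r=2,3$, so types $A_2$, $\tw2A_2$, $B_2$, $A_3$, $B_3$, $C_3$, $\tw2A_3$, $\tw2A_4$, \dots\ become infinite families rather than a finite list of small cases to check by hand. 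The missing idea is a genuine lower bound on $k_0(B)$: the paper bounds the number of $p'$-degree characters in each full-defect block from below by the number of semisimple characters lying over the relevant central character, which is at least $(q^{r}-1)/d$ with $d=|Z(G)|$; the inequality to be verified then becomes $27.2\,d\le(q^{r}-1)q^{r-3}$, which already holds for all $r\ge4$ and for $r=3$ with $q>3$, leaving a genuinely finite residue plus the rank-$2$ types (where sharper bounds on $k(G)$ are used). You do invoke the analogous bound $l(B)\ge(q^{r}-1)/d$ for~$\Cb$, where your argument essentially matches the paper's; it is only in~$\Ca$ that $k_0(B)$ is discarded, and that is precisely where the argument fails.

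A secondary but genuine error is the opening dichotomy ``principal block of full defect, or central defect group $D\le Z(G)$''. Since $Z(G)$ is a $p'$-group here, Humphreys' theorem says that for each $\theta\in\Irr(Z(G))$ the set $\Irr(G\mid\theta)$ splits into one defect-zero block (the Steinberg-type character over $\theta$) and one block of \emph{full} defect; thus there are $|Z(G)|$ full-defect blocks, all with defect group $U$ --- for instance the faithful block of $\SL_2(q)$, $q$ odd, containing the characters of degree $(q\pm1)/2$. Your quantitative estimates would apply to these just as to the principal block (this is how the paper treats them, via the count of $p'$-characters in each $\Irr(G\mid\theta)$ separately), but as written you dispose of the faithful blocks incorrectly by assigning them abelian central defect groups.
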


\begin{proof}
Let $B$ be a $p$-block of $G$. Then by a result of Humphreys (see
\cite[Thm.~6.18]{CE}) either $B$ is of defect zero and $B$ contains just the
Steinberg character of $G$ (whence our claim holds trivially), or it is of full
defect. So in the latter case, the maximal unipotent subgroup $U$ is a defect
group of $B$.  \par
Let's first consider~$\Ca$.
According to \cite[Thm.~1.1]{FG12} we have $k(B)\le k(G)\le 27.2 q^r$, where
$r=|\Delta|$ is the rank of $\bG$. Moreover, $k(U')\ge q^{2r-3}$ by
Lemma~\ref{lem:k(U)}. The number of $p'$-characters of $G$ is bounded below
by the number of semisimple characters (in the sense of Lusztig), hence of
semisimple conjugacy classes of the dual group $G^*$. Observe that $\bG^*$ is
simple if $\bG$ is. The number of semisimple conjugacy classes in the fixed
points of simple algebraic groups under Frobenius maps was determined in
\cite[Thm.~4.1(b) and Table~2]{BL13}. Let $d:=|Z(G)|$.  Then the description
in \cite[\S4.2]{BL13} shows that any of the sets $\Irr(G|\theta)$, for
$\theta\in\Irr(Z(G))$, contains at least $(q^r-1)/d$ irreducible characters
of $p'$-degree. Observe that $d\le \min\{r+1,q+1\}$ in all cases. So we are
done if we can show that
$$27.2\,d\le(q^r-1)q^{r-3}.$$
This holds whenever $r\ge4$. Moreover for $r=3$ it holds whenever $q>3$. For
the finitely many groups of rank~3 with $q\in\{2,3\}$ the assertion can be
checked from their
known character tables. On the other hand, for $r=1$ we have $G=\SL_2(q)$
which has abelian Sylow $p$-subgroups, whence the claim holds by
Theorem~\ref{thm:abelian}. Thus we are left with the case that $r=2$. Then
$\bG$ is either of type $A_2$, $C_2$ or $G_2$. We had already dealt with the
case $G_2$ in \cite[Prop.~3.2]{MN06}. For the other two types of groups, better
bounds on $k(G)$ are available,
namely $k(G)\le q^r+Aq^{r-1}$ for a certain constant $A$ which is at most~12
when $q\ge5$ \cite[Prop.~3.6, 3.10, Thm.~3.12, 3.13]{FG12}. It is easily
seen that this rules out the case $q\ge5$. For the finitely many groups with
$q\le4$ the claim can be checked from their known character tables.
\par
For inequality~$\Cb$ note that since $\bG$ is of simply connected type we have
that $G$ has precisely $l(G)=q^r$ semisimple, that is, $p'$-conjugacy classes
(see e.g. \cite[Thm.~4.1]{BL13}). So there are $l(G)-1=q^r-1$ simple
$\bk G$-modules in blocks of full defect. Let $d:=|Z(G)|$. The description
in \cite[\S4.2]{BL13} shows that any of the $d$-blocks $B$ of full defect
contains exactly $l(B)=(q^r-1)/d$ modular irreducibles. Since $k(U)\ge q^r$ by
Lemma~\ref{lem:k(U)}, here it suffices to show that
$$27.2\,d\le q^r-1.$$
As before, we have $d\le\min\{r+1,q+1\}$. Then the above inequality holds for
all $r\ge7$, and for $q\ge 8-r$ for $r\ge3$. The character tables of all
remaining groups are available in
GAP \cite{GAP}. In rank~2, we can again replace the bound for $k(G)$ by the
smaller values cited above to conclude unless $q\le3$. These last few groups
can again be checked individually.
\end{proof}

\begin{cor}   \label{cor:def char}
 Let $H$ be a quasi-simple group of Lie type in characteristic~$p$. Then all
 $p$-blocks of $H$ satisfy inequalities~$\Ca$ and~$\Cb$.
\end{cor}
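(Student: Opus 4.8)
The plan is to reduce the statement for an arbitrary quasi-simple group $H$ of Lie type in characteristic~$p$ to the case of $G=\bG^F$ with $\bG$ simple of simply connected type, which is exactly the content of Theorem~\ref{thm:def char}. A quasi-simple group $H$ in defining characteristic is a quotient $H=G/Z$ of such a $G=\bG^F$ by a central subgroup $Z\le Z(G)$, except for the finitely many exceptional covering groups; those have already been disposed of in Proposition~\ref{prop:exc cover}, so I may assume $H=G/Z$ with $\bG$ simple simply connected. First I would record that since $p$ is the defining prime and $Z\le Z(G)$ is a $p'$-group (the centre of a simply connected group in defining characteristic is a $p'$-group, as $Z(\bG)$ is a product of groups of order dividing the torsion primes and these are coprime to $p$ — here one uses that the relevant groups of type $G_2,F_4,E_8$ and the Suzuki/Ree groups were already excluded, leaving centres of order prime to $p$), the $p$-blocks of $H$ correspond to $Z$-invariant (equivalently, all) $p$-blocks of $G$, and a defect group of a block $\bar B$ of $H$ is $\bar D=DZ/Z\cong D$ for a defect group $D$ of the covering block $B$ of $G$.

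The second step is to transport the three invariants. Because $|Z|$ is prime to $p$, a character $\chi\in\Irr(B)$ has height zero in $B$ if and only if its image (when $Z\le\ker\chi$) has height zero in $\bar B$; and $\Irr(\bar B)$ is precisely the set of $\chi\in\Irr(B)$ with $Z\le\ker\chi$. Hence $k(\bar B)\le k(B)$, $k_0(\bar B)\le k_0(B)$, and dually $l(\bar B)\le l(B)$. Moreover since $\bar D\cong D$ we have $k(\bar D)=k(D)$ and $k(\bar D')=k(D')$. The cleanest route is then to invoke the \emph{strengthened} bounds actually established in the proof of Theorem~\ref{thm:def char}: there it is shown, for blocks of full defect of $G=\bG^F$ simply connected, that $k(B)\le k_0(B)\,k(D')$ and $k(B)\le l(B)\,k(D)$ — indeed the argument bounds $k(B)$ from above by $27.2\,q^r$ (resp.\ the sharper rank-2 bounds) and bounds $k_0(B)$, $l(B)$ from below by the explicit counts of semisimple characters / $p'$-classes, never dividing by $|Z|$. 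Combined with the defect-zero case (only the Steinberg character, trivial) this gives the inequalities for every block of $G$, hence of $H$ after the comparison above; one does have to be slightly careful that a block of $H$ of full defect lifts to a block of $G$ of full defect, which holds since inflation along a $p'$-quotient preserves defect.

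The step I expect to cause the most friction is not conceptual but bookkeeping: making sure the reduction $H=G/Z$ with $\bG$ simply connected genuinely covers \emph{all} quasi-simple groups of Lie type in characteristic~$p$ — in particular handling the adjoint and intermediate isogeny types (which were already settled in \cite[Prop.~3.2]{MN06} and recalled at the start of Section~\ref{sec:def char}), the non-generic isomorphisms between small groups, and confirming that the finitely many exceptional Schur multipliers in characteristic~$p$ are exactly those covered by Proposition~\ref{prop:exc cover}. Once that enumeration is checked, the transfer of the three invariants through the $p'$-central quotient is routine, and the corollary follows.
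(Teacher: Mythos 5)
Your overall route is exactly the paper's: discard the exceptional covering groups via Proposition~\ref{prop:exc cover}, observe that every remaining quasi-simple $H$ is $G/Z$ for $G=\bG^F$ simply connected with $Z\le Z(G)$ a central $p'$-subgroup, and reduce to Theorem~\ref{thm:def char}. But the step where you transport the invariants does not close as written. You record only the one-sided inequalities $k(\bar B)\le k(B)$, $k_0(\bar B)\le k_0(B)$, $l(\bar B)\le l(B)$, and these point the \emph{wrong} way: from $k(B)\le k_0(B)\,k(D')$ you cannot deduce $k(\bar B)\le k_0(\bar B)\,k(\bar D')$ if $k_0(\bar B)$ might be strictly smaller than $k_0(B)$ (and likewise for $l$ and (C2)). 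Invoking the ``strengthened'' form of the bounds from the proof of Theorem~\ref{thm:def char} does not help here, since $k(B)\le k_0(B)\,k(D')$ is literally the same inequality as $\Ca$ for $B$.

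The missing (standard) ingredient, which is what the paper means by ``any $p$-block of $H$ \emph{is} a $p$-block of $G$'': because $Z$ is central of order prime to $p$, every $p$-block of $G$ lies over a single linear character of $Z$; the block $B$ of $G$ dominating $\bar B$ lies over the trivial one, so \emph{all} of $\Irr(B)$ has $Z$ in its kernel and inflation gives $\Irr(\bar B)=\Irr(B)$, a height-preserving identification, with $l(\bar B)=l(B)$ and $\bar D\cong D$. Thus $k$, $k_0$, $l$, $k(D)$, $k(D')$ all coincide for $B$ and $\bar B$, and the corollary follows from Theorem~\ref{thm:def char} with no further estimates. You half-state this (``$\Irr(\bar B)$ is precisely the set of $\chi\in\Irr(B)$ with $Z\le\ker\chi$'') but then fail to note that this set is all of $\Irr(B)$. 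With that one-line correction your argument is the paper's proof; the worries about isogeny types are unnecessary, since a quasi-simple group is perfect and hence is a central quotient of the full covering group, which for non-exceptional multipliers is $\bG_{\SC}^F$.
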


\begin{proof}
By Proposition~\ref{prop:exc cover} we may assume that $H$ is not an
exceptional covering group of the simple group $H/Z(H)$. But then $|Z(H)|$ is
prime to $p$, so any $p$-block of $H$ is a $p$-block of a quasi-simple group
$G=\bG^F$ as in Theorem~\ref{thm:def char}, and the claim follows.
\end{proof}

\section{Groups of classical Lie type in non-defining characteristic}   \label{sec:cross class}
We now turn to $\ell$-blocks of groups of Lie type in characteristic~$p$,
where $\ell$ is different from $p$. We keep the algebraic group setup from
Section~\ref{sec:def char} and let $G=\bG^F$ for $\bG$ simple of simply
connected type defined over $\FF_q$.

\subsection{On non-unipotent blocks}

Let $B$ be an $\ell$-block of $G$. Then by a fundamental result of Brou\'e
and Michel (see e.g. \cite[Thm.~9.12(i)]{CE}) there is a semisimple
$\ell'$-element $s\in G^*:=\bG^{*F}$, where
$\bG^*$ is dual to $\bG$ with a Steinberg map also denoted $F$, such that
$$\Irr(B)\subseteq\cE_\ell(G,s):=\coprod_t\cE(G,st)$$
with the union running over $\ell$-elements $t\in C_{G^*}(s)$ up to conjugation.

\begin{lem}   \label{lem:BDR}
 Let $B$ be an $\ell$-block of $G$ such that $\Irr(B)\subseteq\cE_\ell(G,s)$.
 Assume that $C_{\bG^*}^\circ(s)$ lies in a proper $F$-stable Levi subgroup
 of $\bG^*$. Then $B$ is not a minimal counterexample to the inequalities~$\Ca$
 and~$\Cb$.
\end{lem}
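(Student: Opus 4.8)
The plan is to use Bonnafé--Rouquier's theory of Jordan decomposition of blocks to reduce the block $B$ of $G$ to a block of a proper subgroup with the same defect group, so that minimality forces the inequalities.

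\medskip

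More precisely, let $\bL^*$ be a proper $F$-stable Levi subgroup of $\bG^*$ containing $C_{\bG^*}^\circ(s)$, and let $\bL\le\bG$ be a Levi subgroup in duality with $\bL^*$, with $L=\bL^F$. First I would recall that by the Bonnafé--Rouquier theorem (see \cite[Thm.~9.12]{CE}), the Lusztig induction $R_{\bL}^{\bG}$ induces a Morita equivalence between $\mathcal{O}L b_s^L$ and $\mathcal{O}G b_s^G$, where $b_s^L$, $b_s^G$ are the sums of block idempotents lying above the series of $s$; under this equivalence $B$ corresponds to a block $b$ of $L$, and the equivalence preserves $k(B)$, $l(B)$, $k_0(B)$ (it is compatible with heights since $|\bL^*|_{\ell'}$ and degree considerations match up to the fixed $\ell'$-part ratio), and, crucially, it sends a defect group of $B$ to a defect group of $b$ — so the defect groups of $B$ and $b$ are isomorphic. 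The remaining point is that $|L/Z(L)|<|G/Z(G)|$, which holds because $\bL$ is a proper Levi subgroup of the simple simply connected group $\bG$.

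\medskip

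Then the argument concludes as follows: since $b$ is a block of $L$ with defect group isomorphic to that of $B$ and $|L/Z(L)|<|G/Z(G)|$, the definition of minimal counterexample tells us that $b$ is not itself a counterexample to $\Ca$ or $\Cb$ — that is, $k(b)/k_0(b)\le k(D')$ and $k(b)/l(b)\le k(D)$ where $D$ is the common defect group. But $k(B)=k(b)$, $k_0(B)=k_0(b)$, $l(B)=l(b)$ by the Morita equivalence, so the same inequalities hold for $B$, i.e. $B$ is not a minimal counterexample to either $\Ca$ or $\Cb$. One must be slightly careful: $L$ need not be quasi-simple, but the notion of minimal counterexample in the paper is stated for arbitrary finite groups $G_1$ with $|G_1/Z(G_1)|$ small, so this causes no trouble.

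\medskip

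The main obstacle I anticipate is making sure the Bonnafé--Rouquier Morita equivalence really does transport defect groups and preserve heights on the nose; in particular one needs that the block $b$ of $L$ corresponding to $B$ has a defect group isomorphic (not merely isoclinic or of the same order) to that of $B$, and that the bijection $\Irr(B)\to\Irr(b)$ preserves height zero. This is known — the Morita equivalence is induced by a bimodule with endopermutation source, hence is defect-group-preserving, and the relation between character degrees in the two series differs by the fixed factor $|G|_p/|L|_p$ up to a common scalar so heights are preserved — but it should be cited carefully (e.g. via \cite{CE}). The rest is essentially bookkeeping with the definition of minimal counterexample.
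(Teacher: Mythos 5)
Your proposal is correct and follows essentially the same route as the paper: both invoke the Bonnaf\'e--Dat--Rouquier Morita equivalence \cite{BDR17} to transport $B$ to a block $b$ of the dual $L$ of the proper Levi subgroup, with isomorphic defect groups and the same invariants $k$, $k_0$, $l$, and then conclude from the definition of minimal counterexample (noting, as you do, that $L$ need not be quasi-simple and that $|L/Z(L)|<|G/Z(G)|$). The only slip is bibliographic: the equivalence under the weaker hypothesis $C_{\bG^*}^\circ(s)\le\bL^*$ (connected centraliser only), together with the preservation of defect groups, is the main theorem of \cite{BDR17}, not \cite[Thm.~9.12]{CE}, which is the Brou\'e--Michel block decomposition.
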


\begin{proof}
Let $\bL^*<\bG^*$ be a proper $F$-stable Levi subgroup of $\bG^*$ containing
$C_{\bG^*}^\circ(s)$, and let $\bL\le\bG$ be dual to $\bL^*$, with $F$-fixed
points $L=\bL^F$.
By the main result of \cite{BDR17} any block $B$ in $\cE_\ell(G,s)$ is Morita
equivalent to a block $b$ of $L$ in $\cE_\ell(L,s)$ with isomorphic defect
groups. Thus all block theoretic invariants occurring in our inequalities
agree for $B$ and $b$, and as $L$ is proper in $G$ by assumption, $B$
cannot be a minimal counterexample.
\end{proof}

Elements $s\in G^*$ such that $C_{\bG^*}^\circ(s)$ does not lie in a proper
$F$-stable Levi subgroup of $\bG^*$ are called \emph{isolated}. So as far as
minimal counterexamples are concerned we only need to consider blocks in
isolated series. One important case is for $s=1$, that is, for unipotent
blocks.

\begin{prop}   \label{prop:Enguehard}
 Let $B$ be an isolated, non-unipotent $\ell$-block of a quasi-simple group
 of Lie type $H$ for a prime $\ell\ge5$ that is good for $H$. Then $B$ is not
 a minimal counterexample to either~$\Ca$ or~$\Cb$.
\end{prop}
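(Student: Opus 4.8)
The plan is to reduce the isolated non-unipotent case to the unipotent case (handled later in the paper) via Jordan decomposition of blocks, exactly as Enguehard's classification of such blocks makes possible. First I would invoke the hypothesis that $\ell\ge5$ is good for $H$: under these assumptions Enguehard's work on the structure of isolated blocks (and the Bonnaf\'e--Rouquier and Bonnaf\'e--Dat--Rouquier Jordan decomposition results already used in Lemma~\ref{lem:BDR}) shows that a block $B\subseteq\cE_\ell(G,s)$ for isolated $s$ is Morita equivalent, with compatible defect groups, to a unipotent block $\bar B$ of $C_{\bG^*}^{*F}(s)$ (or of a closely related group built from the components of $C_{\bG^*}^\circ(s)$), the isolated non-unipotent cases being a short explicit list since $s$ has order divisible only by bad primes. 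Since all four invariants $k(B)$, $k_0(B)$, $l(B)$, $k(D)$, $k(D')$ appearing in $\Ca$ and $\Cb$ are preserved under a Morita equivalence with isomorphic defect groups, the inequalities for $B$ follow from the corresponding inequalities for the unipotent block $\bar B$.

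The second step is to check that the group in which $\bar B$ lives is not larger than $G$ in the relevant ordering, so that ``not a minimal counterexample'' is genuinely inherited. Here $C_{\bG^*}^\circ(s)$ for isolated $s$ is a proper connected reductive subgroup whenever $s\ne1$ (its root system is a proper sub-root-system obtained by deleting nodes from the extended Dynkin diagram), so $|C^{*F}(s)/Z|$ is strictly smaller than $|G/Z(G)|$; thus $\bar B$ is a unipotent block of a smaller quasi-simple (or at worst a central/direct product of smaller quasi-simple) group, and once we know the inequalities hold for all unipotent blocks of classical and exceptional groups — which is the content of Sections~\ref{sec:cross class} and~\ref{sec:cross exc} — we are done. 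One must be a little careful when $C_{\bG^*}^\circ(s)$ is not semisimple or when $C_{\bG^*}(s)/C_{\bG^*}^\circ(s)$ is nontrivial, but for isolated $s$ in simply connected $\bG^*$ with $\ell$ good the component group is controlled and the block of $C_{G^*}(s)$ again has the required Morita-equivalence properties by Enguehard's analysis.

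The main obstacle I expect is not a hard inequality but bookkeeping: one has to quote precisely which of Enguehard's theorems (on isolated blocks for good primes $\ell\ge5$, or equivalently $\ell\ge7$ together with the handful of $\ell=5$ exceptions) gives the Morita equivalence to a unipotent block, verify that it supplies isomorphic — not merely isoclinic or ``same order'' — defect groups, and handle the few sporadic isolated classes (e.g. in $E_7$, $E_8$) where $C^\circ(s)$ might have several quasi-simple factors, applying the unipotent-block results factor by factor and combining them multiplicatively. Since $\Ca$ and $\Cb$ behave well under direct products (the left-hand sides and right-hand sides multiply), this combination step is routine once the unipotent results are in hand. So the proof is essentially: (i) reduce to unipotent blocks of $C^{*F}(s)$ by Jordan decomposition of blocks à la Bonnaf\'e--Dat--Rouquier and Enguehard, using goodness of $\ell$; (ii) note $C^\circ(s)$ proper forces a strictly smaller group; (iii) defer to the unipotent case proved in the following two sections.
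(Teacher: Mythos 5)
Your core idea --- use Enguehard's Jordan decomposition of blocks to move $B$ to a block with the same invariants on a strictly smaller group --- is exactly the engine of the paper's proof, which cites \cite[Thm.~1.6]{En08} after first discarding exceptional covering groups and noting that type $A$ has no non-trivial isolated elements (so $\ell$ good forces $\ell\nmid|Z(G)|$ and one may work in $\bG^F$). But two points in your write-up deserve correction. First, you attribute the reduction to a Morita equivalence ``\`a la Bonnaf\'e--Dat--Rouquier.'' That theorem applies precisely when $C_{\bG^*}^\circ(s)$ lies in a \emph{proper} Levi subgroup, i.e.\ in the non-isolated case already handled by Lemma~\ref{lem:BDR}; for isolated $s\ne1$ it says nothing. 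What Enguehard provides for isolated series at good primes is weaker: a block $B_1$ of a group $G_1$ with the same $k$, $l$ and defect group and a \emph{height-preserving bijection} $\Irr(B)\to\Irr(B_1)$, not a Morita equivalence. Fortunately the numerical correspondence is all that the inequalities $\Ca$ and $\Cb$ require.

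Second, your step (iii) --- verifying the inequalities for the unipotent block of $C_{G^*}(s)$ by deferring to the later sections and multiplying over simple factors --- is both unnecessary and the source of a real gap. The statement to be proved is only that $B$ is not a \emph{minimal} counterexample: once you have a block $B_1$ with identical invariants and isomorphic defect groups on a group $G_1$ with $|G_1/Z(G_1)|<|G/Z(G)|$, you are done by the very definition of minimality, with no need to know anything further about $B_1$. Trying instead to prove the inequalities outright for $B_1$ forces you to extend the unipotent-block results of Sections~\ref{sec:cross class} and~\ref{sec:cross exc} from quasi-simple groups to the reductive, generally non-quasi-simple groups $C_{G^*}(s)$ (products of several components, central factors, disconnected centres); your appeal to multiplicativity over direct products does not by itself cover central products and quotients, and the paper deliberately avoids this bookkeeping. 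Drop step (iii), replace the Morita claim by Enguehard's invariant-preserving correspondence, and your argument matches the paper's.
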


\begin{proof}
By Proposition~\ref{prop:spor} we may assume that $H$ is not an exceptional
covering group. Thus, $H=G/Z$, where $G=\bG^F$ is as above and $Z\le Z(G)$. By
Lemma~\ref{lem:BDR} we may assume that $\bG$ is not of type $A$ as the
only isolated element in type $A$ is the identity, which corresponds to the
unipotent blocks. But then $\ell$ good implies that $\ell$ does not divide
$|Z(G)|$, so we may consider $B$ as an $\ell$-block of $G$. By the main result
of Enguehard \cite[Thm.~1.6]{En08} there is a group $G_1$, with $|G_1/Z(G_1)|$
strictly smaller than $|G/Z(G)|$ (since $B$ is not unipotent) with an
$\ell$-block $B_1$ having the same invariants ($k(B)$, $l(B)$ and defect group)
as $B$ and with a height preserving bijection $\Irr(B)\rightarrow\Irr(B_1)$.
In particular, $B$ and any block of $H$ dominated by $B$ is not a minimal
counterexample to~$\Ca$ or~$\Cb$.
\end{proof}

We note one further reduction which will be used for isolated 5-blocks of
$E_8(q)$:

\begin{lem}   \label{lem:one block}
 Let $s\in G^*$ be a non-central semisimple $\ell'$-element with connected
 centraliser $\bC^*=C_{\bG^*}(s)$ and let $\bC$ be dual to $\bC^*$. Assume
 that both $\cE_\ell(G,s)$ and $\cE_\ell(C,1)$ form single $\ell$-blocks $B$,
 $b$ respectively, with isomorphic defect groups. 
 Then $B$ is not a minimal counterexample to~$\Ca$. If in addition
 $l(B)\ge l(b)$ then $B$ is not a minimal counterexample to~$\Cb$ either.
\end{lem}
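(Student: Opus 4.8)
The plan is to exploit a Jordan-type decomposition argument comparing invariants of $B$ with those of the unipotent block $b$ of $C = \bC^F$. First I would recall that since $s$ is an $\ell'$-element with connected centraliser $\bC^* = C_{\bG^*}(s)$, Lusztig's Jordan decomposition gives a bijection $\cE(G,s) \leftrightarrow \cE(C^*,1)$ (where I identify $C$ with the dual of $\bC^*$ suitably, noting $\bC = \bC^{*}$ up to duality since its own dual reproduces the connected centraliser), and more generally, combining this with the parametrisation of blocks, the union $\cE_\ell(G,s)$ matches up with $\cE_\ell(C,1)$; since both are assumed to form single blocks $B$, $b$ respectively, this yields a bijection $\Irr(B) \leftrightarrow \Irr(b)$ with $k(B) = k(b)$, and by hypothesis the defect groups are isomorphic so $k(D') = k(D'_b)$ and $k(D) = k(D_b)$.

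The next step is the key point: this Jordan decomposition bijection is \emph{height preserving} (up to the global shift in $\ell$-part of degrees between the two series, which is constant on each Lusztig series and hence does not affect heights \emph{within} a block). More precisely, for $\chi \in \cE(G,st)$ corresponding to $\hat\chi \in \cE(C,t)$ one has $\chi(1)_\ell = \hat\chi(1)_\ell \cdot |G^*:C^*|_{\ell'}$-type factors that are $\ell'$, so $\chi$ has height zero in $B$ iff $\hat\chi$ has height zero in $b$; thus $k_0(B) = k_0(b)$. Therefore
$$\frac{k(B)}{k_0(B)} = \frac{k(b)}{k_0(b)} \le k(D'_b) = k(D'),$$
where the middle inequality is $\Ca$ for the unipotent block $b$ of $C$. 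Now $C = \bC^F$ where $\bC^* = C_{\bG^*}(s)$ is a proper connected reductive subgroup of $\bG^*$ (proper since $s$ is non-central), so $|C/Z(C)| < |G/Z(G)|$, and hence $b$ — living in a strictly smaller quasi-simple-type group — may be assumed to satisfy $\Ca$ because $B$ is a \emph{minimal} counterexample; this is exactly where minimality is used. This establishes that $B$ is not a minimal counterexample to $\Ca$.

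For $\Cb$, the same comparison gives $k(B) = k(b)$ and $k(D) = k(D_b)$, and the extra hypothesis $l(B) \ge l(b)$ yields
$$\frac{k(B)}{l(B)} \le \frac{k(b)}{l(b)} \le k(D_b) = k(D),$$
the last inequality again being $\Cb$ for $b$, valid by minimality of $B$. The main obstacle I anticipate is being careful about two subtleties: first, the group $C$ need not itself be quasi-simple, so I must phrase minimality in terms of $|C/Z(C)| < |G/Z(G)|$ and the ambient framework of Theorem~1's hypotheses (which cover blocks of quasi-simple groups, handled componentwise via the reductions of Section~\ref{sec:red}); and second, verifying that the Jordan decomposition bijection genuinely preserves heights, for which one invokes the fact that the $\ell$-part of $|G:C|$ is trivial — this follows since $s$ is an $\ell'$-element, so that $C_{\bG^*}(s)$ contains a Sylow $\ell$-subgroup of $\bG^{*F}$ and hence $|G^*:C^*|_\ell = 1$, making the degree ratio $\chi(1)/\hat\chi(1)$ an $\ell'$-number on each Lusztig series. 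Granting these, the argument is a short formal deduction from the minimality hypothesis.
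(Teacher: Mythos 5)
Your proposal follows the same route as the paper's proof: Jordan decomposition matches $\Irr(B)$ with $\Irr(b)$ series by series, the resulting bijection preserves heights, and minimality applied to the block $b$ of the strictly smaller group $C$ yields both claims. There is, however, one genuine flaw, precisely at the step you single out as needing care: you justify height preservation by asserting that, since $s$ is an $\ell'$-element, $C_{\bG^*}(s)$ contains a Sylow $\ell$-subgroup of $G^*$, so that $|G^*:C^*|_\ell=1$. That is false in general — being an $\ell'$-element says nothing about the $\ell$-part of the index of its centraliser — and it is false in every case where the lemma is actually invoked in Theorem~\ref{thm:exc}: for $G=E_8(q)$, $\ell=5$ dividing $q-1$, and $s$ isolated with centraliser of type $A_8$ (or $A_7+A_1$, or $A_5+A_2+A_1$), one has $|E_8(q)|_5=(q-1)_5^8\cdot 25$ while $|C^*|_5=(q-1)_5^8\cdot 5$, so $|G^*:C^*|_5=5$; correspondingly the defect groups here are not Sylow $\ell$-subgroups of $G$.

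The repair is the argument the paper gives, and it is the reason the hypothesis on isomorphic defect groups is needed: if $\chi\in\cE(G,st)$ and $\chi'\in\cE(C,t)$ correspond to the same unipotent character of $C_{G^*}(st)=C_{C^*}(t)$, the degree formula for Jordan decomposition gives $\chi(1)/\chi'(1)=|G^*:C^*|_{p'}$, a \emph{constant} independent of $t$ and $\chi$. Its $\ell$-part is $|G:C|_\ell$, which is exactly the ratio of $|G|_\ell$ to $|C|_\ell$; since $|D|=|D_b|$ by assumption, the quantities $\ell^{\mathrm{ht}(\chi)}=\chi(1)_\ell\,|D|/|G|_\ell$ and $\ell^{\mathrm{ht}(\chi')}=\chi'(1)_\ell\,|D_b|/|C|_\ell$ agree. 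So constancy of the degree ratio, not its triviality modulo $\ell$, is what makes the bijection height preserving. With that substitution the rest of your deduction is exactly the paper's.
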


\begin{proof}
We have that $\Irr(B)=\coprod_t\cE(G,st)$ and $\Irr(b)=\coprod_t\cE(C,t)$
where both disjoint unions run over $C^*:=C_{G^*}(s)$-conjugacy classes of
$\ell$-elements $t$ in $C^*$. For any such $t$ the Jordan decompositions in $G$
as well as in $C$ establish bijections from both $\cE(G,st)$ and $\cE(C,t)$ to
the same Lusztig series $\cE(C_t,1)$ of $C_t:=C_{G^*}(st)=C_{C^*}(t)$. Thus,
if $\chi\in\cE(G,st)$ and $\chi'\in\cE(C,t)$ correspond to
the same character in $\cE(C_t,1)$ we have that
$\chi(1)/\chi'(1)=|G^*:C_{G^*}(s)|_{p'}$ is constant. So there is a height
preserving bijection from $\Irr(B)$ to $\Irr(b)$, whence $k(B)=k(b)$ and
$k_0(B)=k_0(b)$. As the defect groups of $B,b$ are isomorphic by assumption,
this yields the first claim. If in addition $l(B)\ge l(b)$, then
$k(B)/l(B)\le k(b)/l(b)$, thus $B$ cannot be a minimal counterexample to~$\Cb$
either.
\end{proof}

\subsection{Estimates for numbers of multipartitions}

For integers $b,w\ge0$, let $k(b,w)$ denote the number of $b$-multipartitions
of $w$. These occur in the expression of block theoretic invariants of classical
groups, and we will need upper and lower bounds for them. Firstly, by
\cite[Lemma~11.11]{Ol93} and \cite[Lemma~4.6]{MN06} they satisfy:

\begin{lem}   \label{lem:k(a,b)}
 For all $b\ge3$ and $w,w_1,w_2\ge1$  we have:
 \begin{enumerate}
  \item[\rm(a)] $k(b,w)\le b^w$, and
  \item[\rm(b)] $k(b,w_1+w_2)\le k(b,w_1)k(b,w_2)$.
 \end{enumerate}
\end{lem}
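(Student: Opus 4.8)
The plan is to establish the two inequalities in Lemma~\ref{lem:k(a,b)} by elementary generating‑function arguments, treating part~(a) first and then part~(b).

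For part~(a), recall that a $b$-multipartition of $w$ is a $b$-tuple $(\la^{(1)},\dots,\la^{(b)})$ of partitions with $\sum_i|\la^{(i)}|=w$. I would therefore compute the generating function
\[
\sum_{w\ge0}k(b,w)x^w=\prod_{j\ge1}(1-x^j)^{-b}.
\]
Since each factor $(1-x^j)^{-1}$ has nonnegative coefficients, one gets a crude bound by comparing with $\prod_{j\ge1}(1-x^j)^{-b}\preccurlyeq\bigl(\sum_{w\ge0}p(w)x^w\bigr)^b$ where $p(w)=k(1,w)$ is the ordinary partition function; so $k(b,w)\le\sum p(w_1)\cdots p(w_b)$ over $w_1+\dots+w_b=w$. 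This is not yet $b^w$, so a sharper route is needed: I would instead argue directly that $k(b,w)$ counts the number of ways to choose, for each $j\ge1$, a function from the $b$ components to the multiplicity of part $j$; equivalently $k(b,w)$ equals the number of multisets of pairs $(j,c)$ with $c\in\{1,\dots,b\}$ whose first coordinates sum to $w$, i.e.\ the number of partitions of $w$ into parts each coloured in one of $b$ colours. Bounding this by injecting into the set of all functions from a $w$-element "slot set" to $b$ colours after fixing an underlying partition, and then using $\sum_{\text{partitions }\mu\text{ of }w}b^{(\text{number of parts of }\mu)}\le b^w$ (valid since the number of parts is at most $w$ and the number of partitions with $m$ parts contributes at most... ) — here the clean statement is simply that colouring the $w$ unit cells of a partition by $b$ colours, constant on each row, is dominated by colouring all $w$ cells freely, giving $k(b,w)\le b^w$. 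I would present this counting argument carefully since it is the substantive content; once the combinatorial model is fixed the bound is immediate, and indeed this is precisely \cite[Lemma~11.11]{Ol93}.

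For part~(b), I would use the generating function again: from $\sum_w k(b,w)x^w=F(x)^b$ with $F(x)=\prod_j(1-x^j)^{-1}$, the coefficient of $x^{w_1+w_2}$ in $F(x)^{b}=F(x)^{b}\cdot 1$ is at least the coefficient of $x^{w_1+w_2}$ in the product $\bigl(\sum_w k(b,w)x^w\bigr)\cdot\bigl(\sum_w k(b,w)x^w\bigr)=F(x)^{2b}$ minus a correction — that comparison goes the wrong way. The right combinatorial statement is the subadditive/supermultiplicative one: splitting a $b$-multipartition of $w_1+w_2$ is harder than it looks, so instead I would prove the \emph{injection} $k(b,w_1)\times k(b,w_2)\hookrightarrow$ (pairs decomposing a $b$-multipartition of $w_1+w_2$) is a surjection after grouping, giving $k(b,w_1+w_2)\le k(b,w_1)\,k(b,w_2)$ directly: given a $b$-multipartition $\underline\la$ of $w_1+w_2$, for each component $\la^{(i)}$ pick the decomposition that puts the largest parts of $\la^{(i)}$, up to total size closest to a prescribed cut, into the first factor; more cleanly, use that $F(x)$ is a power series with nonnegative coefficients so that for any nonnegative power series the coefficients of $F^b$ satisfy $[x^{m+n}]F^b\le\bigl([x^{\le m}]F^b\bigr)\cdot\bigl([x^{\le n}]F^b\bigr)$ is false in general — so ultimately I would simply cite \cite[Lemma~4.6]{MN06} for (b) after giving the one‑line reduction, exactly as the statement already advertises.

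I expect the main obstacle to be getting part~(a) as the sharp bound $b^w$ rather than a weaker power of $b$: the naive generating‑function estimate only yields something like $(\sum p(w)x^w)^b$, and turning that into $b^w$ requires the correct combinatorial model (partitions of $w$ with $b$‑coloured parts, or equivalently $b$‑coloured cells constant along rows) together with the observation that the number of parts of any partition of $w$ is at most $w$. Once that model is in place both inequalities are short, and since the lemma explicitly attributes them to \cite[Lemma~11.11]{Ol93} and \cite[Lemma~4.6]{MN06}, the cleanest write‑up is to recall the model, deduce~(a) in two lines, and for~(b) either give the grouping injection or defer to the cited reference; I would do the former for~(a) and cite for~(b), matching the phrasing of the statement.
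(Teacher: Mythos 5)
The paper offers no argument for this lemma at all: both parts are simply quoted, (a) from \cite[Lemma~11.11]{Ol93} and (b) from \cite[Lemma~4.6]{MN06}, so insofar as you end by deferring to those same references your write-up coincides with the paper's. The problem is the standalone argument you sketch for (a) before falling back on the citation: it does not work. Your closing step --- that colouring the $w$ cells of a Young diagram by $b$ colours, constant on rows, ``is dominated by colouring all $w$ cells freely'' --- is not an injection and is not numerically sound in the form you state it. If you count row-colourings as functions from rows to colours you get $\sum_{\mu\vdash w}b^{\ell(\mu)}$, which already exceeds $b^w$ for $w=2$ (namely $b+b^2>b^2$); the correct count of $b$-multipartitions with underlying partition $\mu$ is $\prod_j\binom{m_j(\mu)+b-1}{b-1}$, where $m_j(\mu)$ is the multiplicity of the part $j$, and the inequality $\sum_{\mu\vdash w}\prod_j\binom{m_j(\mu)+b-1}{b-1}\le b^w$ is exactly the content of part (a), not something that follows from forgetting the partition structure. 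Two sanity checks show that no crude injection of this kind can exist: the bound is attained with equality at $b=3$, $w=2$ (one has $k(3,2)=9=3^2$, so any injection would have to be a bijection there), and it is false for $b=2$ (one has $k(2,2)=5>4$). Since your argument nowhere uses the hypothesis $b\ge3$, it cannot be correct as it stands.

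For (b) you correctly observe that the naive generating-function comparison goes the wrong way, but the ``injection \dots\ is a surjection after grouping'' you then gesture at is not well defined: componentwise union of parts is not surjective onto $b$-multipartitions of $w_1+w_2$ (a multipartition consisting of a single part of size $2$ cannot be assembled from two multipartitions of size $1$), so any proof along these lines needs a genuinely different idea; note also that (b), like (a), fails for $b\le 2$, so the hypothesis $b\ge3$ must enter. Ultimately you cite \cite[Lemma~4.6]{MN06}, which is acceptable and is precisely what the paper does. The honest write-up is therefore either to cite both references, as the paper does, or to supply an actual proof of (a); the combinatorial shortcut you propose is not one.
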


We will need the following improvement of the first assertion:

\begin{lem}   \label{lem:k(b,w)}
 Let $b\ge4$ and $w\ge5$. Then we have:
 \begin{enumerate}
  \item[\rm(a)] $k(bx,w)\le (bx)^w x^{-0.73\,w/\ln(x)}$ for all $x\ge5$; and
  \item[\rm(b)] $k(b,w)\le b^{w-0.47\,w/\ln(b)}$ unless $b=4$, $w\le10$ or
   $b=5$, $w\le7$.
 \end{enumerate}
\end{lem}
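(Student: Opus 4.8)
The plan is to bound the number of multipartitions $k(c,w)$ via a generating function / saddle-point style estimate, sharpening the crude inequality $k(b,w)\le b^w$ from Lemma~\ref{lem:k(a,b)}(a). Recall that $k(c,w)$ counts $c$-tuples of partitions whose sizes sum to $w$; equivalently, $\sum_{w\ge0}k(c,w)t^w=\prod_{j\ge1}(1-t^j)^{-c}$. The key gain comes from the observation that the bound $b^w$ over-counts because a multipartition of $w$ uses at most $w$ ``parts'' distributed among the $b$ components, so the number of components actually supporting a nonzero partition is small compared to $b$ when $b$ is large relative to $w$. Concretely I would use the combinatorial identity $k(c,w)=\sum_{S}\prod_{i\in S}k(1,w_i)$ where $S$ ranges over the choices of which of the $c$ components are nonempty and $\sum_{i\in S}w_i=w$ with each $w_i\ge1$; this gives
$$k(c,w)=\sum_{j=0}^{w}\binom{c}{j}\,m(j,w),$$
where $m(j,w)$ is the number of ways to write $w$ as an ordered sum of $j$ positive parts, each decorated with a partition of that part --- i.e. $m(j,w)=\sum_{w_1+\dots+w_j=w,\ w_i\ge1}\prod p(w_i)$, which depends only on $j$ and $w$, not on $c$.

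For part~(a), write $c=bx$ with $x\ge5$. Since $j\le w$ and $b\ge4$, $w\ge5$, the binomial $\binom{bx}{j}$ is bounded by $(bx)^j/j!\le (bx)^w (bx)^{j-w}$, and summing the geometric-type tail one extracts a factor that behaves like $(bx)^{-(w-j)}$ for the dominant terms; the crux is that the effective number of nonempty components $j$ is $O(w/\ln x)$ in the worst case because $m(j,w)=0$ once $j>w$ and $m(j,w)$ is largest for moderate $j$. A clean way: bound $k(bx,w)\le (bx)^w\cdot\sum_{j\le w} m(j,w)(bx)^{j-w}/j!\cdot$(combinatorial factor), and then show $\sum_{j\le w}m(j,w)/(bx)^{w-j}\le x^{-0.73w/\ln x}$ using $m(j,w)\le k(w,w)$-type crude bounds together with $bx\ge 20$ and the constraint $x\ge5$; the constant $0.73$ will fall out of optimizing $(w-j)\ln(bx)$ against $\ln\binom{bx}{j}$ at the worst admissible $j$. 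I expect this to reduce to a one-variable convexity check plus verification of finitely many small $(b,w)$ with $b\in\{4,5,\dots\}$ and $w\in\{5,\dots\}$ near the boundary.

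For part~(b), the argument is the same with $x$ replaced by $b$ itself (so the ``slack'' is smaller, giving the weaker exponent $0.47$ instead of $0.73$), and one must now track the boundary cases more carefully: the claimed inequality $k(b,w)\le b^{w-0.47w/\ln b}$ can genuinely fail when $b$ is small and $w$ small because then there is not enough room for the over-counting-correction to kick in --- hence the stated exceptions $b=4,\ w\le10$ and $b=5,\ w\le7$. The strategy is: (i) prove the inequality for all $b\ge6$ and all $w\ge5$ by the generating-function estimate, where the margin is comfortable; (ii) for $b\in\{4,5\}$, use the submultiplicativity Lemma~\ref{lem:k(a,b)}(b) to bootstrap from a base range of $w$ (if it holds for $w=w_0$ and for $w=w_1$ it holds for $w_0+w_1$, modulo checking the exponent is superadditive in the relevant range), reducing to a finite check; (iii) evaluate $k(4,w)$ for $w\le 10$ and $k(5,w)$ for $w\le7$ explicitly (these are small, tabulated numbers) to confirm exactly which cases are genuine exceptions.

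The main obstacle is part~(a): getting the explicit constant $0.73$ with a clean, verifiable chain of inequalities rather than an asymptotic statement. The difficulty is that the sum over $j$ (number of active components) has its dominant term at a $j$ that itself depends on $b,x,w$, so one cannot simply bound every term by the largest; instead I would split the sum at a threshold $j_0\approx c/e$ or at $j_0\approx w$, bound the head using $\binom{c}{j}\le c^j/j!$ and Stirling, bound the tail trivially, and then reduce to checking that a single smooth function of $(\ln b,\ln x, w)$ is nonpositive on the region $b\ge4$, $x\ge5$, $w\ge5$ --- with a short finite table handling the corner near $(b,w)=(4,5)$ and $(5,5)$ where the estimate is tightest. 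The analogous finite check in part~(b) is what forces the precise list of exceptions, and I would present those exceptions simply by direct computation of the relevant $k(b,w)$.
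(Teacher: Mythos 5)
Your decomposition $k(c,w)=\sum_{j}\binom{c}{j}m(j,w)$ (indexing by the set of nonempty components) is correct and genuinely different from the paper's, which instead splits the $bx$ components into $x$ blocks of size $b$, writes $k(bx,w)=\sum_{(i_1,\dots,i_x)\models w}k(b,i_1)\cdots k(b,i_x)$, bounds each product by $b^w$ via Lemma~\ref{lem:k(a,b)}(a), and so reduces everything to the single binomial inequality $\binom{x+w-1}{w}\le x^{w-0.73w/\ln(x)}$, proved by a short induction on $w$ (for part~(b) the extra input is Olsson's bound $k(x,w)\le\binom{x+w-1}{w}\al^w$ with $\al$ the golden ratio, whence $0.952-\ln(\al)/1>0.47$). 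However, your proposal has a genuine gap at its quantitative core, and the specific chain of inequalities you write down fails. Note first that $x^{-0.73w/\ln(x)}=e^{-0.73w}$, a factor independent of $x$. In your sum the term $j=w$ contributes $\binom{bx}{w}m(w,w)=\binom{bx}{w}\approx (bx)^w/w!$, so after you discard the $1/j!$ (your step ``$\binom{bx}{j}\le(bx)^j/j!\le(bx)^w(bx)^{j-w}$'' does exactly that) the $j=w$ summand alone is already $(bx)^w\cdot 1$, which exceeds the target $(bx)^we^{-0.73w}$; the entire saving must come from $1/w!\le(e/w)^w$ together with control of the $j<w$ terms, not from $j$ being small. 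Relatedly, your heuristic that the effective $j$ is $O(w/\ln x)$ is wrong: for $bx\ge20$ the sum is dominated by $j$ near $w$.

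Moreover, the bound in (a) is razor-thin at the corner, which your plan of ``bound the tail trivially plus a short finite table'' cannot absorb. For $(b,x,w)=(4,5,5)$ one computes $k(20,5)=\binom{20}{5}+8\binom{20}{4}+21\binom{20}{3}+22\binom{20}{2}+140=82524$, while the claimed bound is $20^5e^{-3.65}\approx 83168$ --- a margin under one percent, so any slack lost in estimating $m(j,w)$ or the tail destroys the inequality there. This is precisely why the paper's induction for the binomial inequality must exclude $w=5$, $x\le 7$ and $(w,x)=(6,5)$ and verify statement (a) directly in those cases. To repair your argument you would need to (i) retain the $1/j!$ throughout and prove a uniform bound of the shape $\sum_{j\le w}m(j,w)/\bigl(j!\,(bx)^{w-j}\bigr)\le e^{-0.73w}$ for $bx\ge 20$, $w\ge 5$, identifying and checking by hand the tight small cases; and (ii) for part (b) supply the large-$b$, moderate-$w$ range (say $6\le b$, $5\le w\le 20$) where neither the polynomial-in-$b$ asymptotics nor the subexponential-in-$w$ growth of $k(b,w)$ gives uniform control --- this is exactly the range the paper covers with Olsson's $\al^w$ refinement, an input your sketch does not replace. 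Your submultiplicativity bootstrap for $b\in\{4,5\}$ is fine as far as it goes.
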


\begin{proof}
We first claim that
$$\binom{x+w-1}{w}\le x^{w-0.73w/\ln(x)}$$
for all $w\ge5$, $x\ge5$, unless $w=5$, $x\le7$, or $(w,x)=(6,5)$. Indeed, this
holds for $w=5$, $x\ge8$, by a direct check, as well as for the cases $w=6$,
$x=6,7$, and $(w,x)=(7,5)$. Now
$$\binom{x+w}{w+1}=\frac{x+w}{w+1}\binom{x+w-1}{w}$$
and
$$\frac{x+w}{w+1}=1+\frac{x-1}{w+1}\le 1+\frac{x-1}{6}\le \frac{x}{3}
  \le x^{1-1/\ln(x)}$$
for $w,x\ge5$, so we conclude by induction on $w$.
\par
Now consider (a). Clearly the number $k(bx,w)$ of $bx$-tuples of partitions of
$w$ can be split up as the sum of $k(b,i_1)\cdots k(b,i_x)$ over all
compositions $(i_1,\ldots,i_x)\mdash w$ of $w$ of length~$x$. Thus by
Lemma~\ref{lem:k(a,b)}(a) we have
$$k(bx,w)=\sum_{(i_1,\ldots,i_x)\mdash w}k(b,i_1)\cdots k(b,i_x)
  \le\sum_{(i_1,\ldots,i_x)\mdash w}b^w=\binom{x+w-1}{w}b^w$$
and the assertion follows by our previous claim except in the excluded cases
$w=5$, $x\le7$, or $(w,x)=(6,5)$. In those four cases the inequality can be
verified directly.  \par
For part~(b), we use that $k(x,w)\le\binom{x+w-1}{w}\al^w$ by
\cite[p.~43]{Ol84}, where $\al=(1+\sqrt{5})/2$. Now the first part of the
proof can be recycled to show that even
$$\binom{b+w-1}{w}\le b^{w-0.952\,w/\ln(b)}$$
unless $b=4$, $w\le15$, or $b=5$, $w\le10$, or $b=6$, $w\le9$, or $b=7$,
$w\le8$.
As $\ln(\al)<0.482$ this proves the assertion under those conditions.
The finitely remaining cases can be checked directly.
\end{proof}

\begin{lem}   \label{lem:lem3}
 Let $\ell\ge5$ be a prime, $a\ge1$ and $d\le\sqrt{\ell^a-1}$ be a divisor of
 $\ell-1$. Then for all multiples $w$ of $\ell$,
 $$k(d+(\ell^a-1)/d,w)\le\begin{cases}
   \ell^{aw-0.83\,w\log_\ell d}& \text{when $d>1$},\\
   \ell^{aw-0.9\,w/\ln(\ell)}& \text{when $d=1$, $a\ge2$,}\\
   \ell^{aw-0.57w/\ln(\ell)}& \text{when $d=a=1$},
 \end{cases}$$
 except when $d\le2$, $\ell^a=w=5$, or $(\ell^a,d,w)=(25,1,5)$.
\end{lem}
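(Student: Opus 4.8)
The plan is to reduce Lemma~\ref{lem:lem3} to the estimates already established in Lemma~\ref{lem:k(b,w)}, by writing the base $b:=d+(\ell^a-1)/d$ in the form $b = \ell^a \cdot (b/\ell^a)$ and tracking how the factor $b/\ell^a$ compares to $1$. First I would observe that since $d\mid \ell-1$ and $d\le\sqrt{\ell^a-1}$, both summands $d$ and $(\ell^a-1)/d$ are at least $1$, so $b\le \ell^a$; in fact $b\le \ell^a$ with the gap controlled by $d$, since $d + (\ell^a-1)/d$ is (for fixed product $\ell^a-1$) decreasing in $d$ on $[1,\sqrt{\ell^a-1}]$ and maximal at $d=1$, where $b=\ell^a$. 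So in all cases $b\le \ell^a$, and $b^w\le \ell^{aw}$; the work is to extract the extra saving $\ell^{-cw/\ln\ell}$ or $\ell^{-c\,w\log_\ell d}$.

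The main case is $d>1$. Here I would apply Lemma~\ref{lem:k(b,w)}(b) to $k(b,w)\le b^{w-0.47w/\ln(b)}$ (valid once $b\ge4$ and $w$ is large enough, with the small exceptions noted there), then bound $b^{w-0.47w/\ln b} = b^w\cdot b^{-0.47w/\ln b} = b^w e^{-0.47w}$. Combining with $b^w\le\ell^{aw}$ gives $k(b,w)\le\ell^{aw}e^{-0.47w}$, and it remains to check $e^{-0.47w}\le \ell^{-0.83w\log_\ell d} = d^{-0.83w}$, i.e.\ $d^{0.83}\le e^{0.47}$. That fails for $d\ge2$, so the crude estimate $b^w\le\ell^{aw}$ is too lossy and one must instead keep $b$ genuinely smaller than $\ell^a$. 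The fix: when $d\ge 2$ one has $b = d + (\ell^a-1)/d \le (\ell^a-1)/d + d \le \ell^a/d\cdot(1 + d^2/\ell^a)\le \ell^a/d \cdot (2)$ is still not tight enough either; the clean bound is $b\le \ell^a/d + d$, and since $d\le\sqrt{\ell^a-1}<\sqrt{\ell^a}$ we get $b\le \ell^a/d + \sqrt{\ell^a}$, hence $\log_\ell b \le a - \log_\ell d + o(1)$. Feeding $b^w\le \ell^{(a-\log_\ell d + \varepsilon)w}$ into $k(b,w)\le b^{w}e^{-0.47w}$ and absorbing the $e^{-0.47w}$ and the $\varepsilon w$ terms yields the claimed exponent $aw - 0.83w\log_\ell d$ once $d$ (equivalently $\ell$) is not too small; the constant $0.83<1$ leaves room for the error terms. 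The finitely many residual pairs $(\ell,a,d,w)$ with $d$ small or $w$ small — which is exactly where the stated exceptions $d\le2$, $\ell^a=w=5$, $(\ell^a,d,w)=(25,1,5)$ live — are checked by hand, using Lemma~\ref{lem:k(a,b)}(a) and direct enumeration.

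For the case $d=1$ we have $b=1+(\ell^a-1)=\ell^a$ exactly, so $k(b,w)=k(\ell^a,w)$ and the two sub-cases follow directly from Lemma~\ref{lem:k(b,w)}(b): when $a\ge2$ we have $b=\ell^a\ge 25\ge 4$, and $k(\ell^a,w)\le (\ell^a)^{w-0.47w/\ln(\ell^a)} = \ell^{aw - 0.47w/\ln\ell}$, which is stronger than the required $\ell^{aw-0.9w/\ln\ell}$ — wait, $0.47<0.9$, so this direction needs the sharper constant; here I would instead invoke the intermediate bound from the proof of Lemma~\ref{lem:k(b,w)}, namely $\binom{b+w-1}{w}\le b^{w-0.952w/\ln b}$ together with $k(\ell^a,w)\le\binom{\ell^a+w-1}{w}\alpha^w$ and $\ln\alpha<0.482$, giving exponent $aw - 0.952w/\ln\ell + 0.482w/\ln\ell < aw - 0.9w/(a\ln\ell)\cdot a$ after noting $1/\ln(\ell^a)=1/(a\ln\ell)$ — one must be careful that the saving is measured against $\ln(\ell^a)=a\ln\ell$ in Lemma~\ref{lem:k(b,w)} but against $\ln\ell$ in the statement, so for $a\ge2$ the nominal constant improves by a factor $a$, comfortably covering $0.9$. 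For $d=a=1$, $b=\ell\ge5$, and $w$ a multiple of $\ell\ge5$ so $w\ge5$; Lemma~\ref{lem:k(b,w)}(b) applies directly except for $\ell=5,w\le7$, but $w$ is a multiple of $5$ so $w=5$ is the only small case, giving exactly the exception $\ell^a=w=5$, and for $\ell\ge7$ or $w\ge10$ the bound $k(\ell,w)\le \ell^{w-0.952w/\ln\ell}\alpha^w\le\ell^{w-0.57w/\ln\ell}$ holds since $0.952-0.482>0.57$.

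The hard part will be the bookkeeping in the $d>1$ case: getting a bound on $\log_\ell b$ that is tight enough to produce the constant $0.83$ while still absorbing the multiplicative error from the $+\sqrt{\ell^a}$ correction term and the additive $e^{-0.47w}$ factor, and then pinning down precisely which small triples $(\ell^a,d,w)$ escape the argument so that the exception list in the statement is exactly right. Everything else is either a direct application of Lemma~\ref{lem:k(b,w)} or a finite check.
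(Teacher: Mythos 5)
Your treatment of the main case $d>1$ follows the paper's route: apply Lemma~\ref{lem:k(b,w)}(b), note $b^{-0.47w/\ln b}=e^{-0.47w}$, and recover the loss from replacing $b^w$ by $\ell^{aw}$ through the bound $b\le (\ell^a/d)\cdot c$ with $c$ close to $1$. That is exactly what the paper does (it uses $c=2$ for $d\ge4$ and $c=1.62$ for $d=2,3$), and your bound $b\le(\ell^a/d)(1+d/\sqrt{\ell^a})$ is serviceable for the same purpose. One caveat: the residual set is not ``finitely many pairs'' as you assert --- $d=2$ alone covers infinitely many $(\ell,a,w)$ --- so you must actually carry out the constant chase $\ln c\le 0.17\ln d+0.47$ for $d=2,3$ rather than defer it to a finite check; with your value of $c$ this works except for $\ell^a=5$, which is where the genuine exceptions and the $b=4$ exclusions of Lemma~\ref{lem:k(b,w)}(b) live.

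Both $d=1$ sub-cases, however, contain genuine errors in the arithmetic, and they are not repairable by the mechanism you propose. For $d=1$, $a\ge2$, your chain $k(\ell^a,w)\le\binom{\ell^a+w-1}{w}\al^w\le(\ell^a)^{w-0.952w/\ln(\ell^a)}\al^w$ yields exactly $\ell^{aw}e^{-0.952w}e^{0.482w}=\ell^{aw-0.47w/\ln\ell}$, and $0.47<0.9$, so the required bound does not follow. Your claimed ``factor of $a$'' improvement is illusory: the saving $b^{-cw/\ln b}=e^{-cw}$ is an absolute exponential factor, so rewriting it to base $\ell$ gives $\ell^{-cw/\ln\ell}$ independently of $a$. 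The paper instead exploits $a\ge2$ through the multiplicative decomposition of Lemma~\ref{lem:k(b,w)}(a): writing $\ell^a=\ell\cdot\ell^{a-1}$, summing over compositions of $w$ into $\ell^{a-1}$ parts, and bounding each factor $k(\ell,i_j)\le\ell^{i_j}$ trivially, which avoids the costly $\al^w$ term entirely and leaves only the binomial saving $e^{-0.952w}\le e^{-0.9w}$. For $d=a=1$ you write that $0.952-0.482>0.57$; in fact $0.952-0.482=0.47$, so Lemma~\ref{lem:k(b,w)}(b) again gives only $\ell^{w-0.47w/\ln\ell}$. Here the paper proves the strictly sharper binomial estimate $\binom{x+w-1}{w}\le x^{w-1.052w/\ln x}$, valid because the hypothesis $\ell\mid w$ forces $w\ge\ell=x$ (a regime your argument never uses), and $1.052-0.482=0.57$ is precisely where the stated constant comes from. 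Without these two additional ideas the $d=1$ cases of the lemma are not established.
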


\begin{proof}
Set $b:=d+(\ell^a-1)/d$. First assume that $d>1$ and we are not in one of the
excluded cases in Lemma~\ref{lem:k(b,w)}(b). Then we have
$$k(b,w)\le b^{w-0.47w/\ln(b)}=b^w/\ell^{0.47w/\ln(\ell)}.$$
As $b=(d^2+\ell^a-1)/d\le 2\ell^a/d$ this shows that
$$k(b,w)\le(2\ell^a/d)^w\,\ell^{-0.47w/\ln(\ell)}
  =\ell^{w(a+\log_\ell2-\log_\ell d-0.47/\ln(\ell))}
  \le\ell^{aw-0.83w\log_\ell d}$$
when $d\ge4$. For $d=2,3$, actually $b\le 1.62\,\ell^a/d$ and we can conclude
as before.
\par
In the excluded case $b=4$, $6\le w\le10$, we necessarily have $\ell^a=5$,
$d=2$, and the claim is checked directly. Similarly, when $b=5$, $w\le7$ then
$\ell^a=7$, $d=2$, and again the inequality holds.
\par
For $d=1$ with $a>1$ we argue as in the proof of Lemma~\ref{lem:k(b,w)}(a),
avoiding the case that $(\ell^a,d,w)=(25,1,5)$,
while for $a=1$ we recycle the arguments in the proof of part~(b) of that
result, and first show that even
$$\binom{x+w-1}{w}\le x^{w-1.052w/\ln(x)}$$
for all $w\ge x\ge11$, as well as for $w\ge10$ when $x=7$ and for $w\ge15$ when
$x=5$. The finitely remaining cases are again checked directly.
\end{proof}
 
For integers $\ell,a,w\ge1$ and a divisor $d$ of $\ell-1$ let us define
$$k(\ell,a,d,w)
  := \sum_\bw k(d+(\ell^a-1)/d,w_0)\prod_{i\ge1}k((\ell^a-\ell^{a-1})/d,w_i)$$
where the sum runs over all \emph{$\ell$-compositions of $w$}, that is, all
tuples $\bw=(w_0,w_1,\ldots)$ of non-negative integers satisfying
$\sum w_i\ell^i =w$. We write $p_\ell(w)$ for the number of such.

\begin{lem}   \label{lem:k(B)}
 Let $\ell\ge5$, let $a,w\ge1$ with $\ell|w$, let $d<\ell^a-1$ be a divisor
 of $\ell-1$. Then
 $$k(\ell,a,d,w)\le\begin{cases}
     p_\ell(w)\,\ell^{aw-0.83\,w\ln(d)/\ln(\ell)}& \text{if }d>1,\\
     p_\ell(w)\,\ell^{aw-0.9\,w/\ln(\ell)}& \text{if }d=1,\,a\ge2,\\
     p_\ell(w)\,\ell^{w-0.57\,w/\ln(\ell)}& \text{if }d=a=1.\end{cases}$$
unless $(\ell^a,d,w)=(5,1,5)$.
\end{lem}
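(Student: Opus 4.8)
The plan is to bound $k(\ell,a,d,w)$ term by term in the defining sum, reducing the estimate for the full sum to the single-partition estimates already established in Lemma~\ref{lem:lem3} and Lemma~\ref{lem:k(b,w)}. First I would fix an $\ell$-composition $\bw=(w_0,w_1,\dots)$ of $w$ and control the contribution
$$k(d+(\ell^a-1)/d,w_0)\prod_{i\ge1}k((\ell^a-\ell^{a-1})/d,w_i).$$
For the factor indexed by $i=0$, Lemma~\ref{lem:lem3} gives (when $\ell\mid w_0$, or more carefully, using multiplicativity of $k(b,-)$ from Lemma~\ref{lem:k(a,b)}(b) to split off the $\ell$-divisible part) a bound of shape $\ell^{aw_0-cw_0}$ for the appropriate constant $c$ depending on whether $d>1$, or $d=1<a$, or $d=a=1$. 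For the factors with $i\ge1$, the second argument $w_i$ is the multiplicity at level $\ell^i$, so it effectively contributes $w_i\ell^i$ to $w$; here I would use Lemma~\ref{lem:k(b,w)}(a) with base $b=(\ell^a-\ell^{a-1})/d = (\ell-1)\ell^{a-1}/d$, which is of the form $bx$ with $x\ge5$ once $\ell\ge5$, to get a bound $(\ell^a-\ell^{a-1})^{w_i}/d^{w_i}\cdot(\text{savings})$, and then crudely $(\ell^a-\ell^{a-1})^{w_i}\le \ell^{aw_i}$ absorbs this cleanly into $\ell^{a\cdot w_i\ell^i}\le \ell^{a(w-w_0)}$ with room to spare.

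Multiplying these, a single $\ell$-composition contributes at most $\ell^{aw}\cdot\ell^{-cw_0}$ where the exponent loss $cw_0$ comes entirely from the $i=0$ factor. The subtlety is that $w_0$ need not equal $w$, so to get a uniform bound $\ell^{aw-c'w}$ I need the loss to scale with $w$, not merely with $w_0$. Here I would observe that the levels $i\ge1$ already carry a factor $\ell^i\ge\ell$, so the "missing" contribution $w-w_0=\sum_{i\ge1}w_i\ell^i$ is bounded by $\ell^{-1}$ times... no — rather, I would argue that in the regime where $w_0$ is small the number of remaining compositions and the sizes of the other parts are correspondingly constrained, and the slack in the crude bound $(\ell^a-\ell^{a-1})^{w_i}\le\ell^{aw_i}$ versus $\ell^{aw_i\ell^i}$ (a factor of $\ell^{aw_i(\ell^i-1)}$ of unused room) more than compensates. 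Concretely: write each contribution as $\ell^{aw}$ times $\ell^{-cw_0}\prod_{i\ge1}(\text{factor}<1)$; then crucially $w_0\ge w/\ell^{r}$ would fail, so instead I bound $\ell^{-cw_0}\le 1$ always, and extract the genuine saving from the $i\ge1$ factors when $w_0$ is small, using $d\le\sqrt{\ell^a-1}$ to control $b=(\ell-1)\ell^{a-1}/d\ge \sqrt{\ell^a-1}\cdot(\ell-1)/\ell\ge 5$ so Lemma~\ref{lem:k(b,w)}(a) applies with $x$ large. Summing over all $p_\ell(w)$ compositions then yields the stated bound $p_\ell(w)\,\ell^{aw-c'w}$.

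The main obstacle I anticipate is exactly this bookkeeping of where the exponential saving comes from across the two cases $w_0=w$ (all weight at the bottom level) versus $w_0$ small (weight pushed to higher levels): in the first case the saving is handed over directly by Lemma~\ref{lem:lem3}, while in the second it must be harvested from the higher-level factors, and the two mechanisms give the constants $0.83/\ln\ell$, $0.9/\ln\ell$, $0.57/\ln\ell$ only after checking that the weaker of the two always dominates — and that the finitely many boundary cases ($\ell$ small, $a=1$, $w_i$ small, the pesky $(\ell^a,d,w)=(5,1,5)$ and $d\le 2,\ell^a=w=5$ exclusions inherited from Lemma~\ref{lem:lem3}) do not spoil it. I would handle those by the same "check directly" device used throughout: list the offending $(\ell,a,d,w)$ with $\ell\in\{5,7\}$, $a=1$, $w\le$ some small bound, and verify $k(\ell,a,d,w)\le p_\ell(w)\ell^{aw-c'w}$ numerically, noting $(5,1,5)$ is the single genuine exception already flagged in the statement.
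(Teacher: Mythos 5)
Your proposal follows essentially the same route as the paper's proof: bound the $i=0$ factor by Lemma~\ref{lem:lem3}, bound each $i\ge1$ factor crudely by $b_1^{w_i}$, exploit the fact that the higher levels contribute only $\sum_{i\ge1}w_i\le(w-w_0)/\ell$ to the exponent (the "unused room" you correctly identify as compensating when $w_0<w$), and multiply by the number $p_\ell(w)$ of $\ell$-compositions. The only simplifications you miss are that $w_0$ is automatically a multiple of $\ell$ (since $\ell\mid w$ and each term $w_i\ell^i$ with $i\ge1$ is divisible by $\ell$), so Lemma~\ref{lem:lem3} applies to the $i=0$ factor without any splitting device, and that the exceptional summands inherited from Lemma~\ref{lem:lem3} (those with $w_0=5$ and $\ell^a\in\{5,25\}$) are handled by a direct estimate inside the sum rather than by excluding further tuples $(\ell,a,d,w)$.
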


\begin{proof}
Observe that $\sum_{i\ge1}w_i\le(w-w_0)/\ell$, and $w_0$ is a multiple of
$\ell$ (as $\ell$ divides $w$) for all $\ell$-compositions $(w_0,w_1,\ldots)$
of $w$.
\par
We first treat the case that $d=1$. Then by Lemmas~\ref{lem:lem3}
and~\ref{lem:k(a,b)}(a) we get, with $c=0.9$ when $a>1$ and $c=0.57$ when
$a=1$, that $k(\ell,a,1,w)$ is bounded above by
$$\sum_{\bw} k(\ell^a,w_0) \prod_{i\ge1}k(\ell^a,w_i)
  \le\sum\ell^{aw_0-cw_0/\ln(\ell)+a(w-w_0)/\ell}
     +\sum k(\ell^a,w_0)\ell^{a(w-w_0)/\ell},$$
where the first sum ranges over $\ell$-compositions of $w$ with
$w_0\ge\ell$ (respectively $w_0\ge10$ when $\ell=5$, $a\le2$), and the second
one over those with $w_0=0$ ($w_0\le5$ respectively). Now note that
$a(w_0+(w-w_0)/\ell)-cw_0/\ln(\ell)\le aw-cw/\ln(\ell)$, and
$aw/\ell\le aw-cw/\ln(\ell)$ as well as
$$k(25,5)\,5^{(w-5)/5}\le 5^{2w-0.91w/\ln(5)}\quad\text{ and }$$
$$k(5,5)\,5^{(w-5)/5}\le 5^{w-0.57w/\ln(5)}$$
for all $w\ge10$. Thus we can bound all summands above by
$\ell^{w-cw/\ln(\ell)}$ to find
$$k(\ell,a,1,w)\le p_\ell(w)\, \ell^{aw-cw/\ln(\ell)}$$
unless $w=\ell=5$, $a\le2$. For $a=2$ the inequality still holds, while it
fails for $a=1$.
\par
Now consider the case when $d>1$. If $a=1$ then we may assume that
$d\le\sqrt{\ell-1}$ as the value of $b:=d+(\ell^a-1)/d$ is invariant with
respect to replacing $d$ by $(\ell-1)/d$. In particular, we thus can take
$d\le\sqrt{\ell^a-1}$ in all cases. By Lemma~\ref{lem:lem3} we obtain
$$\begin{aligned}
  k(\ell,a,d,w)\le &\sum_{\bw} k(d+(\ell^a-1)/d,w_0)
     \prod_{i\ge1}k((\ell^a-\ell^{a-1})/d,w_i)\\
  \le&\sum \ell^{aw_0-0.79w_0\log_\ell d}(\ell^a/d)^{(w-w_0)/\ell}
     +\sum k(b,w_0)(\ell^a/d)^{(w-w_0)/\ell},
\end{aligned}$$
where again the first sum ranges over the $\ell$-compositions with
$w_0\ge\ell$ (respectively $w_0\ge10$ when $\ell^a=5$), and the second one over
those with $w_0\le5$. Now note that
$$aw_0-0.83w_0\log_\ell d+(w-w_0)(a-\log_\ell d)/\ell\le aw-0.83w\log_\ell d
  \quad\text{for }w_0\ge\ell,$$
and also $w/(a-\log_\ell d)/\ell\le aw-0.83w\log_\ell d$, as well as
$$k(4,5)\,(5/2)^{(w-5)/5}\le 5^{w-0.83w\log_5 2}\quad\text{ for }w\ge10$$
(for the summands with $w_0=5$ in the case $\ell^a=5$, $d=2$), so we obtain
$$k(\ell,a,d,w)\le p_\ell(w)\,\ell^{aw-0.83\,w\log_\ell d},$$
as claimed. When $\ell^a=w=5$ and $d=2$, then
$$k(5,1,2,5)=k(4,5)+k(2,1)=254<2\cdot 5^{5(1-0.83\log_5 2)}$$
by explicit calculation.
\end{proof}

We will also need the following lower bound.

\begin{lem}   \label{lem:lower}
 Let $b\ge c\ge w\ge1$. Then $k(b,w)\ge \binom{c}{w}y^w$, where
 $y=\flr{b/c}$.
\end{lem}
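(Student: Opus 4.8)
The plan is to produce many multipartitions of $w$ with $b$ parts by a crude ``stuffing'' argument: we restrict to a fixed subset of the parts and build large partitions inside them. Concretely, fix $c$ of the $b$ available coordinates (this is where the binomial $\binom{c}{w}$ will enter, once we have decided which $w$ of these $c$ coordinates to use) and set $y=\lfloor b/c\rfloor$. First I would observe that for each of the $w$ chosen coordinates we may independently assign any partition of size a fixed small value; the point is that the number of partitions of a fixed integer $m$ is at least $y$ as soon as $b/c$ is large enough, but here we do not even need that — we only need partitions with $\ge y$ choices, which we get for free by allowing a single part of size between $1$ and $y$.

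More precisely, here is the construction I would carry out. Choose a $w$-element subset $S$ of a fixed $c$-element set of coordinates; there are $\binom{c}{w}$ such choices. For each coordinate in $S$ pick an integer in $\{1,2,\dots,y\}$, and let the partition at that coordinate be the single-row partition of that size; set all other coordinates (both those among the $c$ not in $S$, and the remaining $b-c$ coordinates) to the empty partition. This yields a $b$-multipartition whose total size is the sum of the $w$ chosen integers; I would then note that as the chosen integers range over $\{1,\dots,y\}^S$, their sum takes every value from $w$ up to $wy$, and in particular — wait, that over-counts; instead I would simply fix all but possibly adjust: the cleanest route is to let one distinguished coordinate of $S$ carry a single part of size between $1$ and $y$ absorbing the slack, while the other $w-1$ coordinates each carry a single part of size exactly $1$; then varying the $y$-fold choice on the distinguished coordinate together with the $y^{w-1}$ independent choices on the rest does not quite give total $w$ either.

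So the correct bookkeeping, which I would write out, is: assign to each of the $w$ coordinates in $S$ a single part whose size lies in $\{1,\dots,y\}$, subject to the sizes summing to $w$. Since $y\ge 1$ and there are $w$ summands each at least $1$, such tuples exist, but I want $y^w$ of them, so instead I would not fix the total at all — rather I would note $k(b,w)=\sum_{m\le w} k(b,m)$-type identities are not available, so the honest argument is: $k(b,w)\ge$ (number of $b$-multipartitions of $w$ supported on $S$), and among these we have at least those where the first $w-1$ coordinates of $S$ each get a part of size $1$ and the last gets a part of size $w-(w-1)=1$; that gives only one. The genuine source of the $y^w$ factor must therefore be partitions of a larger fixed integer: take each coordinate in $S$ to be a partition of the fixed integer $w$ itself into at most ... this is getting circular.

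Let me restate the plan correctly in two paragraphs. \emph{Step 1.} I would use that the partitions of $n$ into parts all $\le y$ number at least $y^{\,?}$ for suitable $n$; precisely, the partitions of $wy$ whose Young diagram fits in a $w\times y$ box are in bijection with lattice paths and number $\binom{w+y}{w}\ge y^{?}$ — still not clean. \emph{Step 2 (the one I actually expect to work).} The key combinatorial fact is that the number of partitions $\lambda$ with exactly $w$ parts, each part in $\{1,\dots,y\}$, counted \emph{as multipartitions with the part-assignment data}, is exactly $y^w$ when we record which part goes to which coordinate — but a single partition does not remember coordinates. The resolution: work with the $b$ coordinates. Put, at each of $w$ fixed coordinates, a \emph{single-part} partition of size in $\{1,\dots,y\}$, and \textbf{allow the total to be anything}; then ask instead for $k(b,N)$ with $N$ free. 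This shows the statement must really be read as: among $b$-multipartitions of $w$, the ones with at most one non-empty...

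Given the dead ends, the honest final plan is the following, and it is short. Fix a $c$-subset of coordinates and within it a $w$-subset $S$; on each coordinate of $S$ place a single part, with the $w$ part-sizes forming a composition of... no — forming an arbitrary map $S\to\{1,\dots,y\}$ \emph{with no constraint}, which forces total size between $w$ and $wy$; to land exactly at $w$, reinterpret: place on the $j$-th coordinate of $S$ a single part of size $1$ for $j<w$ and on the last coordinate a single part of size $1$; that is one multipartition, useless. \textbf{Therefore the intended argument is via partitions of $w$ with many parts:} the number of partitions of $w$ into at most $y$ parts, placed one per coordinate over a $w$-subset of a $c$-set, with sizes summing to $w$ — this equals the number of ways to write $w=\sum_{j\in S} a_j$ with $1\le a_j\le y$, i.e.\ the coefficient of $x^w$ in $(x+\dots+x^y)^w$, which for $y\ge 1$ is at least $1$, and the binomial $\binom{c}{w}$ counts the choices of $S$. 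The $y^w$ then comes from, in addition, choosing for each $j\in S$ independently a partition of $a_j$ — no.

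\textbf{I concede the clean write-up I would commit to:} For each function $f\colon\{1,\dots,w\}\to\{1,\dots,y\}$ form the partition $\mu(f)$ of $|f|:=\sum f(i)$ equal to $(f(1),\dots,f(w))$ reordered; distinct $f$ can give the same $\mu$, so this over-counts, but: assign instead to each of $w$ chosen coordinates (chosen in $\binom{c}{w}$ ways from the first $c$) the \emph{single value} $f(i)$ as a one-part partition; now the tuple $(\text{one-part partition } f(1),\dots,\text{one-part partition }f(w),\emptyset,\dots)$ is a $b$-multipartition of $\sum f(i)$, and these $y^w$ tuples are pairwise distinct for distinct $f$ because the coordinates are ordered. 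Hence $k(b,\sum f(i))\ge y^w$ for the generic value, and summing over the $\binom cw$ choices of which $w$ of the first $c$ coordinates to use gives the bound. The \textbf{main obstacle} is exactly this bookkeeping — matching "sizes summing to $w$'' with "$y^w$ free choices''; I expect the author pins the total by a telescoping/substitution making the $\binom{c}{w}$ absorb the compositions of $w$ into parts $\le y$ while the $y^w$ absorbs an independent per-coordinate choice, and I would follow that once I see which of the two factors carries the size constraint.
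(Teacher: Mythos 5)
There is a genuine gap. Your final ``committed'' construction assigns to each of $w$ chosen coordinates a single part of size $f(i)\in\{1,\dots,y\}$; the resulting $b$-multipartition has total size $\sum_i f(i)$, which equals $w$ only for $f\equiv 1$. So for each choice of the $w$ coordinates you actually produce just one multipartition of $w$, giving only $k(b,w)\ge\binom{c}{w}$, and you explicitly concede that you cannot see which factor carries the size constraint and which carries the $y^w$ free choices. All of your attempts place the $y$-fold freedom in the \emph{sizes} of the parts, which inevitably breaks the constraint that the sizes sum to $w$.

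The missing idea is that $y^w$ counts positions, not sizes. Write $b=cy+r$ with $0\le r\le c-1$ and split the $b$ coordinates into $c$ disjoint blocks, $c-1$ of them of size $y$ and one of size $y+r$. Then $k(b,w)=\sum k(y,i_1)\cdots k(y,i_{c-1})\,k(y+r,i_c)$, the sum over compositions $(i_1,\dots,i_c)$ of $w$ into $c$ nonnegative parts. Keep only the $\binom{c}{w}$ compositions having exactly $w$ entries equal to $1$ and the rest $0$ (this is where the binomial carries the size constraint). Each surviving summand is a product of $w$ factors, each equal to $k(y,1)=y$ or $k(y+r,1)=y+r\ge y$, because a multipartition of $1$ on a block of $m$ coordinates is just a choice of which coordinate receives the single box, giving $m$ options. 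Hence each summand is at least $y^w$: the factor $y^w$ records, for each of the $w$ selected blocks, \emph{which} of its at least $y$ coordinates carries the unique box, a choice that never changes the total size. This is exactly the bookkeeping your proposal was circling around without landing on.
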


\begin{proof}
We have $b=cy+r$ for some $0\le r\le c-1$. In the expression
$$k(b,w)=k(cy+r,w)
  =\sum_{(i_1,\ldots,i_c)\mdash w}k(y,i_1)\cdots k(y,i_{c-1})k(y+r,i_c)$$
we only consider those summands indexed by compositions $(i_1,\ldots,i_c)$ of
$w$ with exactly $w$ entries equal to~1 and all others~0. Since there are
exactly $\binom{c}{w}$ of those, this yields the stated lower bound.
\end{proof}

\subsection{The conjecture for $\GL_n(q)$ and $\GU_n(q)$}

To deal with the general linear and unitary groups, the following
elementary observation will allow us to estimate the number of conjugacy
classes in defect groups:

\begin{lem}   \label{lem:k(D)}
 Let $m\ge1$, $i\ge0$ and set $D_{i,m}:=C_m\wr C_\ell\wr\cdots\wr C_\ell$ the
 iterated wreath product with $i$ factors of the cyclic group $C_\ell$ of
 order~$\ell$. Then
 $$k(D_{i,m})\ge m^{\ell^i}/\ell^{(\ell^i-1)/(\ell-1)}$$
 and
 $$k(D_{i,m}')\ge m^{\ell^i-1}/\ell^{(\ell^i-1)/(\ell-1)+i-1}\qquad
   \text{for $i\ge1$}.$$
\end{lem}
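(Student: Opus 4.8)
The plan is to argue directly from the wreath product structure, using one elementary principle: if a finite group $G$ has an abelian normal subgroup $V$ with $G/V\cong H$, then the $G$-conjugacy classes lying inside $V$ are precisely the $H$-orbits on $V$ (the conjugation action of $G$ on $V$ kills $V$, since $V$ is abelian), and since each such orbit has size at most $|H|$ there are at least $|V|/|H|$ of them; in particular $k(G)\ge|V|/|H|$. Both bounds will come from applying this principle to a suitable abelian subgroup of the base group.

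For the first bound I would write $D_{i,m}=C_m\wr P_i$ with $P_i=C_\ell\wr\cdots\wr C_\ell$ ($i$ factors), regarded as a transitive permutation group on a set $\Omega$ of size $\ell^i$ (with $P_0=1$ when $i=0$). Then $V:=C_m^{\Omega}$ is an abelian normal subgroup with quotient $P_i$, and $|P_i|=\ell^{(\ell^i-1)/(\ell-1)}$ from the recursion $|P_i|=\ell\,|P_{i-1}|^{\ell}$. The principle above then gives $k(D_{i,m})\ge|V|/|P_i|=m^{\ell^i}/\ell^{(\ell^i-1)/(\ell-1)}$.

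For the second bound I would assume $i\ge1$ and take $V_0\le V$ to be the augmentation submodule $\{(x_\omega)_\omega:\prod_\omega x_\omega=1\}$, of index $m$ in $V$ and normal in $D_{i,m}$. Since $P_i$ is transitive on $\Omega$, one checks $[V,P_i]=V_0$ (the commutators $[(x,1,\dots,1),p]$ with $p$ sending the first point to an arbitrary one generate $V_0$, while conjugation by $P_i$ preserves the product of coordinates), so $V_0\le D_{i,m}'$. The conjugation action of $D_{i,m}'$ on $V_0$ again kills $V$, hence factors through the image of $D_{i,m}'$ in $D_{i,m}/V\cong P_i$, namely $P_i'$; therefore the $D_{i,m}'$-classes inside $V_0$ are the $P_i'$-orbits on $V_0$, giving $k(D_{i,m}')\ge|V_0|/|P_i'|$. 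Finally $|P_i'|=|P_i|/\ell^i=\ell^{(\ell^i-1)/(\ell-1)-i}$, because the abelianization of $P_i$ is $C_\ell^{\,i}$ (induct on $i$ via $(A\wr C_\ell)^{\mathrm{ab}}\cong A^{\mathrm{ab}}\times C_\ell$, valid since $C_\ell$ permutes its $\ell$ base factors transitively). Hence
$$k(D_{i,m}')\ \ge\ m^{\ell^i-1}/\ell^{(\ell^i-1)/(\ell-1)-i}\ \ge\ m^{\ell^i-1}/\ell^{(\ell^i-1)/(\ell-1)+i-1},$$
the last step using $i\ge1$; this is in fact slightly stronger than the stated inequality.

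I do not expect a genuine obstacle. The two points needing (routine) care are the structural facts about $P_i$: that transitivity of $P_i$ on $\Omega$ forces $[V,P_i]=V_0\subseteq D_{i,m}'$, and the identification of the abelianization $P_i/P_i'$ with $C_\ell^{\,i}$, which is the standard description of the abelianization of an iterated wreath product. An alternative would be a direct induction on $i$ through $D_{i,m}=D_{i-1,m}\wr C_\ell$ together with the estimate $k(K\wr C_\ell)\ge k(K)^{\ell}/\ell$, but the uniform "count orbits in the base group" argument settles both parts at once and with room to spare.
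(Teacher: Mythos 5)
Your proof is correct, and for the second inequality it even yields the slightly stronger bound $m^{\ell^i-1}/\ell^{(\ell^i-1)/(\ell-1)-i}$; but it takes a genuinely different route from the paper. The paper argues by induction on $i$ through the recursion $D_{i,m}=D_{i-1,m}\wr C_\ell$: for the first bound it notes that the top $C_\ell$ has orbits of length at most $\ell$ on the conjugacy classes of the base group $D_{i-1,m}^{\ell}$, giving $k(D_{i,m})\ge k(D_{i-1,m})^{\ell}/\ell$; for the second it invokes Olsson's description of the derived subgroup of a wreath product \cite[Lemma~1.4]{Ol76} to see that $D_{i,m}'$ is a normal subgroup of index $m\ell^{i}$ in $D_{i-1,m}^{\ell}$, and then combines the first bound with $k(N)\ge k(G)/[G:N]$. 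You instead never leave the bottom abelian layer: you count orbits of $P_i$ on $V=C_m^{\ell^i}$ and of $P_i'$ on the augmentation submodule $V_0$. This costs you the two structural facts you correctly flag and justify -- transitivity of $P_i$ forcing $[V,P_i]=V_0\le D_{i,m}'$, and $|P_i'|=|P_i|/\ell^{i}$ via $(A\wr C_\ell)^{\mathrm{ab}}\cong A^{\mathrm{ab}}\times C_\ell$ (itself essentially the same Olsson lemma) -- but it buys a uniform, non-inductive count and the extra factor $\ell^{2i-1}$ in the second bound. Since the lemma is only used as a lower bound downstream, either version serves the paper equally well; the alternative you sketch at the end, induction via $k(K\wr C_\ell)\ge k(K)^{\ell}/\ell$, is exactly the paper's argument for the first inequality.
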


\begin{proof}
We argue by induction on $i$. The statement is clear when $i=0$. For $i>0$ we
have $D_{i,m}= D_{i-1,m}\wr C_\ell$; by the inductive assumption the base group
has at least $m^{\ell^i}/\ell^{\ell(\ell^{i-1}-1)/(\ell-1)}$ conjugacy classes,
and the cyclic group on top has orbits of length at most~$\ell$ on these. The
first claim follows. For the second one, by \cite[Lemma~1.4]{Ol76} we have
that $D_{i,m}'$ lies in the base group $D_{i-1,m}^\ell$ of the outer wreath
product, and has index $m\ell^i$ therein. Using the first assertion this yields
the stated lower bound.
\end{proof}

\begin{prop}   \label{prop:GLn}
 Let $\ell\ge5$ be a prime not dividing~$q$. The unipotent $\ell$-blocks of
 $\GL_n(q)$ and of $\GU_n(q)$ do not provide counterexamples to~$\Ca$ or~$\Cb$.
\end{prop}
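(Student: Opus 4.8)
The structure of unipotent $\ell$-blocks of $\GL_n(q)$ and $\GU_n(q)$ for $\ell\nmid q$ is classical: by work of Fong--Srinivasan, a unipotent $\ell$-block $B$ is determined by an $e$-core $\kappa$ (where $e$ is the order of $q$, resp.\ $-q$, modulo $\ell$) and has a weight $w$ with $n=|\kappa|+ew$, and then $k(B)=k(e,w)$, the number of $e$-multipartitions of $w$, while a defect group of $B$ is (isomorphic to) a direct product $D=\prod_i D_{i,\ell^{d}}^{a_i}$ of iterated wreath products $D_{i,m}=C_m\wr C_\ell\wr\cdots\wr C_\ell$ built from the cyclic Sylow $\ell$-subgroup of $\GL_e(q)$ (resp.\ $\GU_e(q)$), where $\ell^{d}\|q^e-1$ (resp.\ the relevant order), and $w=\sum_i a_i\ell^i$ is the $\ell$-adic expansion of $w$. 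The plan is to bound $k(B)=k(e,w)$ from above by $k(e,w)\le e^w$ (Lemma~\ref{lem:k(a,b)}(a)), and to bound $k(D')$ and $k(D)$ from below using Lemma~\ref{lem:k(D)}, whose hypotheses are exactly tailored to these wreath products. For~$\Cb$ one also needs a lower bound on $l(B)$; here $l(B)$ equals the number of $\ell$-modular unipotent irreducibles in $B$, which by Geck--Hiss/Dipper--James is again controlled combinatorially and is at least the number of $e$-cores of the right size, in particular $l(B)\ge1$, and one has $k_0(B)$ at least as large as a product of $k(e,\cdot)$-values over the $\ell$-adic digits of $w$ by the analogue of Olsson's height-zero formula.

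First I would dispose of the abelian case: if $w<\ell$ the defect group is abelian (it is a product of copies of $C_{\ell^d}$), and then Theorem~\ref{thm:abelian} applies, so $B$ is not a counterexample. Hence assume $w\ge\ell\ge5$. Next, write $w=\sum_{i\ge0}a_i\ell^i$ with $0\le a_i<\ell$ and pick an index $j$ with $a_j\ne0$; then $D$ contains a direct factor $D_{j,\ell^d}$ (indeed $D_{j,\ell^d}^{a_j}$), so $k(D')\ge k(D_{j,\ell^d}')\cdot(\text{something}\ge1)$ and similarly for $k(D)$, using that for a direct product $k(A\times C)=k(A)k(C)\ge\max\{k(A),k(C)\}$. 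Plugging the bounds of Lemma~\ref{lem:k(D)} with $m=\ell^d\ge\ell$ gives $k(D')\ge \ell^{d(\ell^j-1)}/\ell^{(\ell^j-1)/(\ell-1)+j-1}$ and $k(D)\ge\ell^{d\ell^j}/\ell^{(\ell^j-1)/(\ell-1)}$. Since $ew=|n|-|\kappa|\le n$ and, more usefully, $\ell^j\le w$ while the full weight $w$ can be reconstructed from its digits, the cleanest route is to instead use \emph{all} the digits: $k(D')\ge\prod_i k(D_{i,\ell^d}')^{a_i}$ dominates a power of $\ell$ whose exponent grows like $d\cdot w$ minus lower-order terms, and similarly $k_0(B)\ge\prod_i k(e,a_i)\ge$ a comparable quantity, whereas $k(B)\le e^w$ is only exponential in $w$ with base $e\le \ell-1$. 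Comparing exponents, $k(B)/k_0(B)\le e^w \le (\ell-1)^w$ while $k(D')$ grows at least like $\ell^{(d-1)w}$ up to a subexponential correction; since $d\ge1$ and, when $d=1$, one still has the extra factor from $k_0(B)\ge\prod k(e,a_i)$ being itself of size roughly $2^w$ or more for $w\ge\ell$, the inequality $k(B)\le k_0(B)\,k(D')$ follows. The analysis for $\Cb$ is parallel, with $k(D)\ge\ell^{d\ell^j}/\ell^{\cdots}$ being even larger than $k(D')$, and $l(B)\ge1$; in fact one can afford the crude $l(B)\ge 1$ here because $k(D)$ beats $k(B)=k(e,w)\le e^w$ outright once $d\ge1$ and $w\ge\ell$, the only delicate point being to check the finitely many boundary cases $(e,d,w)$ with $e$ small and $w$ close to $\ell$ directly, exactly as in Lemma~\ref{lem:lem3} and Lemma~\ref{lem:k(B)}.

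The main obstacle I expect is the bookkeeping at the \emph{boundary}: small $e$ (that is $e=1,2,3,4$), $d=1$, and $w$ only slightly larger than $\ell$, where the subexponential losses in Lemma~\ref{lem:k(D)} (the $\ell^{-(\ell^i-1)/(\ell-1)-i+1}$ factors) are not yet swamped by the main term and where $e^w$ is not much smaller than $\ell^w$. For these one must either invoke the sharper multipartition estimates of Lemmas~\ref{lem:k(b,w)}--\ref{lem:k(B)} (which were set up precisely with the quantities $k(d+(\ell^a-1)/d,w)$ and $k((\ell^a-\ell^{a-1})/d,w)$ appearing here — note $e$ plays the role of $d+(\ell^a-1)/d$ with $a$ the relevant multiplicative order parameter, via the structure of the cyclic Sylow subgroup of $\GL_e(q)$), or else check a short explicit list of pairs $(e,w)$ by direct computation, comparing $k(e,w)$ against $k(D_{j,\ell}')$ with $\ell\in\{5,7,\dots\}$. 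I would organize the argument so that the generic bound handles everything with $w\ge 2\ell$ (say), reducing to $\ell\le w<2\ell$, hence to finitely many $(e,\ell,w)$ once $e$ is also bounded (and $e\le \ell-1$ is automatic), and finish those by the tables/direct checks already invoked elsewhere in the paper.
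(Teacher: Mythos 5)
There is a genuine gap, and it is at the very centre of the argument: you identify $k(B)$ with $k(e,w)$, the number of $e$-multipartitions of the weight $w$. That quantity is in fact $l(B)$ — the number of unipotent characters in the block, which form a basic set — not $k(B)$. The ordinary irreducible characters of a unipotent $\ell$-block of $\GL_n(q)$ are distributed over all Lusztig series $\cE(G,t)$ for $\ell$-elements $t$ of $G^*$, and Olsson's count (the quantity $k(\ell,a,d,w)=\sum_{\bw}k\bigl(d+(\ell^a-1)/d,w_0\bigr)\prod_{i\ge1}k\bigl((\ell^a-\ell^{a-1})/d,w_i\bigr)$, where $\ell^a=(q^d-1)_\ell$ and the sum is over $\ell$-compositions of $w$) is of the order of $\ell^{aw}$ — the same order of magnitude as $|D|$ and as $k(D)\ge\ell^{aw-w/(\ell-1)}$. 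Your closing remark that ``$e$ plays the role of $d+(\ell^a-1)/d$'' conflates the multiplicative order $d$ of $q$ modulo $\ell$ with these much larger multipartition bases; they are not equal, and the dependence on $\ell^a$ cannot be suppressed.

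Once $k(B)$ is corrected, the strategy collapses at every step where you rely on $k(B)\le e^w$: the claim that ``$k(D)$ beats $k(B)$ outright once $d\ge1$ and $w\ge\ell$'' is false, the crude bound $l(B)\ge1$ is nowhere near sufficient for~$\Cb$ (one needs $l(B)=k(d,w)\ge k(1,w)\ge p_\ell(w)$ precisely to absorb the number of $\ell$-compositions appearing in the upper bound for $k(B)$), and for~$\Ca$ one needs the genuine multiplicative savings $\ell^{-0.83w\log_\ell d}$ resp.\ $\ell^{-cw/\ln\ell}$ of Lemmas~\ref{lem:lem3} and~\ref{lem:k(B)} together with the product formula $k_0(B)=\prod_i k(b\ell^i,a_i)$ and the lower bound of Lemma~\ref{lem:lower} — these refined estimates are not a boundary-case patch, as your plan suggests, but carry the main term of the comparison. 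The correct architecture is: reduce via Michler--Olsson to the principal block of $\GL_{wd}(q)$, strip off the digit $a_0$ using submultiplicativity, and then compare $p_\ell(w)\,\ell^{aw-(\text{savings})}$ against $\ell^{aw-w/(\ell-1)-(\text{lower order})}$ digit by digit; I would rework the proof along those lines.
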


\begin{proof}
Let $d$ be the order of $q$ modulo~$\ell$. Let $B$ be a unipotent $\ell$-block
of $\GL_n(q)$. According to the reduction given in \cite[Thm.~1.9]{MO83} there
exists $w\ge0$ such that all relevant block theoretic invariants of $B$ are the
same as those of the principal $\ell$-block of $\GL_{wd}(q)$; $w$ is then
called the \emph{weight} of $B$. Hence we may and will now assume that $B$ is
the principal block of $G:=\GL_{wd}(q)$. Let $\ell^a$ be the precise
power of $\ell$ dividing $q^d-1$. Write $w=\sum_{i=0}^va_i\ell^i$ for the
$\ell$-adic decomposition of $w$. The Sylow $\ell$-subgroups of $G$ are
of the form $\prod_{i=0}^v D_i^{a_i}$ with $D_i:=D_{i,\ell^a}$ from
Lemma~\ref{lem:k(D)}, so $k(D)=\prod_i k(D_i)^{a_i}$. 
According to \cite[Prop.~6]{Ol84} the invariant $k(B)$ is given by
$$k(B)=\sum_{\bw} k(b,w_0) \prod_{i\ge1}k(b_1,w_i)=k(\ell,a,d,w),$$
where $b=d+(\ell^a-1)/d$, $b_1=(\ell^a-\ell^{a-1})/d$, and the sum runs over
the set of $\ell$-compositions $\bw$ of $w$. Clearly, $w_0\ge a_0$, and
by Lemma~\ref{lem:k(a,b)}(b) we have $k(b,w_0)\le k(b,a_0)k(b,w_0-a_0)$. It
follows that we may assume $a_0=0$ when proving $\Cb$ by using that
$k(b,a_0)\le b^{a_0}\le\ell^{aa_0}=k(D_0)^{a_0}$. Similarly, by
\cite[p.~46]{Ol84} we have $k_0(B)=\prod_{i\ge0}k(b\ell^i,a_i)$ and thus we may
also assume $a_0=0$ when proving $\Ca$ by cancelling $k(b,a_0)$ from the upper
bound for $k(B)$ and from the expression for $k_0(B)$.
\par
So from now on, we assume $w=\sum_{i=1}^va_i\ell^i$ is divisible by $\ell$.
Then by Lemma~\ref{lem:k(D)}
$$k(D)=\prod_{i=1}^v k(D_i)^{a_i}
  \ge \prod_{i=1}^v \left(\ell^{a\ell^i}/\ell^{(\ell^i-1)/(\ell-1)}\right)^{a_i}
  = \ell^{aw-\sum a_i(\ell^i-1)/(\ell-1)}
  \ge \ell^{aw-w/(\ell-1)}.$$
Furthermore, the unipotent characters in $B$ form a basic set for $B$ (see
e.g.~\cite{GH91}), and they are in bijection with $d$-multipartitions of $w$
(see \cite{BMM,CE94}), whence $l(B)=k(d,w)\ge k(1,w)$, the number of partitions
of $w$. As for $k(B)$, by Lemma~\ref{lem:k(B)}
$$k(B)\le \begin{cases}
  p_\ell(w)\, \ell^{aw-0.83w\ln(d)/\ln(\ell)}& \text{ for }d>1,\\
  p_\ell(w)\, \ell^{aw-0.9w/\ln(\ell)}& \text{ if }a>1,\,d=1,\\
  p_\ell(w)\, \ell^{w-0.57w/\ln(\ell)}& \text{ for }a=d=1,
 \end{cases}$$
unless $(\ell^a,d,w)=(5,1,5)$. Now observe that $p_\ell(w)\le k(1,w)\le l(B)$.
Combining our estimates we then indeed obtain
$$k(B)\le |W|\, \ell^{w(a-1/(\ell-1))}\le l(B)\ k(D),$$
as required for $\Cb$. (This also holds in the case $(\ell^a,d,w)=(5,1,5)$ as
there $k(B)=k(5,5)+k(4,1)=510$, $l(B)=k(1,5)=7$ and $k(D)\ge 625$.)
\par
Now consider $\Ca$. As Sylow $\ell$-subgroups of $G$ are of the form
$\prod_{i=0}^v D_i^{a_i}$, we have $k(D')=\prod k(D_i')^{a_i}$.
By Lemma~\ref{lem:k(D)}
$$k(D')=\prod_{i=1}^v k(D_i')^{a_i}\ge
  \prod_{i=1}^v\left(\ell^{a(\ell^i-1)-(\ell^i-1)/(\ell-1)-(i-1)}\right)^{a_i}
  = \ell^{aw-w/(\ell-1)-\sum a_i(a+i-\ell/(\ell-1))}.$$
Furthermore, using Lemma~\ref{lem:lower}, we have
$$k_0(B)=\prod_{i\ge1}k(b\ell^i,a_i)\ge\prod_{i\ge1}(b\ell^{i-1})^{a_i}
  =(b/\ell^a)^{\sum a_i}\ell^{\sum a_i(a+i-1)}.$$
Let first $d=1$. Thus $b=\ell^a$. We also use from \cite[Lemma~5.2]{MkB} that
$p_\ell(w)\le\ell^{\binom{u+1}{2}}$ with $u=\flr{\log_\ell(w)}$. Then with the
estimates from Lemma~\ref{lem:k(B)} we find
$$\begin{aligned}
  k(D')k_0(B)/k(B)\ge&\ell^{aw-w/(\ell-1)-\sum a_i(a+i-\ell/(\ell-1))
   +\sum a_i(a+i-1)-aw+0.9w/\ln(\ell)}/p_\ell(w)\\
  \ge& \ell^{0.9w/\ln(\ell)-u(u+1)/2-w/(\ell-1)}>1
\end{aligned}$$
for $a>1$, and
$$k(D')k_0(B)/k(B)
  \ge\ell^{0.57w/\ln(\ell)-u^2-w/(\ell-1)+\sum a_i/(\ell-1)}>1$$
for $(\ell,w)\ne(5,5)$ when $a=1$. The case $\ell=w=5$ can be checked directly.
\par
Now consider the case when $d>1$, where with $b\ge \ell^a/d$ we have
$$k_0(B)\ge (b/\ell^a)^{\sum a_i}\ell^{\sum a_i(a+i-1)}
  \ge\ell^{\sum a_i(a+i-1)}d^{-\sum a_i}
  =\ell^{\sum a_i(a+i-1)-\sum a_i\log_\ell d}.
$$
Combining the above estimates we find
$$k(D')k_0(B)/k(B)\ge\ell^{(0.83w-\sum a_i )\log_\ell d-w/(\ell-1)
  +\sum a_i/(\ell-1)-u(u+1)/2}\ge2$$
unless $d=2$, and either $\ell=5$, $w\in\{5,10,25,30,35\}$, or $\ell=w=7$.
(The stronger bound~2 will be needed in the proof of Theorem~\ref{thm:class}.)
For $w=\ell=7$ we get $k(B)=2996$, $k_0(B)=35$ and $k(D')\ge 7^5$;
for $\ell=5$ and $w\in\{10,25,30\}$, replacing our bound for $p_5(w)$ by the
actual values~3, 7 and 9, respectively shows the claim, and finally for $w=5$
we have $k(5,1,2,5)=254$, $k_0(B)=20$ and $k(D')\ge5^3$.
\par
Now let $B$ be a unipotent $\ell$-block of $\GU_n(q)$, and write $d$
for the order of $-q$ modulo~$\ell$. Then the block theoretic invariants
of $B$ are the same as for the principal $\ell$-block of $\GL_{wd}(q')$,
where $w$ is the weight of $B$ and $q'$ is any prime (power) such that
$q'$ has order $d$ modulo~$\ell$ (see \cite{Ol84}). Thus the claim for $B$
follows from the result for blocks of $\GL_n(q)$ proved above.
\end{proof}

\subsection{Block invariants of special linear and unitary groups}

We now turn to the quasi-simple groups $\SL_n(q)$ and $\SU_n(q)$.
For this we need to recall in some detail Olsson's description
\cite[p.~45]{Ol84} of the set of characters in the principal $\ell$-block of
$\GL_n(\eps q)$, $\eps=\pm1$. (As customary we write $\GL_n(-q):=\GU_n(q)$, and
similarly $\SL_n(-q):=\SU_n(q)$ and so on.) If $\ell$ divides $q-\eps$, the
principal $\ell$-block is the unique unipotent block, so by Brou\'e--Michel
it consists of the union of the Lusztig series indexed by conjugacy classes of
$\ell$-elements in $\GL_n(\eps q)$. A conjugacy class of $\GL_n(q)$ is uniquely
determined by the characteristic
polynomial of its elements, which is a product of minimal polynomials over
$\FF_q$ of elements of $\ell$-power order in $\overline{\FF}_q^\times$. Let
$\cF^i$ denote the set of such polynomials of degree $\ell^i$, where $i\ge0$.
Then $|\cF^0|=\ell^a$ and $|\cF^i|=\ell^a-\ell^{a-1}$ for $i>0$, where
we let $\ell^a$ denote the precise power of $\ell$ dividing $q-1$. The
conjugacy classes of $\ell$-elements in $\GL_n(q)$ are thus in bijection
with maps
$$m: \cF:=\bigcup\cF^i \rightarrow\ZZ_{\ge0},\quad f\mapsto m_f,\qquad
  \text{ with }\quad\sum_i\sum_{f\in\cF^i} m_f\ell^i=n,$$
in such a way that $m$ labels the class of $\ell$-elements with characteristic
polynomial $\prod_f f^{m_f}$. If $t\in\GL_n(q)$ corresponds to $m$, then
the Lusztig series $\cE(G,t)$ contains $\prod_f k(1,m_f)$ characters
(all lying in the principal $\ell$-block). It is shown from this in
\cite[Prop.~6]{Ol84} that $k(B)=k(\ell,a,1,n)$.

We now determine the number of characters of height zero in the principal
$\ell$-block of $\SL_n(\eps q)$ for $\ell|(q-\eps)$; the formula for the
number of modular characters is an immediate consequence of the main result of
Kleshchev and Tiep \cite{KT09}:

\begin{thm}   \label{thm:k0 SL}
 Let $\SL_n(\eps q)\le G\le\GL_n(\eps q)$ with $\eps\in\{\pm1\}$, and $\ell>2$
 be a prime dividing $q-\eps$. Set $\ell^a:=(q-\eps)_\ell$,
 $\ell^g:=|\GL_n(\eps q):G|_\ell$ and $\ell^u=\gcd(\ell^g,n,q-\eps)$.
 Let $B$ denote the principal $\ell$-block of $G$, and $\tB$ the principal
 $\ell$-block of $\GL_n(\eps q)$. Then
 $$k_0(B)=k_0(\tB)/\ell^g
    +\begin{cases} \ell^{a+f-g}& \text{ if $n=\ell^f$ and $g>0$,}\\
                   0& \text{ else},
 \end{cases}$$
 and
 $$l(B)= k(1,n)+\sum_{i=1}^u (\ell^i-\ell^{i-1})\,k(1,n/\ell^i).$$
\end{thm}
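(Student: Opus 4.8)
The plan is to exploit the action of the diagonal quotient $\tilde G:=\GL_n(\eps q)$ on the characters of the principal block $\tB$ of $\tilde G$ via multiplication by linear characters of $\tilde G/G\cdot Z$, together with Clifford theory, to pass from $k_0(\tB)$ to $k_0(B)$. First I would recall from Olsson's description \cite[p.~45]{Ol84} reviewed above that $\Irr(\tB)$ is partitioned into Lusztig series $\cE(\tilde G,t)$ indexed by the maps $m\colon\cF\to\ZZ_{\ge0}$ with $\sum_i\sum_{f\in\cF^i}m_f\ell^i=n$, with $\cE(\tilde G,t)$ contributing $\prod_f k(1,m_f)$ characters, and I would record which of these have height zero: a character in $\cE(\tilde G,t)$ has height zero exactly when every partition of $m_f$ involved is trivial, i.e.\ when $m$ is supported on degree-one polynomials only (contributing the factor $k_0(\tB)=k(b\ell^0,a_0)\cdots$ as in \cite[p.~46]{Ol84} specialised to $d=1$). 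The linear characters of $\tilde G$ of $\ell$-power order form a group of order $(q-\eps)_\ell=\ell^a$; tensoring permutes the Lusztig series by translating $s=1$ through the $\ell$-elements of $Z(\tilde G^*)$, and on the height-zero characters this action is free except on the series attached to $n=\ell^f$, where the "all equal" configurations are fixed.

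Next I would analyse the restriction $\Irr(\tB)\to\Irr(B)$. Since $\tilde G/G$ is abelian of order $\ell^g$ (after factoring out the part prime to $\ell$, which is irrelevant to $B$), Clifford theory says each $\tilde\chi\in\Irr(\tB)$ restricts to a sum of $\tilde G/G$-conjugates of some $\chi\in\Irr(B)$, and the stabiliser of $\tilde\chi$ under tensoring by $\widehat{\tilde G/G}$ governs how many constituents appear; Gallagher's theorem gives the precise bookkeeping. For a character $\tilde\chi$ in a series not fixed by any nontrivial linear character of $\ell$-power order dividing the relevant quotient, $\tilde\chi$ restricts irreducibly and $\ell^g$ such characters fuse to give one character of $B$ with the same degree; these contribute $k_0(\tB)/\ell^g$ to $k_0(B)$ once one checks the divisibility (which holds because, as noted, the tensoring action on the height-zero configurations not of "all-equal" type is free of rank $g$). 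The correction term arises precisely from the series with $n=\ell^f$: there, a configuration fixed by all of $\widehat{\tilde G/G}$ has its restriction to $B$ splitting into $\ell^g$ pieces, each still of $\ell'$-degree divided by $\ell^g$—one must verify the height is still zero—and counting these fixed configurations gives $\ell^{a+f-g}$ (the $\ell^a$ polynomials of degree $1$, distributed so the single $\ell^f$-configuration survives, with the $\ell^g$-fold splitting accounting for the $\ell^{-g}$). When $g=0$ there is nothing to split and the formula reads $k_0(B)=k_0(\tB)$, consistent with the stated cases.

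For the formula for $l(B)$ I would invoke that, by \cite{GH91}, the unipotent characters of $\tilde G$ restricted suitably form a basic set, and that the decomposition map for $B$ was computed by Kleshchev--Tiep \cite{KT09}: the number of irreducible $\ell$-Brauer characters of $\SL_n(\eps q)\le G\le\GL_n(\eps q)$ in the principal block equals the number of $\tilde G/G$-orbits on the basic set of $\tB$, counted with the multiplicities produced by the non-self-conjugate characters splitting. Concretely, the basic set of $\tB$ is indexed by the maps $m$ supported on $\cF^0$, i.e.\ by the $k(1,n)$-many ways built from degree-one polynomials with a partition attached, together with the higher-degree contributions; the $\widehat{\tilde G/G}$-action translates the basepoint among the $\ell^a$ classes of degree-one polynomials, and a Burnside/orbit-counting argument over the subgroup of order $\ell^u=\gcd(\ell^g,n,(q-\eps)_\ell)$—the precise subgroup that can stabilise an $n$-configuration—produces the sum $k(1,n)+\sum_{i=1}^u(\ell^i-\ell^{i-1})\,k(1,n/\ell^i)$, where the $i$-th term counts the (split) Brauer characters whose stabiliser has order exactly $\ell^i$, there being $\ell^i-\ell^{i-1}$ such stabilising characters and $k(1,n/\ell^i)$ compatible configurations. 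I expect the main obstacle to be the careful Clifford-theoretic bookkeeping of exactly which series are fixed and by how large a subgroup—i.e.\ pinning down $\ell^u$ and checking that the fixed height-zero characters restrict to characters that still have height zero in $B$; the combinatorial identities for $k_0(\tB)/\ell^g$ and for the $l(B)$ sum are then routine once the orbit structure is in hand, and I would defer the few small-$n$ or small-$g$ sanity checks to a direct verification.
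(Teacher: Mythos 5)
Your plan for $l(B)$ is essentially the paper's argument: both rest on the Kleshchev--Tiep (resp.\ Denoncin) splitting numbers $\gcd(q-\eps,\la'_1,\la'_2,\ldots)_\ell$ for the Brauer characters labelled by $\la\vdash n$, together with the observation that partitions all of whose column lengths are divisible by $\ell^i$ correspond to partitions of $n/\ell^i$; your orbit-counting phrasing is a repackaging of the same count, capped at $\ell^u$ for the intermediate group $G$.

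The $k_0$ part, however, has a genuine gap. First, your description of $\Irr_0(\tB)$ is incorrect: height zero is not equivalent to ``all partitions trivial and support on degree-one polynomials'' (already for $n=\ell$ the $\ell^{a+1}$ height-zero characters of $\tB$ come from the central $\ell$-elements $t$ with $C_{\tG}(t)=\GL_\ell(q)$ paired with the $\ell$ hook partitions of $\ell$). More seriously, the correction term does not arise from height-zero characters of $\tB$ with nontrivial stabiliser under tensoring --- that action is in fact free on $\Irr_0(\tB)$, since a fixed $\chi$ would be induced from a proper subgroup and so have degree divisible by $\ell$. It arises from characters $\chi\in\Irr(\tB)$ of \emph{positive} height whose restriction to $G$ splits into constituents that have height zero in $B$, and here your bookkeeping breaks down: the number of constituents of $\chi|_G$ for $\chi\in\cE(\tG,t)$ is $\min\{A_t,\ell^g\}$ with $A_t=|C_{\PGL_n}(\bar t)^F:C^\circ_{\PGL_n}(\bar t)^F|$, not $\ell^g$, and the $\ell$-part of $\chi(1)$ is in general strictly larger than the number of constituents. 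The heart of the paper's proof is the computation (via \cite{MO83}) that if $\chi$ splits into $\ell^i$ constituents then each $\zeta$-orbit of polynomials in the support of $t$ contributes a factor of at least $(\ell^i!)_\ell$ to $\chi(1)_\ell$, which exceeds $\ell^i$ unless $i=1$ and there is a single orbit of linear polynomials; this is what forces $n=\ell^f$, reduces to the $\ell^{a-1}$ classes with $C_{\tG}(t)\cong\GL_{\ell^{f-1}}(q)^\ell$, and yields the count $\ell^{a-1}\cdot\ell^{f-1}\cdot\ell/\ell^{g-1}=\ell^{a+f-g}$. Your proposal replaces this with ``one must verify the height is still zero'' and a parenthetical count that does not match the actual enumeration; without that verification neither the case distinction $n=\ell^f$ nor the exponent $a+f-g$ is justified.
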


\begin{proof}
Let first $\eps=1$, so $\SL_n(q)\le G\le\GL_n(q)=:\tG$. The characters in the
principal $\ell$-block $B$ of $G$ are precisely the constituents of the
restrictions to $G$ of the characters in the principal $\ell$-block $\tB$ of
$\tG$. If $\chi\in\Irr_0(\tB)$ then, being of $\ell'$-degree, by Clifford theory
it cannot split upon restriction to a
normal subgroup of $\ell$-power index, so the restrictions of characters in
$\Irr_0(\tB)$ contribute $k_0(\tB)/\ell^g$ to $k_0(B)$. Any further height zero
character of $B$ must be the constituent of a splitting character in $\tB$, of
degree divisible by $\ell$.  Assume $\chi\in\Irr(\tB)$ lies in the Lusztig
series $\cE(\tG,t)$ of the $\ell$-element $t\in\GL_n(q)$. As argued in the
proof of \cite[Thm.~5.1]{MkB} its restriction to $\SL_n(q)$ splits into
$A_t:=|C_{\PGL_n}(\bar t)^F:C_{\PGL_n}^\circ(\bar t)^F|$ distinct constituents,
where $\bar t$ is the image of $t$ in $\PGL_n$ and $F$ is the standard
Frobenius endomorphism on $\PGL_n$. By Clifford theory $\chi$ then splits into
$\ell^b=\min\{A_t,\ell^g\}$ distinct constituents upon restriction to~$G$.
Thus $\chi|_G$ contributes to $\Irr_0(B)$ if and only if $\chi(1)_\ell=\ell^b$.
Now $\ell^i|A_t$ if the set of
eigenvalues of $t$ is invariant under multiplication by an $\ell^i$th root of
unity $\zeta\in\FF_q^\times$. So $\ell$-elements $t$ with $\ell^i|A_t$ are
parametrised by maps $m:\cF\rightarrow\ZZ_{\ge0}$ as above that are constant on
$\zeta$-orbits. For $m$ such a map,
$C_{\tG}(t)\cong\prod_{f\in\cF}\GL_{m_f}(q^{\deg(f)})$. Thus, if $m_f>0$ for
some non-linear polynomial $f\in\cF$ (and hence with $\deg(f)\ge\ell$), then
$|\tG:C_{\tG}(t)|_\ell\ge\ell^{a(\ell-1)}>\ell^a$, whence the Lusztig series of
$t$ cannot contribute to $\Irr_0(B)$. To count characters of height zero
we thus only need to consider $\ell$-elements $t$ that are diagonalisable over
$\FF_q$.
\par
Let $m$ be the map corresponding to such an element, with support on linear
polynomials in $\cF$ and consider the $\ell$-adic decompositions
$m_f=\sum_{i\ge0}m_{f,i}\ell^i$ for $f\in\cF$ and $n=\sum_i n_i\ell^i$. Then as
discussed in \cite[proof of Cor.~2.6]{MO83} every index $i$ for which
$\sum_f m_{f,i}-n_i=k>0$ contributes a factor of at least $(k!)_\ell$ to the
index of $C_{\tG}(t)$ in $\GL_n(q)$ and thus to the degree of any
$\chi\in\cE(\tG,t)$. If $\chi$ splits into $\ell^i$ factors, then as seen
before all
multiplicities $m_f$ occur a multiple of $\ell^i$ times, and by the
preceding discussion each such will contribute a factor $(\ell^i!)_\ell$ to
$\chi(1)_\ell$. This is bigger than $\ell^i$ if $i>1$ or if there are at
least two such orbits of polynomials. Thus contributions to $\Irr_0(B)$ can
occur in this way only when first $n=\ell^f$ is a power of $\ell$, and second,
$g\ge1$ and $i=1$. In this case, there are exactly $\ell^{a-1}$ orbits of
linear polynomials over $\FF_q$, and hence the same number of maps $m$. The
centraliser of a corresponding $\ell$-element has the structure
$C_{\tG}(t)\cong\GL_{n/\ell}(q)^\ell$. Now $\GL_{n/\ell}(q)$ has precisely
$n/\ell=n^{f-1}$ unipotent characters of height zero, each leading to $\ell$
height zero characters. Any of these necessarily restricts irreducibly to
$G$. Thus we obtain $\ell^{a-1}\ell^{f-1}\ell/\ell^{g-1}=\ell^{a+f-g}$ further
characters in $\Irr_0(B)$, as claimed.
\par
Let us now consider the number $l(B)$. Kleshchev and Tiep \cite[Thm.~1.1]{KT09}
describe the number of irreducible constituents of the restriction to
$\SL_n(q)$ of an irreducible $\ell$-modular Brauer character of $\GL_n(q)$, as
follows: The Brauer characters in the principal block
are indexed by partitions $\la\vdash n$, and the character labelled by $\la$
splits into $\gcd(q-1,\la_1',\la_2',\ldots)_\ell$ constituents for $\SL_n(q)$,
where $\la'=(\la_1',\la_2',\ldots)$ is the partition conjugate to $\la$.
Now clearly for any divisor $\ell^i$ of $n$, the partitions of $n$ with all
parts divisible by $\ell^i$, are in natural bijection with partitions of
$n/\ell^i$, whence their number is $k(1,n/\ell^i)$. By Clifford theory, the
splitting occurs `at the top' of the chain of normal subgroups of $\ell$-power
index, whence the claimed formula for $l(B)$ by induction.
\par
Now let $\eps=-1$. As pointed out before, the block theoretic invariants for
the principal $\ell$-block of $\GU_n(q)$ coincide with those of the principal
$\ell$-block of $\GL_n(q^2)$, and a Sylow $\ell$-subgroup of $\GU_n(q)$
becomes a Sylow $\ell$-subgroup of $\GL_n(q^2)$ under the natural embedding
$\GU_n(q)\le\GL_n(q^2)$. The proof for $\GU_n(q)$ is then entirely similar to
the above, where for the second part we replace the reference to \cite{KT09}
by the corresponding result \cite[Prop.~4.9]{De17} of Denoncin for the groups
$\SU_n(q)$.
\end{proof}

The situation for the projective general linear and unitary groups is somewhat
easier:

\begin{prop}   \label{prop:k0 PGL}
 Let $\ell>2$ be a prime dividing $q-\eps$. Set $\ell^a=(q-\eps)_\ell$ and
 $\ell^m=\gcd(n,q-\eps)_\ell$. Then the principal $\ell$-block $\bar B$ of
 $\PGL_n(\eps q)$ satisfies $k_0(\bar B)=k_0(\tilde B)/\ell^{a-m}$,
 where $\tB$ is the principal $\ell$-block of $\GL_n(\eps q)$.
\end{prop}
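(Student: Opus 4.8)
The plan is to exploit the surjection $\GL_n(\eps q)\to\PGL_n(\eps q)$ with kernel the scalar matrices $Z\cong C_{q-\eps}$, and to count height-zero characters of $\bar B$ as those characters of $\Irr_0(\tB)$ that are trivial on $Z$, suitably accounting for the fact that a single character of $\PGL_n(\eps q)$ may be covered by several characters of $\GL_n(\eps q)$ differing by a linear character of $Z$. Concretely, $\Irr(\PGL_n(\eps q))$ is identified with $\{\chi\in\Irr(\GL_n(\eps q))\mid Z\le\ker\chi\}$, and a character $\chi$ of $\GL_n(\eps q)$ has $Z$ in its kernel precisely when the corresponding semisimple label (the $\ell$-element $t$, in the notation used in the proof of Theorem~\ref{thm:k0 SL}, together with the unipotent part) is fixed under tensoring by the linear characters of $\GL_n(\eps q)/\SL_n(\eps q)$ of $\ell$-power order --- equivalently, when multiplying the eigenvalue multiset of $t$ by a full set of $(q-\eps)_\ell$-th roots of unity $\zeta\in\FF_q^\times$ (resp.\ $\FF_{q^2}^\times$) leaves $m$ invariant. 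This is exactly the invariance condition that appeared in the analysis of splitting characters in Theorem~\ref{thm:k0 SL}, now read in the ``dual'' direction.

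First I would record that the height-zero characters of $\tB$ with $Z$ in their kernel are parametrised, via the discussion preceding Theorem~\ref{thm:k0 SL}, by maps $m:\cF\to\ZZ_{\ge0}$ supported on linear polynomials (as before, a non-linear polynomial forces $\ell$-power degree and hence positive height) that are constant on the orbits of multiplication by $\mu_{\ell^a}\le\FF_q^\times$ (resp.\ the norm-one subgroup in the unitary case). Exactly as in the proof of Theorem~\ref{thm:k0 SL}, such $m$ gives a character of height zero only in the ``aligned'' situation, and one checks that the number of such maps is $k_0(\tilde B)/\ell^{a}$ times a correction coming from the case $n$ a power of $\ell$ --- here, though, the bookkeeping is cleaner than in Theorem~\ref{thm:k0 SL} because $\PGL_n(\eps q)$ is the \emph{full} quotient, so there is no further Clifford-theoretic refinement: a height-zero character of $\tB$ either descends to $\PGL_n(\eps q)$ or it does not. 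The key numerical point is then that the $Z$-invariant maps $m$ are in bijection with arbitrary maps on the $\ell^{a-m}$-fold ``folded'' set of linear polynomials modulo $\mu_{\ell^{a-m}}$; this folding replaces the parameter $a$ by $m$ everywhere in Olsson's product formula $k_0(\tB)=\prod_{i\ge0}k(b\ell^i,a_i)$, where here $d=1$ so $b=\ell^a$ becomes $\ell^m$, yielding $k_0(\bar B)=\prod_{i\ge0}k(\ell^{m+i},a_i)=k_0(\tB)/\ell^{a-m}$.

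I would carry this out in three steps: (1) identify $\Irr_0(\bar B)$ with the $Z$-trivial members of $\Irr_0(\tB)$, using that $\ell'$-degree characters cannot split over the central quotient (Clifford theory) so that ``height zero for $\bar B$'' is equivalent to ``height zero for $\tB$ and trivial on $Z$'' --- this also uses that the defect groups differ only by the central $\ell$-part $\ell^{a-m}$, so the height statistic transforms correctly; (2) translate $Z$-triviality into the $\mu_{\ell^a}$-invariance condition on the parametrising map $m$, discarding non-linear support exactly as in Theorem~\ref{thm:k0 SL}; (3) count: show the invariant maps are counted by the same product formula with $a$ replaced by $m$, giving the factor $\ell^{-(a-m)}$. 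The main obstacle I anticipate is Step~(1) in the unitary case: one must be careful that the identification $\PGU_n(q)$-characters $\leftrightarrow$ $Z$-trivial $\GU_n(q)$-characters interacts correctly with the passage to $\GL_n(q^2)$ used to compute invariants, and that no extra height-zero characters appear ``from $\SU_n(q)$'' as they did in Theorem~\ref{thm:k0 SL} --- but since $\PGL_n(\eps q)$ sits on the opposite side of $\GL_n(\eps q)$ from $\SL_n(\eps q)$, the subtle ``aligned, $n$ a power of $\ell$'' contributions that enlarged $k_0$ for $\SL$ here simply do not arise, because those came from characters of $\GL_n$ of positive height (degree divisible by $\ell$) splitting on restriction, whereas for $\PGL_n$ we are selecting a \emph{sub}set of the $\ell'$-degree characters, not gaining new ones. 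Once this asymmetry is made explicit, the formula follows from Olsson's product expression.
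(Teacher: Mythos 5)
Your Step (1) is sound and is exactly how the paper starts: $\Irr_0(\bar B)$ consists of the characters in $\Irr_0(\tB)$ having $Z:=Z(\GL_n(\eps q))$ in their kernel, and the heights match because $|Z|_\ell=\ell^a$ cancels from $\chi(1)|D|/|G|_\ell$. The fatal problem is Step (2). You assert that $\chi\in\cE(\tG,t)$ is trivial on $Z$ precisely when the label $t$ is fixed under tensoring by the linear characters of $\tG/\SL_n(\eps q)$ of $\ell$-power order, i.e.\ when the eigenvalue multiset of $t$ is invariant under multiplication by $\mu_{\ell^a}$. That is the wrong duality. Invariance of the eigenvalue multiset under $\mu_{\ell^a}$ governs the splitting number $A_t$ on restriction to $\SL_n(\eps q)$ (the condition used in Theorem~\ref{thm:k0 SL}); triviality on $Z$ is instead governed by the central character of $\chi$, hence by the image of $t$ in $\tG^*/[\tG^*,\tG^*]$, i.e.\ by the condition $\det(t)=1$ --- a condition on the \emph{product} of the eigenvalues, not on the stabiliser of their multiset. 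The two are essentially opposite: for $n=\ell$ the height-zero characters of $\tB$ lie over scalar $\ell$-elements $t=\zeta I$; none of these multisets is $\mu_{\ell^a}$-invariant when $a\ge1$, yet exactly the $\ell$ values with $\zeta^\ell=1$ yield $Z$-trivial characters, giving $k_0(\bar B)=\ell^2$ as in Example~\ref{exmp:k(B)}(b). Your criterion would give $k_0(\bar B)=0$; indeed the paper's proof shows that \emph{no} $\chi\in\Irr_0(\tB)$ is fixed by a nontrivial linear character of $\ell$-power order, since a fixed point would be induced from a subgroup of index divisible by $\ell$, contradicting $\ell\nmid\chi(1)$.

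Step (3) is also not salvageable as written: $\prod_i k(\ell^{m+i},a_i)$ is not equal to $\prod_i k(\ell^{a+i},a_i)/\ell^{a-m}$ in general (already for a single digit $a_i=2$ the ratio of corresponding factors is $(\ell^{m+i}+3)/(\ell^{a+i}+3)$, not $\ell^{m-a}$), so ``replace $a$ by $m$ in Olsson's product'' cannot produce the factor $\ell^{-(a-m)}$. The paper's argument sidesteps all combinatorics of the labels: the $\ell^a$ linear characters of $\tG$ of $\ell$-power order act freely on $\Irr_0(\tB)$ by tensoring; their restrictions to $Z_\ell$ form precisely the subgroup of order $\ell^{a-m}$ of characters of $Z_\ell$ trivial on $Z(\SL_n(\eps q))_\ell$; and the central $\ell$-character of every $\chi\in\Irr_0(\tB)$ lies in that subgroup. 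Hence each orbit of size $\ell^a$ contains exactly $\ell^a/\ell^{a-m}=\ell^m$ characters trivial on $Z$, which gives $k_0(\bar B)=k_0(\tB)\,\ell^{m-a}$ at once. To repair your approach you would need to replace the invariance condition by the determinant condition on $t$ and then either run this orbit count or count directly the labels with trivial associated central character.
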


\begin{proof}
We present the argument in the case $\eps=1$, the case $\eps=-1$ again being
entirely analogous due to the agreement of block theoretic invariants
between $\GU_n(q)$ and $\GL_n(q^2)$. As $\PGL_n(q)=\GL_n(q)/Z(\GL_n(q))$ the
characters in $\Irr_0(\bar B)$ are the
characters in $\Irr_0(\tB)$ having $Z(\tG)$ in their kernel, with $\tB$ the
principal $\ell$-block of $\tG=\GL_n(q)$. Let $\chi\in\Irr_0(\tB)$. Then it
lies above a character of $\SL_n(q)$ having $Z:=Z(\SL_n(q))_\ell$ in its kernel
(as all characters above non-trivial central characters of $\ell$-power order
have degree divisible by $\ell$). Furthermore, $\del\otimes\chi\in\Irr_0(\tB)$
for all linear characters $\del$ of $\tG$ of $\ell$-power order. Assume that
$\del\otimes\chi=\chi$ for some $\del\ne1$. Then $\chi$ is induced from some
subgroup of $\tG$ of index divisible by $\ell$, which is not possible as
$\chi(1)$ is prime to $\ell$. Tensoring by the $\ell^a$ linear characters
of $\ell$-power order permutes the various central characters modulo $Z$,
hence exactly $|Z|_\ell=\ell^m$ out of each orbit on $\Irr_0(\tB)$ lie above
the trivial character of $Z(\tG)$, whence $k_0(\bar B)=k_0(\tB)/\ell^{a-m}$. 
\end{proof}

\begin{exmp}   \label{exmp:k(B)}
(a) Assume that $n$ is not divisible by $\ell$. Then in the situation and
notation of Theorem~\ref{thm:k0 SL} we have $g=u=0$, and hence
$$k_0(B)=k_0(\tilde B)/\ell^g\qquad\text{and}\qquad l(B)=l(\tilde B),$$
as expected (as restrictions of characters in $\ell$-element Lusztig series are
irreducible in this case), and $k_0(\bar B)= k_0(\tB)/\ell^a$ in the situation
of Proposition~\ref{prop:k0 PGL}.  \par
(b) Next assume that $n=\ell$; in this case the only proper subgroup $G$ of
$\GL_n(\eps q)$ allowed in Theorem~\ref{thm:k0 SL} is $G=\SL_\ell(\eps q)$.
Here, $g=u=1$, and thus
$$k(B)=\big(k(\ell,a,1,\ell)+(\ell^2-1)\,k(\ell,a,1,1)\big)/\ell^a
  =\big(k(\ell^a,\ell)+\ell^{a+2}-\ell^{a-1}\big)/\ell^a$$
  by \cite[Thm.~5.1]{MkB},
$$k_0(\tB)=k(\ell^{a+1},1)=\ell^{a+1},\ k_0(B)=k_0(\tB)/\ell^a+\ell=2\ell,
  \ k_0(\bar B)=k_0(\tB)/\ell^{a-1}=\ell^2,$$
by Theorem~\ref{thm:k0 SL} and Proposition~\ref{prop:k0 PGL}, and
$$l(B)=k(1,\ell)+\ell-1.$$
\end{exmp}

\subsection{The conjecture for $\SL_n(q)$ and $\SU_n(q)$}
To show our inequalities for blocks of $\SL_n(\eps q)$ we will need to
control the derived subgroup of a Sylow subgroup:

\begin{prop}   \label{prop:Syl SLn}
 For $\ell>2$ with $\ell|(q-1)$, let $\tilde D$ be a Sylow $\ell$-subgroup of
 $\GL_n(q)$ and $D=\tilde D\cap\SL_n(q)$, a Sylow $\ell$-subgroup of $\SL_n(q)$.
 Then
 $$|\tD':D'|=\begin{cases} \ell& \text{ if $n=\ell^f$ for some $f\ge1$},\\
   1& \text{else.}\end{cases}$$
\end{prop}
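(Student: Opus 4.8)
The structure of a Sylow $\ell$-subgroup $\tilde D$ of $\GL_n(q)$ is well understood: writing the $\ell$-adic expansion $n=\sum_i a_i\ell^i$ with $\ell^a=(q-1)_\ell$, one has $\tilde D\cong\prod_i D_{i,\ell^a}^{a_i}$ in the notation of Lemma~\ref{lem:k(D)} (each $D_{i,\ell^a}=C_{\ell^a}\wr C_\ell\wr\cdots\wr C_\ell$ arising as a Sylow subgroup of $\GL_{\ell^i}(q)$, embedded blockwise). Since $D=\tilde D\cap\SL_n(q)$ is the kernel of the determinant map $\det\colon\tilde D\to(\FF_q^\times)_\ell\cong C_{\ell^a}$, the plan is to analyse how this determinant constraint interacts with the derived subgroup. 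The key point is that $\tilde D/D$ is cyclic of order equal to $|\det(\tilde D)|$, which is $\ell^a$ in all cases (the torus $C_{\ell^a}$ in the first factor $D_{0,\ell^a}^{a_0}$, or inside any $D_{i,\ell^a}$, already surjects onto $(\FF_q^\times)_\ell$), so $|\tilde D:D|=\ell^a$ always. Hence the question reduces to computing $|\tilde D':D'|$, and since $D'\le\tilde D'$ with $\tilde D'\le D$ (as $\tilde D/D$ is abelian), we have $|\tilde D':D'|=|\tilde D'D/D|^{-1}\cdot\ldots$ — more precisely $|\tilde D':D'|$ divides $|\tilde D:D|=\ell^a$, and one must pin down the exact value.

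\textbf{Key steps.} First I would reduce to understanding $\tilde D'$ and $D'$ factor by factor, using that for a direct product $\tilde D=\prod_i D_{i,\ell^a}^{a_i}$ the derived subgroup is the product of the derived subgroups of the factors, whereas $D$ is \emph{not} a direct product — it is the preimage of a diagonal-type subgroup under $\prod\det_i$. So $D'$ contains $\prod_i (D_{i,\ell^a}^{a_i})'$ but may be strictly larger, since commutators of elements lying in different direct factors (but both outside the kernel of the relevant determinant) can arise. Second, I would compute $D_{i,m}'$ explicitly: by \cite[Lemma~1.4]{Ol76}, invoked already in Lemma~\ref{lem:k(D)}, $D_{i,m}'$ lies in the base group $D_{i-1,m}^\ell$ of the outermost wreath product and has index $m\ell^i$ therein; in particular $\tilde D/\tilde D'$ is elementary-abelian-by-... of a shape one can read off, and the determinant map factors through $\tilde D/\tilde D'$. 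Third, and this is the crux, I would identify the image of $\det$ restricted to $\tilde D'$: if $\det(\tilde D')=1$ then $D'\le D$ already "sees" the full determinant obstruction and $|\tilde D':D'|$ can be as large as $\ell$; if $\det$ is non-trivial on $\tilde D'$ then $\tilde D'\not\le D$ and the index drops. The arithmetic here is that $\det$ on the base group $C_{\ell^a}\wr C_\ell$ sends the "diagonal" elements of the base to the $\ell$-th power map composed with multiplication — so the relevant subtlety is whether $\ell^i$ divides $n$ cleanly, i.e.\ whether $n$ is a pure power $\ell^f$ (only one factor $D_{f,\ell^a}$, with $a_f=1$) versus a genuine sum of several $\ell$-powers.

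\textbf{Expected main obstacle.} The delicate part is the case $n=\ell^f$: here $\tilde D=D_{f,\ell^a}$ is a single iterated wreath product, and I must show that exactly one factor of $\ell$ in $|\tilde D:D|=\ell^a$ is "absorbed" into the passage from $\tilde D'$ to $D'$, i.e.\ $|\tilde D':D'|=\ell$. Concretely, one element of determinant a generator of $C_{\ell^a}$ can be chosen inside $\tilde D'$ raised to a suitable $\ell$-power — the determinant of a commutator $[x,y]$ with $x,y$ ranging over $D_{f,\ell^a}$ hits precisely the $\ell$-th powers in $C_{\ell^a}$, because permuting the $\ell$ blocks of $C_{\ell^{a}}^{\ell}$ cyclically and taking commutators produces differences $t_jt_{j+1}^{-1}$ of torus coordinates whose product (= determinant contribution) telescopes, while a single ``long'' commutator across the wreath action contributes an overall $\ell$-th power. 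So $\det(\tilde D')=(\FF_q^\times)_\ell^\ell$, of index $\ell$ in $\det(\tilde D)=(\FF_q^\times)_\ell$, giving $|\tilde D:\tilde D'D|=\ell$; combined with $|\tilde D:D|=\ell^a$ and $|\tilde D:\tilde D'|$, a diagram-chase yields $|\tilde D':D'|=\ell$. In the non-pure-power case, $n=\sum a_i\ell^i$ with at least two nonzero $a_i$ (or some $a_i\ge2$), there is ``room'' to realise a full generator of the determinant as a product of determinants of elements lying in distinct factors — without needing a commutator — so $\tilde D'D=\tilde D$, forcing $|\tilde D':D'|=|\tilde D:D|\cdot|\tilde D:\tilde D'|^{-1}\cdot|\ldots|$ to collapse to $1$; here the bookkeeping that $\det$ is surjective on $\tilde D'$ is the thing to verify carefully, most cleanly by exhibiting explicit block-diagonal and block-permutation matrices whose commutators have the required determinants. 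I would organise the write-up around these two cases and lean on \cite[Lemma~1.4]{Ol76} for the internal structure of the wreath factors.
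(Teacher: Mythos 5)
Your central step does not work. You propose to compute the image of $\det$ restricted to $\tD'$ and to distinguish the cases $\det(\tD')=1$ versus $\det(\tD')\ne1$. But $\det$ is a homomorphism into the abelian group $\FF_q^\times$, so it kills every commutator: $\det(\tD')=1$ always, hence $\tD'\le D$ in \emph{all} cases, and the assertions ``$\det(\tD')=(\FF_q^\times)_\ell^{\,\ell}$'', ``$\tD'\not\le D$'' and ``$\tD'D=\tD$'' are all false. Consequently your concluding ``diagram chase'' cannot close: since $|\tD'\colon D'|=|\tD\colon D|\cdot|D\colon D'|\,/\,|\tD\colon\tD'|$, you would still need the order of the abelianisation of $D$, which is exactly as hard as the original question and is not produced by anything you set up. The genuine content of the proposition is not where the determinant sends $\tD'$ (nowhere), but which elements of $\tD'$ can be written as products of commutators of \emph{determinant-one} elements.

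The paper's proof addresses precisely this point. When the $\ell$-adic expansion of $n$ has more than one nonzero digit, or a digit $\ge2$, the group $\tD$ is a direct product of at least two wreath factors and one checks directly that the derived subgroup of the subdirect product $D$ already equals $\tD'$: a commutator supported in one factor can be realised by elements of $D$ after compensating their determinants in another factor. When $n=\ell^f$, so that $\tD=D_{f-1}\wr C_\ell$ is a single iterated wreath product, the paper uses \cite[Lemma~1.4]{Ol76} to identify $\tD'$ with the set of tuples $(x_1,\dots,x_\ell)$ in the base group satisfying $x_1\cdots x_\ell\in D_{f-1}'$, and then exhibits explicit commutators of elements of $D$ (tuples with entries in $D_{f-1}'$, tuples $(x^\ell,x^{-\ell},1,\dots,1)$, tuples $(x,x^{-1},x^{-1},x,1,\dots,1)$) to show that $D'$ is cut out inside $\tD'$ by one further condition, namely that $x_1x_2^2\cdots x_{\ell-1}^{\ell-1}$ be an $\ell$-th power; this condition has index exactly $\ell$. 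That explicit identification of $D'$ is the step you must supply in place of the determinant-image argument.
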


\begin{proof}
Let $n=\sum_{i\ge0}a_i\ell^i$ be the $\ell$-adic decomposition of $n$, and
$\ell^a$ the precise power of $\ell$ dividing $q-1$. Then
$\tD=\prod_{i\ge0} D_i^{a_i}$, with $D_i=D_{i,\ell^a}$, and $D$ is the
subdirect product of these factors defined by the determinant condition.
Now if that product has more than one factor, it is straightforward to see
that the derived subgroup of the subdirect product $D$ agrees with that of
$\tD$. Hence now assume that $n=\ell^i$ is a power of $\ell$. So
$\tD= D_{i-1}\wr C_\ell$. Clearly, $\tD'$ and thus also $D'$ lies in the base
group. We claim that $D'$ consists of all $\ell$-tuples $(x_1,\ldots,x_l)$ of
elements of $D_{i-1}$ such that $x_1\cdots x_\ell\in D_{i-1}'$ and
$x_1x_2^2\cdots x_{\ell-1}^{\ell-1}$ is an $\ell$th power.
Indeed, $D'$ contains any tuple with entries in $D_{i-1}'$, as well as
every tuple of the form $(x^\ell,x^{-\ell},1,\ldots,1)$ (being the commutator
of $(x^{\ell-1},x^{-1},\ldots,x^{-1})$ with an $\ell$-cycle), as well as all
tuples of the form $(x,x^{-1},x^{-1},x,1,\ldots,1)$ (being the commutator of
$(x,x^{-1},1,\ldots,1)$ with an $\ell$-cycle). According to
\cite[Lemma~1.4]{Ol76}, $\tD'$ consists of all $\ell$-tuples with
$x_1\cdots x_\ell\in D_{i-1}'$, and the group described above has
index~$\ell$ therein.
\end{proof}

\begin{thm}   \label{thm:SLn}
 Let $H=G/Z$ with $G\in\{\SL_n(q),\SU_n(q)\}$ and $Z\le Z(G)$. Assume that
 $\ell\ge 5$. Then the unipotent $\ell$-blocks of $H$ are not
 counterexamples to~$\Ca$ or~$\Cb$.
\end{thm}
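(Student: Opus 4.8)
The plan is to reduce the statement for $H = G/Z$ to the previously established result for $\GL_n(\eps q)$ and $\GU_n(q)$ (Proposition~\ref{prop:GLn}), tracking how the block invariants change under passing to subgroups and quotients. By the discussion preceding Theorem~\ref{thm:k0 SL}, a unipotent $\ell$-block $B$ of $H$ is dominated by (or lies inside) the principal $\ell$-block of some intermediate group $\SL_n(\eps q) \le G' \le \GL_n(\eps q)$, and its invariants only depend on the weight $w$; so I would first reduce to the principal $\ell$-block of $G'$, using that $\ell$ is odd and that Sylow $\ell$-subgroups of $\SL_n(\eps q)$ and $\GL_n(\eps q)$ differ only by the determinant (so $|\tilde D : D| = \ell^g$ in the notation of Theorem~\ref{thm:k0 SL}). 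As with the unitary case throughout the paper, I would treat $\GU_n(q)$ via the identification of invariants with those of $\GL_n(q^2)$ and reduce to $\eps = 1$.

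The core of the argument is then a bookkeeping comparison. Write $\tilde B$ for the principal block of $\GL_n(\eps q)$ (for which $\Ca$ and $\Cb$ are known by Proposition~\ref{prop:GLn}). For $\Cb$: we have $k(B) \le k(\tilde B)$ (characters of $B$ are constituents of restrictions of characters of $\tilde B$), while $l(B)$ is given explicitly by Theorem~\ref{thm:k0 SL} and is at least $l(\tilde B) = k(d,w) \ge k(1,w)$, and $k(D) \ge k(\tilde D)/\ell^g$ from the index bound plus an orbit-counting argument (or directly from Lemma~\ref{lem:k(D)}); combining these with the known inequality $k(\tilde B) \le l(\tilde B)\,k(\tilde D)$ should give $\Cb$, with a multiplicative slack of $\ell^g$ that needs to be absorbed. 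For $\Ca$: here $k_0(B)$ is computed in Theorem~\ref{thm:k0 SL} as roughly $k_0(\tilde B)/\ell^g$ plus a correction term when $n = \ell^f$, and $k(D')$ is controlled by Proposition~\ref{prop:Syl SLn}, which says $|\tilde D' : D'| = \ell$ exactly when $n$ is a power of $\ell$ and $1$ otherwise. So in the generic case $k(D') = k(\tilde D')$ while $k_0(B) = k_0(\tilde B)/\ell^g$, and $k(B) \le k(\tilde B)$, so $\Ca$ for $B$ would follow from $k(\tilde B)/k_0(\tilde B) \le k(\tilde D')/\ell^g$ — i.e. one needs a factor $\ell^g$ better than the bound proved for $\GL_n$. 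This is exactly where the sharpened bound ``$\ge 2$'' recorded at the end of the proof of Proposition~\ref{prop:GLn}, together with the slightly more careful estimates from Lemma~\ref{lem:k(B)}, should provide the necessary margin; in the case $n = \ell^f$ one must additionally account for the extra $\ell^{a+f-g}$ height-zero characters and the loss of a factor $\ell$ in $k(D')$, which I expect to handle by plugging in the closed formulas and checking the inequality holds once $w$ (equivalently $n/d$) is large, with the finitely many small weights verified directly (as in Example~\ref{exmp:k(B)}, the case $n = \ell$).

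For the passage from $G'$ down to the quotient $H = G/Z$: a quotient $\bar D = D/(Z \cap D)$ of a defect group satisfies $k(\bar D) \le k(D)$ and $k(\bar D') \le k(D')$, and by Proposition~\ref{prop:k0 PGL} (and its proof via tensoring by linear characters) one controls how $k_0$ changes, with $\PGL_n(\eps q)$ being the extreme case. So once the inequalities are established for $B$ as a block of $G'$, the corresponding statements for blocks of $H$ dominated by $B$ follow by the same comparison, possibly invoking Theorem~\ref{thm:abelian} when the relevant defect group becomes abelian (which happens precisely when $w < \ell$).

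The main obstacle I anticipate is not any single conceptual step but the sheer amount of interlocking bookkeeping: three groups ($\GL_n$, $\SL_n$/intermediate, $\PGL_n$) related by subgroup and quotient maps, each of which twists every one of the five invariants $k$, $k_0$, $l$, $k(D)$, $k(D')$ by a power of $\ell$ (namely $\ell^g$, $\ell^{a-m}$, etc.), and the inequalities only hold with these twists carefully matched. In particular, the case $n = \ell^f$ is genuinely tighter — there one simultaneously gains extra height-zero characters and loses a factor $\ell$ in $k(D')$ — so that case will require the closed-form expressions from Theorem~\ref{thm:k0 SL} and Proposition~\ref{prop:Syl SLn} rather than just inequalities, and a short list of small-$w$ exceptions checked by hand. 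The asymptotic part, by contrast, should follow routinely from Lemmas~\ref{lem:k(B)}, \ref{lem:lower} and \ref{lem:k(D)} once the exponents of $\ell$ are collected on each side.
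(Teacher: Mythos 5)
Your overall architecture (reduce to the principal block, compare all five invariants with those of $\GL_n(\eps q)$, then pass to quotients) matches the paper's, but there is a genuine gap at the single step that carries the whole argument: you estimate $k(B)$ only by $k(B)\le k(\tB)$ and then hope to absorb the resulting multiplicative slack of $\ell^g=\ell^a=(q-\eps)_\ell$ by the factor--$2$ sharpening recorded at the end of the proof of Proposition~\ref{prop:GLn} together with ``slightly more careful estimates''. That cannot work: $a$ is unbounded, and the $\GL_n$ inequalities are essentially tight up to a factor independent of $a$ (for instance, for $w=\ell$ and $d=1$ one has $k(\tB)/k_0(\tB)\approx \ell^{a\ell-a-1-cw/\ln\ell}$ against $k(\tD')\approx\ell^{a\ell-a-1}$, so the margin is a constant, not $\ell^a$). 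No refinement of the $\GL_n$-side estimates of the type you invoke can produce the missing factor $\ell^a$.

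What actually saves the day in the paper is that $k(B)$ itself drops by a factor of $\ell^a$: tensoring by the $\ell^a$ linear characters of $\tG$ of $\ell$-power order permutes $\Irr(\tB)$ with generically regular orbits, and the precise count (quoted from \cite[Thm.~5.21]{MkB}) gives
$k(B)\le\bigl(k(\ell,a,1,w)+\sum_{j=1}^m p_\ell(w/\ell^j)\ell^{2j+aw/\ell^j}\bigr)/\ell^a$,
i.e.\ roughly $k(\tB)/\ell^a$ plus explicit correction terms accounting for short orbits and for characters that split on restriction. This gain of $\ell^a$ in the numerator exactly cancels the losses $k(D)\ge k(\tD)/\ell^a$, $k_0(B)\ge k_0(\tB)/\ell^a$ and $k(D')\ge k(\tD')/\ell$ on the other side, after which only the correction terms need to be dominated (via Lemma~\ref{lem:k(B)} and the bound $p_\ell(w/\ell^j)\le\ell^{\binom{u+1-j}{2}}$), with a short list of exceptions $\ell^a=w\in\{5,7\}$ etc.\ checked from the closed formulas of Theorem~\ref{thm:k0 SL} and Example~\ref{exmp:k(B)}. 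Your treatment of $l(B)$, $k_0(B)$, $k(D)$, $k(D')$ and of the passage to $H=G/Z$ is in line with the paper; the missing ingredient is this sharp upper bound for $k(B)$, and without it the proof does not close.
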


\begin{proof}
We consider $G=\SL_n(q)$ as a normal subgroup of $\tilde G:=\GL_n(q)$. As
$\tG/G$ is cyclic, restriction of characters from $\tilde G$ to $G$ is
multiplicity-free. Moreover, as pointed out in the proof of
Theorem~\ref{thm:k0 SL} all characters in $\ell$-element Lusztig series
restrict irreducibly unless $\ell$ divides $\gcd(n,q-1)$.   \par
Let $B$ be a unipotent $\ell$-block of $G$ with defect group $D$. Then there
is a unipotent $\ell$-block $\tB$ of $\tilde G$ covering $B$, with defect
group $\tD\ge D$, and by Proposition~\ref{prop:GLn} both inequalities are
satisfied for $\tB$.
\par
First assume that $\ell$ does not divide $q-1$, thus $\tD=D\le G$. As
$\Irr(\tB)\subseteq\cE_\ell(\tG,1)$ the preceding discussion shows that all
$\chi\in\Irr(\tB)$ restrict irreducibly to $G$. The various characters of
$\tG$ with the same restriction to $G$ lie in the Lusztig series of the $q-1$
central $\ell'$-elements of $\tG^*=\GL_n(q)$ and thus in pairwise distinct
$\ell$-blocks of $\tG$. So we obtain $k(B)=k(\tB)$, $k_0(B)=k_0(\tB)$, and also
$l(B)=l(\tB)$ since by \cite{GH91} the unipotent characters form a basic set
for the unipotent blocks of $G$. In particular the conjecture holds for the
block $B$ of $\SL_n(q)$. Furthermore, $|Z(G)|$ is a divisor of $q-1$ and thus
prime to $\ell$ in this case, so all characters in $\Irr(B)$ have $Z(G)$ in
their kernel, and the claim also follows for $H=G/Z$ for any $Z\le Z(G)$.
\par
So now assume that $\ell|(q-1)$. Then all unipotent characters of $G$ lie in
the principal $\ell$-block (see e.g. \cite[Thm.]{CE94}), so $B$ has weight
$w=n$ and any defect group $D$ of $B$ is a Sylow $\ell$-subgroup of $G$. Let
us set $\ell^m:=\gcd(n,q-1)_\ell=\min\{w_\ell,\ell^a\}=|Z(G)|_\ell$.
Now note that $k(D)\ge k(\tD)/\ell^a\ge \ell^{a(w-1)-w/(\ell-1)}$ and
$k(\bar D)\ge k(D)/\ell^m\ge\ell^{a(w-1)-w/(\ell-1)-m}$, where $\bar D$ is
the image of $D=\tilde D\cap\SL_n(q)$ in $\PSL_n(q)$. Furthermore we have that
$l(B)\ge l(\tB)=k(1,w)$ since the restrictions of unipotent characters of
$\SL_n(q)$ to $\ell$-regular classes are linearly independent (or by
Theorem~\ref{thm:k0 SL}). To estimate $k(B)$, first observe that for any
$x\ge1$
$$k(\ell,a,1,x)=\sum_{\bw}k(\ell^a,w_0)\prod_{i\ge1}k(\ell^a-\ell^{a-1},w_i)
  \le \sum_{\bw}\ell^{aw_0}\prod_{i\ge1}\ell^{aw_i}
  \le p_\ell(x)\ell^{ax}$$
by Lemma~\ref{lem:k(a,b)}(a), where $\bw$ runs over $\ell$-compositions of~$x$.
Thus from \cite[Thm.~5.21]{MkB} we find
$$k(B)\le
  \Big(k(\ell,a,1,w)+\sum_{j=1}^m p_\ell(w/\ell^j)\ell^{2j+aw/\ell^j}\Big)
  /\ell^a.
$$
By \cite[Lemma~5.2]{MkB}, $p_\ell(w/\ell^j)\le\ell^{\binom{u+1-j}{2}}$ with
$u=\flr{\log_\ell w}$, so that with the estimates in Lemma~\ref{lem:k(B)} we
obtain
$$k(B)\le
  \Big(\ell^{aw-cw/\ln(\ell)+\binom{u+1}{2}}
    +\sum_{j=1}^m\ell^{2j+aw/\ell^j+\binom{u+1-j}{2}}\Big)/\ell^a
$$
where $c=0.9$ for $a>1$ and $c=0.57$ for $a=1$.
Now
$$2j+aw/\ell^j+\binom{u+1-j}{2}\le aw-cw/\ln(\ell)+\binom{u+1}{2}-2-j$$
for $j\ge1$ and all relevant $\ell,w$, unless $\ell^a=w\in\{5,7\}$, so
$$\begin{aligned}
  k(B)&\le \ell^{a(w-1)-cw/\ln(\ell)+\binom{u+1}{2}}
    \big(1+\sum_{j\ge1}\ell^{-2-j}\big)\\
  &\le\ell^{a(w-1)-cw/\ln(\ell)
    +\binom{u+1}{2}}\big(1+\frac{1}{\ell^2(\ell-1)}\big)
  \le\ell^{a(w-1)-cw/\ln(\ell)+\binom{u+1}{2}+d_\ell}
  \end{aligned}$$
where $d_\ell:=1/(\ell^2(\ell-1)\ln(\ell))$, with the same exceptions. As
certainly $k(\bar B)\le k(B)$ we thus find
$$\begin{aligned}
  k(\bar D)l(\bar B)/k(\bar B)
  \ge& \ell^{cw/\ln(\ell)-w/(\ell-1)-m-d_\ell}
\end{aligned}$$
which is bigger than~1 unless $\ell^a=w\in\{5,7\}$. (Observe that $m\le a$
always.) In the excluded cases, by Example~\ref{exmp:k(B)}(b) we have
$k(B)=(k(5,5)+5^3-1)/5=126$, while $l(\bar B)=k(1,5)=7$,
$k(\bar D)\ge k(D)/5= 149/5$ (respectively $k(B)=(k(7,7)+7^3-1)/7=1821$,
$l(\bar B)=k(1,7)=15$, $k(\bar D)\ge k(\tD)/49=117697/49$).
\par
As for $\Ca$, first we have by Proposition~\ref{prop:Syl SLn}
$$k(\bar D')\ge k(D')/\ell^m\ge k(\tD')/\ell^{m+1}\ge
  \ell^{aw-w/(\ell-1)-\sum a_i(a+i-\ell/(\ell-1))-m-1}$$
and
$$k_0(\bar B)=k_0(B)\ge k_0(\tilde B)/\ell^a\ge \ell^{\sum a_i(a+i-1)-a}$$
by Theorem~\ref{thm:k0 SL}, so
$$k(\bar D')k_0(\bar B)/k(\bar B)\ge
\ell^{cw/\ln(\ell)+\sum a_i/(\ell-1)-w/(\ell-1)-m-1-\binom{u+1}{2}-d_\ell}.
$$
When $a>1$ this is at least~1 unless $w=\ell\in\{5,7\}$. Now for $w=\ell$ it is
easily seen that $\bar D'$ is abelian of order $\ell^{a\ell-a-2}$, a factor of
$\ell$ bigger than our general estimate, and the desired inequality follows.
\par
Finally, when $a=1$ then we conclude with the general estimate unless
$\ell=w\le19$, or $\ell=7$, $w\le14$, or $\ell=5$, $w\le35$. The case
$\ell=w>7$ is handled as when $a>1$, and this clearly also applies when
$w=k\ell$ with $k\le\ell-1$, so we are left with $w=\ell=7$ and $\ell=5$,
$w\in\{5,25\}$. These three cases are easily checked directly.
\par
Again, the proof for $\SU_n(q)$ is entirely similar, with $d$ now denoting the
order of $-q$ modulo~$\ell$.
\end{proof}

\subsection{Groups of classical type}

We start off by determining the number of height zero characters in
unipotent $\ell$-blocks of classical type groups. Let $q$ be a prime power and
$\ell\ne2$ an odd prime not dividing $q$. We write $d=d_\ell(q)$ for the order
of $q$ modulo~$\ell$ and set $d':=d/\gcd(d,2)$. First let $G_n(q)$ be one of
$\Sp_{2n}(q)$ or $\SO_{2n+1}(q)$ with $n\ge2$, and $\bG_n$ the corresponding
simple algebraic group over $\overline{\FF}_q$. The unipotent $\ell$-blocks of
$G_n(q)$ are parametrised by $d$-cuspidal pairs in $G_n(q)$ (see
\cite[Thm.]{CE94}), that is, by pairs $(\bL,\la)$ (up to conjugation) where
$\bL$ is a $d$-split Levi subgroup of $\bG_n$ with
$L=\bL^F\cong G_{n-wd'}(q)\times T_d^w$ for some $w\ge0$, and $\la$ is a
$d$-cuspidal unipotent character of $L$, and hence of $G_{n-wd'}(q)$.
Here, $T_d$ denotes a torus $T_d\cong \GL_1(q^d)$ if $d$ is odd, respectively
$T_d\cong \GU_1(q^{d'})$ if $d=2d'$ is even. 

Now let $G_n^\eps(q)=\SO_{2n}^\eps(q)$, with
$\eps\in\{\pm\}$, $n\ge4$, be an even-dimensional orthogonal group. (Here, as
customary, we write $\SO_{2n}$ for the connected component of the identity in
the general orthogonal group $\GO_{2n}$.) The unipotent $\ell$-blocks of
$G_n^\eps(q)$ are again parametrised by $d$-cuspidal pairs $(\bL,\la)$, where
$L=G_{n-wd'}^\delta(q)\times T_d^w$, with $T_d$ as above,
and $\delta=\eps$ if $d$ is odd or $w$ is even, and $\delta=-\eps$ else, and
$\la$ is a $d$-cuspidal unipotent character of $L$. In either case we write
$b(\bL,\la)$ for the corresponding block and call $w$ its \emph{weight}. The
number $k(b(\bL,\la))$ of characters was determined in \cite[Prop.~5.4]{MkB}.
\par
By \cite[Thm.~4.4(ii)]{CE94} the defect groups of $b(\bL,\la)$ are isomorphic
to Sylow $\ell$-subgroups of $C_G([\bL,\bL])$. For $G=\Sp_{2n}(q)$ we have
$[\bL,\bL]=\Sp_{2(n-wd')}$ has centraliser $\Sp_{2wd'}(q)$ in $G$; for
$G=\SO_{2n+1}(q)$ we have $[\bL,\bL]=\SO_{2(n-wd')+1}$ with centraliser
$\GO_{2wd'}^\pm(q)$ in $G$ where the ``+'' sign occurs if and only if $d$ is
odd; and for $G=\SO_{2n}^\eps(q)$, $C_G([\bL,\bL])=\GO_{2wd'}^{\eps\delta}(q)$.
Observe that by the parity condition on the sign $\eps\delta$, a Sylow
$\ell$-subgroup of $\GO_{2wd'}^{\eps\delta}(q)$ is also a Sylow
$\ell$-subgroup of $\SO_{2wd'+1}(q)$. In any case, a Sylow $\ell$-subgroup $P$
of $C_G([\bL,\bL])$ is isomorphic to the wreath product $C_{\ell^a}\wr S$, with
$\ell^a$ the precise power of $\ell$ dividing $q^d-1$ and $S$ a Sylow
$\ell$-subgroup of the complex reflection group $G(2d',1,w)$. Now by its
definition $d'$ is not divisible by $\ell$, and thus a Sylow $\ell$-subgroup
of the wreath product $G(2d',1,w)\cong C_{2d'}\wr\fS_w$ is isomorphic to a
Sylow $\ell$-subgroup of $\fS_w$. So $P$ is isomorphic to a Sylow
$\ell$-subgroup of $\GL_{dw}(q)$ (when $d$ is odd) or $\GU_{d'w}(q)$ (if $d$ is
even).
\par
Let $\tilde B$ be a block of
$\GO_{2n}^\eps(q)$ lying above the unipotent $\ell$-block $B=b(\bL,\la)$ of
$\SO_{2n}^\eps(q)$. Then either $\tilde B$ lies above a unique unipotent block
of $\SO_{2n}^\eps(q)$, in which case the tensor product of $\tilde B$ with the
non-trivial linear character of $\GO_{2n}^\eps(q)$ is another block above $B$,
or else the cuspidal pair $(\bL,\la)$ is such that $\la$ is labelled by a
degenerate symbol, in which case $\tilde B$ lies above the two blocks
parametrised by the two unipotent characters labelled by this degenerate symbol.
In either case, the unipotent characters in $\tilde B$ are in bijection with
the irreducible characters of $G(2d',1,w)$.

\begin{thm}   \label{thm:k0 BCD}
 Let $G$ be one of $\Sp_{2n}(q)$ ($n\ge2$), $\SO_{2n+1}(q)$ ($n\ge3$), or
 $\GO_{2n}^\pm(q)$ ($n\ge4$), let $\ell\ne2$ be a prime not dividing $q$ and
 let $B$ be a unipotent $\ell$-block of $G$ of weight~$w$. Let $d=d_\ell(q)$,
 $d':=d/\gcd(d,2)$ and write $\ell^a$ for the precise power of $\ell$ dividing
 $q^d-1$. Then
 $$k_0(B)=\prod_{i\ge0} k\big((2d'+(\ell^a-1)/2d')\ell^i,a_i\big),$$
 where $w=\sum_{i\ge0}a_i\ell^i$ is the $\ell$-adic decomposition of $w$, and
 $$l(B)=k(2d',w).$$
\end{thm}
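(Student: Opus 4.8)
The plan is to reduce everything to the combinatorics of symbols and of the associated complex reflection group $G(2d',1,w)$, for which the block-theoretic bookkeeping is already available in Broué--Malle--Michel and in \cite{MkB}. First I would recall that by \cite[Thm.]{CE94} the unipotent characters in $B=b(\bL,\la)$ are in natural bijection with the irreducible characters of the relative Weyl group $W_G(\bL,\la)\cong G(2d',1,w)=C_{2d'}\wr\fS_w$, and that under this bijection the $\ell$-adic (i.e.\ $d$-) valuation of the degree of a unipotent character is governed by the valuation of the corresponding degree of $G(2d',1,w)$, via the formula for generic degrees of $d$-Harish-Chandra series. Concretely, the $\ell$-part of $\chi(1)$ for $\chi$ in the series of $(\bL,\la)$, relative to the $\ell$-part of the block, is controlled by the ``$d$-core tower'' of the $2d'$-multipartition of $w$ labelling the corresponding character of $G(2d',1,w)$; the $d$-defect of that multipartition translates into the height of $\chi$ in $B$. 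This is exactly the mechanism already used implicitly in the computation of $k(b(\bL,\la))$ in \cite[Prop.~5.4]{MkB}.

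For the formula $l(B)=k(2d',w)$: as recalled in the discussion preceding the theorem, the unipotent characters form a basic set for the unipotent blocks (by \cite{GH91}), so $l(B)$ equals the number of unipotent characters in $B$, which equals $|\Irr(G(2d',1,w))|$. A standard count (partitions of $w$ into $2d'$ ordered parts, i.e.\ $2d'$-multipartitions of $w$) gives $|\Irr(C_{2d'}\wr\fS_w)|=k(2d',w)$, establishing the second identity immediately. (In the even-orthogonal case one must note that although an individual block $B$ of $\SO_{2n}^\eps(q)$ may pick up only ``half'' of a degenerate symbol, the total count of unipotent characters attached to the cuspidal datum $(\bL,\la)$ is still $k(2d',w)$ once one sums over the blocks of $\SO_{2n}^\eps(q)$ lying under a fixed block $\tilde B$ of $\GO_{2n}^\eps(q)$; this is exactly the last paragraph before the theorem statement, so no separate argument is needed for $l(B)$.)

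For $k_0(B)$: by the translation above, $k_0(B)$ equals the number of $2d'$-multipartitions $\lambda$ of $w$ whose associated character of $G(2d',1,w)$ lies ``at the top'' of the $d$-Harish-Chandra series, equivalently whose $\ell$-core tower/$d$-quotient places it at height zero. Passing through the $\ell^a$-to-$1$ scaling by $\ell^a=(q^d-1)_\ell$ and using the $\ell$-adic decomposition $w=\sum_i a_i\ell^i$, the multiplicativity of the counting function over $\ell$-levels yields a product over $i\ge0$ of the number of ``height-zero'' $((2d'+(\ell^a-1)/2d')\ell^i)$-multipartitions of $a_i$, which is just $k\big((2d'+(\ell^a-1)/2d')\ell^i,a_i\big)$ since for $a_i<\ell$ every multipartition of $a_i$ is automatically at height zero — this is exactly the shape of \cite[p.~46]{Ol84} in type $A$ and of the analogous wreath-product computation in \cite[Prop.~5.4]{MkB}. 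The main obstacle is making precise the claim that the $\ell$-part of $\chi(1)/|G|_\ell$ for $\chi$ in the $(\bL,\la)$-series is computed by the $d$-weight of the corresponding $G(2d',1,w)$-character with the correct normalisation in all three families (symplectic, odd orthogonal, even orthogonal), including the degenerate-symbol subtlety in type $D$ and the sign-of-$\delta$ bookkeeping already isolated in the discussion before the theorem; once that dictionary is in place the count is the same wreath-product count as in the linear case, and one finishes by invoking \cite[Prop.~5.4]{MkB} together with Lemma~\ref{lem:k(a,b)} to identify the resulting sum-over-compositions with the stated product.
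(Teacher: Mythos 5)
Your treatment of $l(B)$ is exactly the paper's: basic sets via \cite{GH91} plus the bijection between unipotent characters in $B$ and $\Irr(G(2d',1,w))$, with the degenerate-symbol issue absorbed by working in $\GO_{2n}^\pm(q)$. That half is fine.

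For $k_0(B)$ there is a genuine gap. You describe $k_0(B)$ as ``the number of $2d'$-multipartitions of $w$ whose associated character of $G(2d',1,w)$ lies at the top of the $d$-Harish-Chandra series,'' i.e.\ you count height-zero characters only among the unipotent characters of $B$. That set has at most $k(2d',w)$ elements and can never produce the parameter $2d'+(\ell^a-1)/2d'$ in the stated formula: the summand $(\ell^a-1)/2d'$ comes from the non-unipotent characters of $B$, namely those in the Lusztig series $\cE(G,t)$ for non-trivial $\ell$-elements $t\in G^*$, which the Brou\'e--Michel decomposition forces into $B$ and which contribute the bulk of $\Irr_0(B)$. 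The paper's proof parametrises \emph{all} of $\Irr(B)$ by pairs $(t,\psi)$ with $t$ an $\ell$-element and $\psi$ the Jordan correspondent in the unipotent block of $C=C_{\bG^*}(t)$ attached to $(\bL,\la)$, and then shows that $\chi$ has height zero if and only if (i) a Sylow $\ell$-subgroup of $C^*$ is a Sylow $\ell$-subgroup of $C_G([\bL,\bL])$ and (ii) $\psi$ has height zero in its block of $C$; these are precisely the conditions appearing in Michler--Olsson's computation for $\GL_{2d'w}(q)$ \cite[Prop.~2.13]{MO83}, whence the product formula. Your ``$\ell^a$-to-$1$ scaling'' remark gestures at the right answer but is not an argument; the missing idea is the height-zero criterion on the pair $(t,\psi)$ and the resulting reduction to the general linear case. (Your ``main obstacle,'' the generic-degree dictionary for the $d$-series, only handles the unipotent part of the count.)
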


\begin{proof}
The characters of $G$ in a unipotent $\ell$-block $B=b(\bL,\la)$ are
parametrised in \cite[Prop.~5.4 and~5.5]{MkB}: Let $\chi\in\Irr(B)$ and
$t\in G^*$ an
$\ell$-element such that $\chi\in\cE(G,t)$. Then, up to conjugation we must
have that $\bL$ is a $d$-split Levi subgroup of the dual of $\bC=C_{\bG^*}(t)$,
and the Jordan correspondent $\psi\in\cE(C,1)$ of $\chi$ also lies in the
unipotent block $B_C$ of $C$ parametrised by $(\bL,\la)$. For $\chi$ to be of
height zero, it then follows from the degree formula for Jordan decomposition
that a Sylow $\ell$-subgroup of $C^*$ has to be a Sylow $\ell$-subgroup of
$C_G([\bL,\bL])$, and furthermore, $\psi$ has to be of height zero in $B_C$.
This imposes exactly the same conditions on $t,\psi$ as in the proof of the
corresponding result \cite[Prop.~2.13]{MO83} for blocks of $\GL_{2d'w}(q)$, and
thus we can conclude as there.
\par
The statement about $l(B)$ follows as the unipotent characters form a basic
set for the unipotent blocks \cite{GH91}, and the unipotent characters in $B$
are in bijection with the irreducible characters of $G(2d',1,w)$ (see
\cite{BMM,CE94}), of which there are precisely $k(2d',w)$.
\end{proof}

\begin{thm}   \label{thm:class}
 Let $H$ be quasi-simple of classical Lie type in characteristic~$p$ and
 assume that $\ell\ne2,p$. Then the unipotent $\ell$-blocks of $H$ are not
 counterexamples to~$\Ca$ or~$\Cb$.
\end{thm}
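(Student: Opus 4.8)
The plan is to reduce the general statement to the three cases already analysed, namely linear/unitary groups (handled in Theorem~\ref{thm:SLn}), the non-unipotent isolated series (Proposition~\ref{prop:Enguehard}), and the genuinely new work needed for types $B$, $C$, $D$. Since $H$ is quasi-simple of classical type, by Proposition~\ref{prop:exc cover} we may assume $H$ is \emph{not} an exceptional covering group, so $H=G/Z$ with $G=\bG^F$ simply connected and $Z\le Z(G)$. For type $A$ the claim is exactly Theorem~\ref{thm:SLn}, so we may assume $\bG$ is of type $B_n$, $C_n$ or $D_n$. The hypothesis $\ell\ne2$ means $\ell$ is good for $\bG$ and does not divide $|Z(G)|$ (except possibly a factor $2$, which $\ell$ misses), so the unipotent $\ell$-blocks of $G$ may be studied directly and then pushed down to $H$; the passage $G\to H$ only decreases $k(B)$, $k_0(B)$ roughly by the order of the relevant central quotient, while $k(D')$, $k(D)$ decrease by at most that same factor, so it suffices to prove the inequalities for $G$ with a little room to spare, exactly as in the proof of Theorem~\ref{thm:SLn}.

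So the core is: let $B=b(\bL,\la)$ be a unipotent $\ell$-block of $G\in\{\Sp_{2n}(q),\SO_{2n+1}(q),\GO_{2n}^\pm(q)\}$ of weight $w$. By the defect group analysis recalled before Theorem~\ref{thm:k0 BCD}, a defect group $D$ of $B$ is a Sylow $\ell$-subgroup of $\GL_{dw}(q)$ (if $d$ odd) or $\GU_{d'w}(q)$ (if $d$ even), hence $D\cong\prod_i D_{i,\ell^a}^{a_i}$ with $w=\sum a_i\ell^i$ the $\ell$-adic decomposition and $\ell^a=(q^d-1)_\ell$. Thus Lemma~\ref{lem:k(D)} gives the lower bounds $k(D)\ge\ell^{aw-w/(\ell-1)}$ and $k(D')\ge\ell^{aw-w/(\ell-1)-\sum a_i(a+i-\ell/(\ell-1))}$, literally the same estimates used for $\GL_{dw}(q)$ in Proposition~\ref{prop:GLn}. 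On the other side, Theorem~\ref{thm:k0 BCD} gives $l(B)=k(2d',w)\ge k(1,w)$ and the explicit product formula for $k_0(B)$; and the value of $k(B)=k(b(\bL,\la))$ from \cite[Prop.~5.4]{MkB} is, up to the presence of the parameter $2d'$ in place of $d$, of the shape $k(\ell,a,2d',w)$ in the notation of Lemma~\ref{lem:k(B)}. The point is that $2d'$ is prime to $\ell$, so Lemmas~\ref{lem:k(a,b)}, \ref{lem:k(b,w)}, \ref{lem:lem3} and \ref{lem:k(B)} apply verbatim with $b=2d'+(\ell^a-1)/2d'$, $b_1=(\ell^a-\ell^{a-1})/2d'$, giving $k(B)\le p_\ell(w)\,\ell^{aw-cw/\ln\ell}$ with $c\ge 0.57$, and $p_\ell(w)\le k(1,w)\le l(B)$.

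Assembling: for $\Cb$ we combine $k(B)\le l(B)\,\ell^{aw-cw/\ln\ell}$ with $k(D)\ge\ell^{aw-w/(\ell-1)}$; since $cw/\ln\ell>w/(\ell-1)$ for $\ell\ge5$ this yields $k(B)\le l(B)\,k(D)$, hence $k(B)/l(B)\le k(D)$. For $\Ca$ we use the same $k_0$-versus-$k(D')$ bookkeeping as in Proposition~\ref{prop:GLn}: from the product formula of Theorem~\ref{thm:k0 BCD} and Lemma~\ref{lem:lower}, $k_0(B)\ge (b/\ell^a)^{\sum a_i}\ell^{\sum a_i(a+i-1)}$, and then $k(D')k_0(B)/k(B)\ge\ell^{cw/\ln\ell-w/(\ell-1)-u(u+1)/2+\cdots}$ with $u=\flr{\log_\ell w}$, which exceeds $1$ except for a short explicit list of small $w$ (mirroring the $\ell\in\{5,7\}$, small-$w$ exceptions in Proposition~\ref{prop:GLn}). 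Those finitely many exceptional pairs $(\ell,w)$ — and, separately, the small-rank base cases $n=2,3$ of $\Sp$, $\SO_{2n+1}$ and $n=4$ of $\SO_{2n}^\pm$, where the asymptotic inequalities may fail — are dispatched by substituting the actual values of $k(B)$, $k_0(B)$, $l(B)$ from Theorems~\ref{thm:k0 BCD} and \cite[Prop.~5.4]{MkB} and the actual $k(D)$, $k(D')$. Finally one must reconcile the discrepancy between $G=\SO_{2n}^\eps(q)$ and $\GO_{2n}^\eps(q)$ recorded before Theorem~\ref{thm:k0 BCD}: a block $\tilde B$ of $\GO$ sits above one or two unipotent blocks of $\SO$, so $k(B)\le k(\tilde B)$ and the defect groups agree, letting the $\GO$-estimates transfer to $\SO$ and thence to $H$.

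I expect the main obstacle to be \emph{not} the asymptotics — those are essentially a rerun of the $\GL_n$ analysis with $d$ replaced by $2d'$ — but the careful handling of the descent $G\to H=G/Z$ together with the $\SO$-versus-$\GO$ subtlety and the degenerate-symbol blocks: one has to make sure the factor lost in $k_0(B)$ and $k(B)$ when quotienting by the $\ell$-part of $Z(G)$ is genuinely compensated by the corresponding factor in $k(D')$ (as in Proposition~\ref{prop:Syl SLn} for type $A$, one needs the analogue that the derived subgroup of a Sylow subgroup of $\SO$ has index in that of $\GO$ or of the ambient $\GL_{dw}/\GU_{d'w}$ controlled by a known power of $\ell$), and that the finitely many small-rank and small-weight cases are all covered. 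That case-checking, rather than any conceptual difficulty, is where the real effort lies.
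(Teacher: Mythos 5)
Your overall strategy matches the paper's: reduce to non-exceptional covers, quote Theorem~\ref{thm:SLn} for type $A$, and for types $B$, $C$, $D$ compare the invariants of $b(\bL,\la)$ with those of a general linear/unitary block with parameter $2d'$ in place of $d$. The paper packages this comparison more cleanly: since $2d'$ divides $\ell-1$, one can pick a prime power $q'$ of multiplicative order $2d'$ (resp.\ $2d$ when $d$ is odd) modulo $\ell$, and then the principal $\ell$-block of $\GL_{2d'w}(q')$ has \emph{identical} $k$, $k_0$, $l$ and defect group as $B$, so Proposition~\ref{prop:GLn} applies verbatim with no rerun of Lemmas~\ref{lem:k(a,b)}--\ref{lem:k(B)} and no extra small-rank base cases. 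Your version, which re-derives the estimates with $b=2d'+(\ell^a-1)/2d'$, is the same computation in substance; just note that the relevant hypothesis is $2d'\mid(\ell-1)$ (which holds because $d\mid\ell-1$, $\ell-1$ is even and $d'$ is the odd part when $d$ is odd), not merely that $2d'$ is prime to $\ell$.

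There is, however, one genuine gap, precisely at the point you flag but do not resolve: the passage from $\GO_{2n}^\eps(q)$ to $\SO_{2n}^\eps(q)$. Knowing $k(B)\le k(\tB)$ and that defect groups agree is not enough, because $k_0(B)$ and $l(B)$ may each be as small as half of $k_0(\tB)$ and $l(\tB)$. So one must prove the \emph{strengthened} inequalities $2k(B')\le l(B')\,k(D)$ and $2k(B')\le k_0(B')\,k(D')$ for the comparison $\GL$-block $B'$. The paper supplies exactly this: for~$\Ca$ the proof of Proposition~\ref{prop:GLn} in the case $d>1$ already yields $k(D')k_0(B')/k(B')\ge2$ (this is why that "stronger bound~2" is recorded there), and for~$\Cb$ one uses $l(B')=k(2d',w)\ge k(2,w)\ge 2k(1,w)\ge 2p_\ell(w)$. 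Without these two observations your argument does not close for the even-dimensional orthogonal groups. By contrast, your worry about losing a factor in $k_0$ and $k$ "when quotienting by the $\ell$-part of $Z(G)$" is moot: for types $B$, $C$, $D$ the centre of the simply connected group has $2$-power order and $\ell$ is odd, so all unipotent $\ell$-blocks have the centre in their kernel and the descent to the simple group is an index-$2$ (hence $\ell'$) step on which characters in $\ell$-element Lusztig series restrict irreducibly; no analogue of Proposition~\ref{prop:Syl SLn} is needed outside type $A$.
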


\begin{proof}
By Proposition~\ref{prop:exc cover} we need not concern ourselves with
exceptional covering groups. The special linear and unitary groups were handled
in Theorem~\ref{thm:SLn}. For the other groups of classical type the order of
the centre of any non-exceptional cover $H$ is a 2-power, and all unipotent
$\ell$-blocks have $Z(H)$ in their kernel, so we can restrict to the
case when $H$ is simple. \par
Let us first consider the orthogonal groups $H=\OO_{2n+1}(q)$ with $n\ge2$. Let
$d=d_\ell(q)$, and let $B=b(\bL,\la)$ be the unipotent $\ell$-block of
$G=\SO_{2n+1}(q)$ parametrised by the $d$-cuspidal pair $(\bL,\la)$, with
$\bL$ of semisimple rank $n-wd'$, where $d'=d/\gcd(d,2)$. As recalled above,
the Sylow $\ell$-subgroups of $C_G([\bL,\bL])$ are defect groups of $B$.
In our case, $C_G([\bL,\bL])=\GO_{2wd'}^\pm(q)$, with the ``+''-sign
occurring when $d$ is odd. Now observe that Sylow $\ell$-subgroups of
$\GO_{2wd'}^\pm(q)$ are contained in a subgroup $\GL_{wd'}(q)$ when $d$ is odd,
respectively $\GU_{wd'}(q)$ when $d$ is even. They are hence isomorphic to a
direct product $\prod_{i\ge0}D_i^{a_i}$, where $\sum_{i\ge0}a_i\ell^i$ is the
$\ell$-adic decomposition of $w$ and $D_i=D_{i,\ell^a}$ with
$\ell^a=(q^d-1)_\ell$. Assume that $d$ is odd, so $d'=d$. Choose $q'$ a prime
such that $q'$ has order $2d$ modulo~$\ell$ (which is possible as
$d$ and $\ell$ both are odd). Then the formulas in \cite[Prop.~5.4]{MkB} and
in Theorem~\ref{thm:k0 BCD} show that the principal $\ell$-block $B'$ of
$\GL_{2dw}(q')$ has the same invariants $k(B')$, $k_0(B')$, $l(B')$ and $D$
as the block~$B$. Similarly, if $d=2d'$ is even, then the principal
$\ell$-block of $\GL_{dw}(q)$ has the same invariants as $B$. Thus both $\Ca$
and $\Cb$ for $B$ follow from the corresponding result for the general linear
group in Proposition~\ref{prop:GLn}. The derived subgroup $H=[G,G]$
has index~2 in $G$, and by Lusztig's result \cite[??]{CE} characters in
$\ell$-series
restrict irreducibly to $H$, so $k(B')=k(B)$ for the unipotent $\ell$-block
$B'$ of $H$ covered by $B$, and the other invariants do not change either.
This gives the claim for $H=\OO_{2n+1}(q)$.
\par
The situation for $H=\PSp_{2n}(q)$ is entirely analogous. Finally assume that
$G=\SO_{2n}^\pm(q)$ is an even-dimensional orthogonal group, with $n\ge4$.
Let $B$ be a unipotent $\ell$-block of $G$ of weight~$w$, and $\tB$ a block
of $\GO_{2n}^\pm(q)$ covering $B$. It is shown in \cite[Cor.~5.7]{MkB} that
$k(B)\le k(\tB)$. Now according to our formulas for block invariants, the
block $\tB$ has the same invariants as the principal block $B'$ of
$\GL_{2d'w}(q')$, with $q'$ of multiplicative order $2d'$ modulo~$\ell$. As
$|\GO_{2n}^\pm(q):\SO_{2n}^\pm(q)|=2$ we certainly have $k_0(B)\ge k_0(\tB)/2$
and $l(B)\ge l(\tB)/2$. So $\Ca$ and $\Cb$ hold for $B$ if we can show that
these inequalities hold for $B'$ with $2k(B')$ in place of $k(B')$. That is,
we need to study the proof of Proposition~\ref{prop:GLn}, for the case when
$d>1$. For $\Ca$ we had already obtained this better estimate in the proof.
On the other hand, for $\Cb$ this follows directly, as in fact
$k(d,w)\ge k(2,w)\ge 2k(1,w) \ge 2p_\ell(w)2$ for all $\ell,w$ when $d\ge2$.
Thus the claim holds for the unipotent $\ell$-blocks of $G$.
\par
The restrictions of characters in $\ell$-element series to the derived subgroup
$[G,G]$ are again irreducible, and all of them have the centre in their kernel,
so we reach the desired conclusion for the simple group $[G/Z(G),G/Z(G)]$ as
well.
\end{proof}

We have now shown the assertion of Theorem~1 for groups of classical Lie type:

\begin{cor}   \label{cor:class}
 Let $B$ be an $\ell$-block of a quasi-simple group of classical Lie type,
 with $\ell\ge5$. Then $B$ is not a minimal counterexample to~$\Ca$ or~$\Cb$.
\end{cor}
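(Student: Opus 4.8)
The plan is to organise the argument according to the Lusztig label of the block and to feed in the results already established in this section; the corollary is really an assembly of Proposition~\ref{prop:exc cover}, Theorem~\ref{thm:class}, Lemma~\ref{lem:BDR} and Proposition~\ref{prop:Enguehard}, and the one place that needs care is the reduction to simply connected groups.

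First I would dispose of the boundary cases. If $\ell=p$ is the defining characteristic then the assertion is Corollary~\ref{cor:def char}, so we may assume $\ell\neq p$. By Proposition~\ref{prop:exc cover} we may assume that $H$ is not an exceptional covering group of $H/Z(H)$, so that $H=G/Z$ with $G=\bG^F$ for $\bG$ simple of simply connected type $A$, $B$, $C$ or $D$ and $Z\le Z(G)$. If $B$ is a \emph{unipotent} $\ell$-block of $H$, then Theorem~\ref{thm:class} shows that $B$ is not a counterexample to $\Ca$ or $\Cb$ at all, hence in particular not a minimal one, and we are done.

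So assume $B$ is non-unipotent. By the theorem of Brou\'e and Michel we have $\Irr(B)\subseteq\cE_\ell(G,s)$ (after lifting through the central quotient) for some non-central semisimple $\ell'$-element $s$ of the dual group. I would then split into two cases. If $C_{\bG^*}^\circ(s)$ lies in a proper $F$-stable Levi subgroup of $\bG^*$, then $s$ is not isolated and Lemma~\ref{lem:BDR} — the Bonnaf\'e--Dat--Rouquier Morita equivalence down to that Levi — shows that $B$ is not a minimal counterexample. Otherwise $s$ is isolated; since every prime $\ell\ge5$ is good for a group of classical type (the only possible bad prime being $2$), Proposition~\ref{prop:Enguehard} applies, via Enguehard's reduction theorem for isolated series, and again $B$ is not a minimal counterexample. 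Note that the second case is vacuous in type $A$, where the only isolated semisimple element of $\PGL_n$ is the identity; thus for $\SL_n$ and $\SU_n$ only the non-isolated reduction is needed, while for types $B$, $C$, $D$ the isolated non-central elements are exactly those handled by Proposition~\ref{prop:Enguehard}.

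The delicate point — and where I expect the bookkeeping to lie — is the passage between $G=\bG^F$ and the quasi-simple quotient $H=G/Z$. When $\ell\nmid|Z|$, which is automatic unless $\bG$ is of type $A$ since the non-exceptional Schur multipliers for the other classical types are $2$-groups, an $\ell$-block of $H$ is literally an $\ell$-block of $G$ with $Z$ in its kernel and with the same defect group, so the cited reductions apply verbatim. When $\ell\mid|Z|$, which can occur for $\SL_n$ and $\SU_n$, one must check that the reduction of Lemma~\ref{lem:BDR} descends to the central quotient; this is fine because the Morita equivalence of \cite{BDR17} is equivariant for the relevant diagonal automorphisms and hence for factoring out the central subgroup, and because $|H/Z(H)|=|G/Z(G)|$ while the defect group of the block of $H$ is the image of that of the block of $G$, so that the notion of minimal counterexample is unchanged. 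Assembling these observations proves the corollary, and thereby also the assertion of Theorem~1 for quasi-simple groups of classical Lie type.
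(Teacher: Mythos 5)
Your proposal is correct and follows essentially the same route as the paper: defining characteristic via Corollary~\ref{cor:def char}, unipotent blocks via Theorems~\ref{thm:SLn} and~\ref{thm:class}, and non-unipotent blocks via the dichotomy between non-isolated series (Lemma~\ref{lem:BDR}) and isolated series with $\ell\ge5$ good for classical type (Proposition~\ref{prop:Enguehard}). The extra care you take over the passage from $G=\bG^F$ to the central quotient $H=G/Z$ is a sensible elaboration of points the paper leaves implicit (and handles inside the proofs of Proposition~\ref{prop:Enguehard} and Theorem~\ref{thm:class}), but it does not change the argument.
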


\begin{proof}
By Corollary~\ref{cor:def char} there are no counterexamples when $\ell$ is
the defining characteristic.
By Lemma~\ref{lem:BDR} and Proposition~\ref{prop:Enguehard} the non-unipotent
blocks do not give rise to minimal counterexamples of either~$\Ca$ or~$\Cb$. By
Theorems~\ref{thm:SLn} and~\ref{thm:class} neither do the unipotent blocks.
\end{proof}

\section{Groups of exceptional Lie type in non-defining characteristic}   \label{sec:cross exc}

To complete the proof of Theorem~1 we need to deal with the blocks of
exceptional groups of Lie type in non-defining characteristic~$\ell$.

\subsection{Exceptional groups of small rank}

We first consider the five series of exceptional groups of small rank. The
following result has been communicated to us by Frank Himstedt as a
consequence of his investigation of blocks of the Steinberg triality groups
\cite{H07,HH13}:

\begin{prop}   \label{prop:k(B) 3D4}
 Let $B$ denote the principal $\ell$-block of $\tw3D_4(q)$. Then:
 \begin{enumerate}
  \item[\rm(a)] $k(B)=9+2^{a+1}+(4^{a-1}-1)/3$ for $\ell=2$,
  \item[\rm(b)] $k(B)=7+3^{a+1}+(9^a-1)/4$ for $\ell=3$.
 \end{enumerate}
\end{prop}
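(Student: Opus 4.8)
The plan is to obtain $k(B)$ by counting the irreducible characters that the known $\ell$-modular block partition of $G=\tw3D_4(q)$ places in the principal block, for $\ell\in\{2,3\}$. Recall from the Brou\'e--Michel theorem (\cite[Thm.~9.12(i)]{CE}, as recalled before Lemma~\ref{lem:BDR}) that, since $G^*\cong\tw3D_4(q)$ is self-dual with trivial centre, $\Irr(B)\subseteq\cE_\ell(G,1)=\coprod_t\cE(G,t)$, the union running over $G^*$-classes of $\ell$-elements $t$, and each single series $\cE(G,t)$ lies in one $\ell$-block. Because $\mathbf{G}^*$ is of adjoint type, $C_{\mathbf{G}^*}(t)$ is connected, so Jordan decomposition gives that $|\cE(G,t)|$ equals the number of unipotent characters of $C_{G^*}(t)$. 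Hence $k(B)=\sum_t|\cE(G,t)|$, the sum taken over those $\ell$-elements $t$ with $\cE(G,t)\subseteq\Irr(B)$.

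Next I would run through the $\ell$-elements $t\in G^*$ grouped by the isomorphism type of $C_{G^*}(t)$, using the classification of semisimple conjugacy classes of $\tw3D_4(q)$ and their centralizers, and for each family decide via the explicit block distribution whether the corresponding series lies in $B$. The term $t=1$ contributes the unipotent characters of $G$ lying in $B$, a number independent of $a$, which accounts for part of the constants $9$ and $7$. The $\ell$-elements whose centralizer is isogenous to $\SL_2(q)\times(\text{torus})$ or $\SL_2(q^3)\times(\text{torus})$ --- arising from the two classes of $A_1$-type subsystem subgroups, one over $\FF_q$ and one over $\FF_{q^3}$ --- each contribute a small fixed number of unipotent characters, while the number of such classes meeting $B$ is a power of $\ell$; summing over them produces the terms $2^{a+1}$, respectively $3^{a+1}$. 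The remaining $\ell$-elements of $B$ have toral centralizer and contribute one (semisimple) character apiece; sorting these by which maximal torus occurs and adding the resulting geometric progressions yields $(4^{a-1}-1)/3$, respectively $(9^a-1)/4$. Collecting the three kinds of contribution and simplifying gives the two stated closed forms.

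The real content --- and the reason the result is credited to Himstedt's study of the triality groups \cite{H07,HH13} rather than derived from first principles --- lies in the second step: pinning down exactly which series $\cE(G,t)$ sit in the \emph{principal} $\ell$-block as opposed to in some other unipotent or non-unipotent $\ell$-block. This requires the full $\ell$-block partition of $\Irr(\tw3D_4(q))$ for $\ell=2,3$, which is precisely what emerges from Himstedt's determination of the decomposition matrices of these groups; with that input, the enumeration and summation above are routine. The other point needing care is uniformity in $q\bmod\ell$ (i.e.\ whether $d_\ell(q)=1$ or $2$): the collection of contributing centralizer types differs between these cases, but the final counts --- and hence the formulas --- must be checked to coincide.
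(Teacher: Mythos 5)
First, a point of comparison: the paper does not prove this proposition at all. The formulas are quoted as a personal communication from Himstedt, derived from his work on the decomposition numbers of $\tw3D_4(q)$ \cite{H07,HH13}; the only added content in the paper is the remark that the values printed in \cite{DM87} are wrong. So there is no argument in the text to measure yours against. Your outline (Brou\'e--Michel to reduce to $\cE_\ell(G,1)$, Jordan decomposition to count each series by unipotent characters of centralisers, then enumerate semisimple $\ell$-classes by centraliser type) is indeed the standard and essentially the only route, and you correctly identify that the irreducible input is the block distribution of $\Irr(\tw3D_4(q))$ for $\ell=2,3$.

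That said, as a proof your proposal has a genuine gap: nothing is actually computed. The enumeration of the $\ell$-classes, the count of unipotent characters of each centraliser type, the decision of which series lie in the principal block, and the final summation are all deferred; the terms $9$, $2^{a+1}$, $(4^{a-1}-1)/3$, etc.\ are matched to families of classes by reverse engineering from the answer rather than derived. Until those counts are carried out (or explicitly read off from \cite{H07,HH13}), the statement is not established. Two of your justifications are also inaccurate. Connectedness of centralisers of semisimple elements is a property of \emph{simply connected} groups, not adjoint ones; for $\tw3D_4$ the conclusion you want does hold, but because the component groups $C_{\bG^*}(t)/C_{\bG^*}^\circ(t)$ embed into $Z(\bG_{\SC})\cong(\ZZ/2)^2$, have exponent dividing the order of $t$, and triality acts on $Z(\bG_{\SC})$ with no nontrivial fixed points --- so the rational component group is trivial in every case. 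And a single Lusztig series $\cE(G,t)$ need \emph{not} lie in a single $\ell$-block; only the full union $\cE_\ell(G,1)$ is a union of blocks, so the series of nontrivial $\ell$-elements may a priori be split among the several unipotent blocks (they cannot, however, meet any non-unipotent block, so that alternative in your last paragraph does not arise). This splitting issue is exactly why the explicit block distribution is needed and cannot be bypassed.
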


(We remark that these values are not given correctly in \cite[p.~68]{DM87}.)

\begin{prop}   \label{prop:exc small}
 Let $G$ be one of $\tw2B_2(q^2)$, $^2G_2(q^2)$, $\tw2F_4(q^2)'$, $G_2(q)$ or
 $\tw3D_4(q)$. Then all $\ell$-blocks of $G$ satisfy inequalities~$\Ca$
 and~$\Cb$ for all primes~$\ell$.
\end{prop}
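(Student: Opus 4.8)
The plan is to reduce, prime by prime, to situations that are either covered by the reductions already established or can be checked from explicit character-theoretic data available in the literature and in GAP. For the defining prime the claim is already contained in Corollary~\ref{cor:def char} (and in fact in the discussion at the start of Section~\ref{sec:def char}, since $G_2$, $\tw3D_4$ and the Suzuki/Ree groups all arise from adjoint algebraic groups). So I may assume $\ell$ is a non-defining prime. For each of the five families the generic structure of the Sylow $\ell$-subgroups is well understood: for most non-defining $\ell$ the Sylow $\ell$-subgroups are abelian (this happens precisely when $\ell$ is large relative to the rank, e.g.\ when $\Phi_d(q)$ exactly divides $|G|$ with $d$ not a ``bad'' index), and in that case Theorem~\ref{thm:abelian} disposes of every $\ell$-block at once. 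Thus the real work is confined to the finitely many ``small'' non-defining primes for each family — concretely $\ell=2,3$ for $\tw2B_2$, $^2G_2$, $\tw2F_4$, $G_2$ and $\tw3D_4$, together with $\ell=5$ or $7$ in the handful of cases where $\Phi_d(q)$ picks up a square — where the defect groups can be non-abelian.

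For the non-principal blocks among these, I would invoke the general reductions of Section~\ref{sec:cross class}: Lemma~\ref{lem:BDR} reduces any block whose associated semisimple label $s$ is non-isolated to a proper Levi, which for these small-rank groups is a torus times a classical or smaller exceptional group already handled, and Proposition~\ref{prop:Enguehard} handles the isolated non-unipotent blocks for $\ell\ge5$. That leaves, for each family, the unipotent blocks and (for $\ell=2,3$) the isolated blocks at bad primes. Here I would appeal to the explicit decomposition-matrix and character-table information: for $\tw3D_4(q)$ the principal $2$- and $3$-blocks are exactly the content of Proposition~\ref{prop:k(B) 3D4} (Himstedt, \cite{H07,HH13}), which together with the known generic structure of the defect groups $D$ (wreath-type $\ell$-groups whose $k(D)$ and $k(D')$ are bounded below via Propositions~\ref{prop:p-group} and the wreath-product estimates of Lemma~\ref{lem:k(D)}) makes $\Ca$ and $\Cb$ a direct numerical comparison; for $G_2(q)$ the blocks and their invariants are classified in the literature (Hiss--Shamash and others), and for the Suzuki and Ree groups $\tw2B_2(q^2)$, $^2G_2(q^2)$, $\tw2F_4(q^2)'$ the full ordinary and modular character theory is known, so $k(B)$, $k_0(B)$, $l(B)$ are available explicitly and one simply checks $k(B)/k_0(B)\le k(D')$ and $k(B)/l(B)\le k(D)$ block by block.

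The main obstacle is the bookkeeping for the small primes $\ell\in\{2,3\}$, where $\Ca$ is the delicate inequality: one must verify $k(D')\ge k(B)/k_0(B)$, and when $D$ is only mildly non-abelian $k(D')$ can be small (as low as the order of a Klein four group or $C_\ell\times C_\ell$), so the bound has to be established by pinning down $D'$ precisely rather than by a soft estimate. Concretely this means: identify the defect group $D$ as a known $\ell$-group (typically a member of the $D_{i,m}$ family of Lemma~\ref{lem:k(D)} or a Sylow subgroup of a small classical group), compute or bound $k(D')$ from that description, and compare with the explicit $k(B)/k_0(B)$ from the sources above. For $\tw3D_4(q)$ this is exactly why Proposition~\ref{prop:k(B) 3D4} is stated separately, and for the other four families the analogous data is small enough to tabulate. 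In all cases the finitely many genuinely small parameters (small $q$, $\ell=2,3$) can, if needed, be settled by direct computation in GAP \cite{GAP}, as was done for the sporadic and exceptional-cover cases in Propositions~\ref{prop:spor} and~\ref{prop:exc cover}.
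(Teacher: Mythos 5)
Your overall strategy matches the paper's: discard the defining prime via Corollary~\ref{cor:def char}, discard abelian defect via Theorem~\ref{thm:abelian} (which, by \cite[Thm.~25.14]{MT}, confines the problem to $\ell\in\{2,3\}$ for these five families --- no $\ell=5,7$ cases actually arise, since $5$ and $7$ do not divide the relevant Weyl group orders), and then settle the surviving blocks by combining the explicit block classifications (Hiss--Shamash for $G_2(q)$, Deriziotis--Michler and Himstedt for $\tw3D_4(q)$, the unipotent character data for $\tw2F_4(q^2)$) with a precise identification of the defect groups as extensions of homocyclic groups, from which $k(D)$ and $k(D')$ are read off. That is exactly how the paper proceeds, including the observation that for $\tw2B_2$ and $\tw2G_2$ all non-defining Sylow subgroups are already abelian.

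One caveat on your treatment of the non-principal blocks. Proposition~\ref{prop:exc small} asserts the inequalities outright, not merely that the blocks are not minimal counterexamples; so Lemma~\ref{lem:BDR} and Proposition~\ref{prop:Enguehard} (which only deliver the ``not minimal'' conclusion, and the latter only for $\ell\ge5$ good) cannot be quoted as is --- you would have to follow the Morita equivalence down to the Levi and verify the inequalities there explicitly, which defeats the purpose. The paper instead disposes of the non-principal blocks with non-abelian defect directly from the classifications: their defect groups turn out to be semidihedral, metacyclic, or central products of a semidihedral with a cyclic group, and these are covered by Sambale's results (Proposition~\ref{prop:Sambale}). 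Your proposal does not mention this tool, and it is the one genuinely needed ingredient your outline is missing for, e.g., the isolated $2$-block of $G_2(q)$ attached to an element of order~$3$ and the $2$-blocks of non-maximal defect of $\tw3D_4(q)$. With that substitution your argument goes through as in the paper.
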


\begin{proof}
By Theorem~\ref{thm:def char} we may assume that $\ell$ does not divide $q^2$.
By Theorem~\ref{thm:abelian}, we can furthermore discard the cases when Sylow
$\ell$-subgroups of $G$ are abelian. Hence by \cite[Thm.~25.14]{MT}, the prime
$\ell$ has to divide the order of the Weyl group of $\bG$.  \par
For the Suzuki groups $\tw2B_2(q^2)$ as well as for the Ree groups
$^2G_2(q^2)$ all Sylow $\ell$-subgroups for non-defining primes $\ell$
are abelian. The group $\tw2F_4(2)'$ was already treated in
Proposition~\ref{prop:spor}. For the Ree groups $G=\tw2F_4(q^2)$,
$q^2=2^{2f+1}$ with $f\ge1$, by \cite[Bem.~2]{Ma2F4}, only the principal
3-block $B$ has non-abelian defect groups. Here,
$k(B)=(3^{2a}+36\cdot3^a+555)/48$, where $3^a=(q^2+1)_3$, while a Sylow
3-subgroup $D$ of $G$ is an extension of a homocyclic group
$C_{3^a}\times C_{3^a}$ with the cyclic group of order~3. In particular
$k(D)\ge 3^{2a-1}$, and $\Cb$ follows when $a>1$. For $a=1$ we have $k(B)=14$,
$k(D)=11$ and $l(B)\ge2$, so again we are done. It is easy to see from the
character degrees given in \cite{Ma2F4} that furthermore $k_0(B)=9$, and as
we have $k(D')=3^{2a-1}$ this also shows~$\Ca$.
\par
The only relevant primes for the groups $G=G_2(q)$  are $\ell=2,3$. The blocks
and their invariants have been determined by Hiss and Shamash \cite{HS90,HS92}.
When $\ell=2$, the principal 2-block has $k(B)=9+2^{a+1}+(4^{a-1}-1)/3$,
$k_0(B)=8$ and $l(B)=7$, where $2^a$ is the precise power of 2 dividing
$q-\eps$, with $\eps\in\{\pm1\}$ such that $q\equiv\eps\pmod4$. A Sylow 
2-subgroup $D$ of $G$ is contained
in the normaliser of a maximally split torus or of a Sylow 2-torus and thus
is an extension of a homocyclic group $C_{2^a}\times C_{2^a}$ with a Klein
four group. So $k(D)\ge 2^{2a-2}$. Inequality~$\Cb$ follows by using that
necessarily $2^a\ge4$. Moreover, we have $D'$ is abelian of order~$2^{2a-1}$,
so $\Ca$ is also satisfied.
\par
For $q\equiv\pm1\pmod{12}$ there is a further 2-block $B$ in the Lusztig series
of an isolated element $s$ of order~3
whose defect groups are isomorphic to
Sylow 2-subgroups of $C_G(s)\cong\SL_3(q)$ or $\SU_3(q)$ respectively
(see \cite{HS92}). Thus they are central products of a semidihedral group with
a cyclic group and thus covered by Proposition~\ref{prop:Sambale}. All other
blocks have abelian or semidihedral defect groups.
 \par
When $\ell=3$ only the principal 3-block $B$ of $G_2(q)$, $3{\not|}q$,
has non-abelian defect
groups, with $k(B)=8+2\cdot 3^a+(3^a-3)^2/12$, $k_0(B)=9$ and $l(B)=7$, where
$3^a$ is the precise power of~3 dividing $q-\eps$, with $q\equiv\eps\pmod3$.
A Sylow 3-subgroup $D$ of $G_2(q)$ is contained in a subgroup $\SL_3(\eps q)$.
Thus, it is an extension of a homocyclic group $C_{3^a}\times C_{3^a}$ with the
cyclic group of order~3. So clearly $k(D)\ge 3^{2a-1}$ and the derived
subgroup is abelian of index~9, so $k(D')= 3^{2a-1}$. The inequalities follow
from this.
\par
The blocks of groups $\tw3D_4(q)$ were determined by Deriziotis--Michler
\cite{DM87}. In particular, the defect groups of any 2-block of non-maximal
defect are either abelian, semidihedral or semidihedral central product with
abelian, so by Proposition~\ref{prop:Sambale} we need not consider these
further. Any Sylow 2-subgroup $D$ of $\tw3D_4(q)$ is contained in a subgroup
$G_2(q)$ and thus, as we saw above, $k(D)\ge 2^{2a-2}$ and $k(D')=2^{2a-1}$,
with $2^a=(q-\eps)_2$ where $q\equiv\eps\pmod4$. Moreover, the invariants
$k(B)$ (given in Proposition~\ref{prop:k(B) 3D4}), $k_0(B)=8$ and $l(B)=7$ are
the same as for the principal 2-block of $G_2(q)$. The inequalities are thus
satisfied for $a\ge2$ (which by the definition of $\eps$ is always the case
here).

For $\ell=3$ any block with non-abelian defect has the Sylow 3-subgroups
as defect groups by \cite[Prop.~5.4]{DM87}. The only 3-block of maximal defect
is in fact the principal block, with $k(B)=7+3^{a+1}+(9^a-1)/4$ (see
Proposition~\ref{prop:k(B) 3D4}), $k_0(B)=9$ and $l(B)=7$.
Depending on the congruence of $q$ modulo~3, a Sylow 3-subgroup $D$ of $G$ is
contained in the normaliser of a maximally split torus of $G$ or of its Ennola
dual. Hence it is an extension of an abelian group $C_{3^a}\times C_{3^{a+1}}$
by the cyclic group of order~3, and $k(D)\ge 3^{2a}$. It contains a Sylow
3-subgroup of $G_2(q)$, from which we deduce that $k(D')\ge 3^{2a-1}$. The
claim follows.
\end{proof}

\subsection{Exceptional groups of large rank}

To deal with the exceptional groups of large rank, we need some preparations.
We continue to use the setup from Section~\ref{sec:def char} with $\bG$ a
simply connected simple algebraic group with Frobenius map $F$.

\begin{lem}   \label{lem:bound unip}
 Let $\bG$ be simple of one of the types given in Table~\ref{tab:nr unip}. Then
 the number of unipotent characters $|\cE(\bL^F,1)|$ for any proper $F$-stable
 Levi subgroup $\bL<\bG$ is bounded above as shown.
\end{lem}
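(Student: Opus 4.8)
The statement is a purely finite, type-by-type assertion: for each simple type $\bG$ listed in Table~\ref{tab:nr unip}, produce an upper bound for $|\cE(\bL^F,1)|$ valid over all proper $F$-stable Levi subgroups $\bL<\bG$. The plan is to reduce this to a bounded computation. First I would recall that the number of unipotent characters $|\cE(\bL^F,1)|$ of a connected reductive group with Frobenius $F$ depends only on the $F$-conjugacy class of the pair $(W_\bL, F)$, where $W_\bL$ is the Weyl group of $\bL$ acting together with the twisting; in particular it is insensitive to the isogeny type and to $q$, and is given by a fixed combinatorial count (the number of unipotent characters of each irreducible factor, obtained from Lusztig's classification via symbols/families, multiplied over the factors). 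So the problem becomes: enumerate, up to $W$-conjugacy, the proper $F$-stable Levi subgroups of $\bG$ — equivalently the proper parabolic subsystems $\Psi\subsetneq\Phi$ of the root system stable under the automorphism induced by $F$ — and for each compute $\prod_j |\cE(\bL_j^F,1)|$ over the irreducible components $\bL_j$ (a torus factor contributing $1$).

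Second, I would organize the enumeration by the semisimple rank of $\bL$. Since $\bL$ is proper, its derived group has rank strictly less than that of $\bG$, and as rank drops the number of unipotent characters of each factor is small and tabulated (e.g.\ type $A_{m}$ contributes $p(m{+}1)$, type $B_m/C_m$ and $D_m$ contribute the classical symbol counts, and the exceptional types $G_2, F_4, E_6, \tw2E_6, E_7$ have $10, 37, 30, 30, 76$ respectively, with the twisted classical types governed by the appropriate relative symbol counts). The key point is that the maximum over proper Levis is attained at a Levi of corank~$1$, i.e.\ one obtained by removing a single node (or $F$-orbit of nodes) from the Dynkin diagram, possibly together with a central torus; smaller Levis have strictly fewer unipotent characters because the count is (sub)multiplicative and each factor's count grows with rank. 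So in practice one only needs to inspect the corank-$1$ Levis: for each such maximal proper Levi, read off the types of its simple components, multiply the corresponding tabulated unipotent-character counts, and take the maximum; that maximum is the claimed bound in Table~\ref{tab:nr unip}.

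Third, I would carry this out type by type for the large-rank exceptional groups appearing in the table (presumably $F_4$, $E_6$, $\tw2E_6$, $E_7$, $E_8$, and possibly $\tw3D_4$ / $^2F_4$ already handled), in each case listing the corank-$1$ Levi types. For instance, for $E_8$ the maximal proper Levis have derived types among $E_7, D_7, A_7, A_8, A_1A_6, A_2A_1A_4, \dots$ (the eight single-node deletions), and one checks that the largest value of the product of unipotent-character counts over these is the tabulated bound; analogously for $E_7$ (maximal Levis of type $E_6, D_6, A_6, A_1D_5, A_2A_4,\dots$), for $E_6$ and $\tw2E_6$ (with the relative root system of type $F_4$ in the twisted case), and for $F_4$ (maximal Levis $B_3, C_3, A_2{\tilde A}_2, A_1{\tilde A}_2$, etc.). In each case the verification is a finite lookup plus one round of arithmetic.

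\textbf{Main obstacle.} The conceptual content is light; the real work — and the place where errors creep in — is the bookkeeping: correctly enumerating the $F$-stable maximal proper Levi subgroups (in the twisted cases $\tw2E_6$ and $\tw3D_4$ one must fold the diagram by the graph automorphism first, and a proper $F$-stable Levi can have twisted classical components whose unipotent-character count differs from the untwisted one), and using the right unipotent-character counts for those twisted components. I would guard against this by cross-checking the counts against Lusztig's tables (e.g.\ \cite{Lu84} or the $\cE(\bG^F,1)$ data in standard references) and by confirming, for a couple of the listed types, that no non-corank-$1$ Levi accidentally exceeds the corank-$1$ maximum. Once the enumeration is pinned down, the bounds in Table~\ref{tab:nr unip} follow immediately.
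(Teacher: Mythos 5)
Your proposal follows essentially the same route as the paper: the number of unipotent characters depends only on the root system together with its $F$-action, so by multiplicativity over components and induction one reduces to the corank-one Levi subgroups obtained by deleting a single node (or $F$-orbit of nodes) from the Dynkin diagram, and then reads off the maxima from the tabulated counts. One caution on your illustrative lists: $A_8$ in $E_8$ and $A_2\times A_2$ in $F_4$ are full-rank Borel--de Siebenthal subsystems, not parabolic ones, so they must be excluded from the enumeration --- harmless here since their counts do not exceed the claimed bounds, but admitting maximal-rank subsystems in general (e.g.\ $D_8\subset E_8$) would violate the stated bound of $76$.
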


\begin{table}[htb]
\caption{Numbers of unipotent characters and upper bounds}  \label{tab:nr unip}
$$\begin{array}{c|cccccccccccc}
 \quad G& A_1& A_2& B_2& A_3& B_3& A_4& B_4& D_4& \tw2D_4& F_4\\
\noalign{\hrule}
   |\cE(G,1)|& 2& 3& 6& 5& 12& 7& 25& 14& 10& 37\\
 |\cE(L,1)|\le& 0& 2& 2& 3&  6& 5& 12&  8&  5& 12\\
\end{array}$$
$$\begin{array}{c|ccccccccccc}
 \quad G& A_5& \tw{(2)}D_5& A_6& D_6& \tw2D_6& \tw{(2)}E_6& A_7& \tw{(2)}D_7& E_7& E_8\\
\noalign{\hrule}
   |\cE(G,1)|& 11& 20& 15& 42& 36& 30& 22& 65& 76& 166\\
 |\cE(L,1)|\le& 9& 14& 15& 28& 20& 20& 25& 42& 42& 76\\
\end{array}$$
\end{table}

\begin{proof}
By Lusztig's results the number of unipotent characters of the $F$-fixed points
of a connected reductive group $\bH$ only depends on the root system of $\bH$
and the action of $F$ on it (see e.g.~\cite{BMM}). Thus, inductively we are
done if we can show that
for any indecomposable parabolic subsystem $\Phi$ of the root system of $\bG$,
and for every maximal parabolic subsystem $\Psi$ of $\Phi$ the number of
unipotent characters of any connected reductive group with root system $\Psi$
is smaller than the stated bound. Now the maximal parabolic subsystems of
$\Phi$ are obtained by removing one node in the Dynkin diagram of $\Phi$.
From this together with the list of numbers of unipotent characters reproduced
in Table~\ref{tab:nr unip} it is now straightforward to conclude. As an
example, when $\Phi$ has type $F_4$, the maximal parabolic subsystems are of
types $B_3$,$A_2A_1$ and $C_3$, with $12,6,12$ unipotent characters
respectively, consistent with the claimed bound.
\end{proof}

\begin{prop}   \label{prop:unip exc}
 Let $B$ be a unipotent $\ell$-block of a quasi-simple group of exceptional
 Lie type, with $\ell\ge5$. Then $B$ is neither a minimal counterexample to~$\Ca$
 nor to~$\Cb$.
\end{prop}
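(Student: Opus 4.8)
The plan is to reduce to a manageable list of cases using the general reduction machinery already set up, and then treat the remaining unipotent blocks of exceptional type group-by-group, relying on the explicit knowledge of unipotent blocks in the literature (Enguehard, Broué--Malle--Michel, Kessar--Malle).

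First I would invoke Proposition~\ref{prop:exc cover} to discard exceptional covering groups, and Proposition~\ref{prop:exc small} to dispose of the small-rank series $G_2$, $\tw3D_4$, and the Suzuki/Ree groups; so it remains to treat $F_4(q)$, $E_6(q)$, $\tw2E_6(q)$, $E_7(q)$, $E_8(q)$ with $\ell\ge5$ not the defining prime. Next, by Theorem~\ref{thm:abelian} we may assume the defect group $D$ of $B$ is nonabelian, which by \cite[Thm.~25.14]{MT} forces $\ell$ to divide $|W|$; since $\ell\ge5$ this leaves only $\ell=5$ (for $F_4,E_6,\tw2E_6,E_7,E_8$) and $\ell=7$ (for $E_7,E_8$). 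For these few primes one knows explicitly, from Enguehard's classification of unipotent $\ell$-blocks for $\ell$ good (and the surrounding literature), the list of $d$-cuspidal pairs $(\bL,\la)$ parametrising the unipotent $\ell$-blocks, the structure of the defect groups, and the values $k(B)$, $k_0(B)$, $l(B)$. When $C_{\bG^*}^\circ(1)=\bG^*$ trivially, but the point is whether the block ``comes from'' a proper Levi: if the $d$-cuspidal pair has $\bL<\bG$ and the relative Weyl group is small, one may often compare with a smaller group as in Lemma~\ref{lem:one block} or reduce via Lemma~\ref{lem:BDR}-type arguments; otherwise the block is one of finitely many ``genuinely exceptional'' principal-type blocks whose invariants are tabulated.

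The core of the argument is then a finite check. For each such block $B$ with nonabelian defect group $D$: to verify $\Ca$ one shows $k(B)/k_0(B)\le k(D')$, and to verify $\Cb$ one shows $k(B)/l(B)\le k(D)$. The defect groups here are homocyclic-by-(small group), e.g. of the shape $C_{\ell^a}\wr C_\ell$ or $C_{\ell^a}^m$ extended by a small subgroup of the relative Weyl group, just as in the classical cases; for these one gets lower bounds $k(D)\ge\ell^{ma-\text{(small)}}$ and $k(D')\ge\ell^{(\text{rank}-1)a-\text{(small)}}$ by the same wreath-product estimates as in Lemma~\ref{lem:k(D)} (or from Proposition~\ref{prop:p-group} when $D$ is not too large). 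On the other side, $k(B)$ is bounded above in terms of the number of irreducible characters of the relative Weyl group and the invariants $k(\ell,a,1,w)$-type quantities, which for the exceptional Weyl groups are uniformly bounded by constants; and $k_0(B)$, $l(B)$ are bounded below by $k_0(b(\bL,\la))\ge$ (a fixed factor), $l(B)\ge$ (number of unipotent characters of the relative Weyl group). Since $a\ge1$ always and $a$ is large once $q$ is, the inequalities hold for all but finitely many $q$, and for those finitely many $q$ (where $(q^d-1)_\ell=\ell$) one checks directly from the explicit character-theoretic data, using Lemma~\ref{lem:bound unip} to control $|\cE(\bL^F,1)|$ and hence $l(B)$.

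The main obstacle I anticipate is the bookkeeping for the handful of ``$\Phi_d$-blocks'' where the defect group is abelian of large rank but the relative Weyl group $W_G(\bL,\la)$ is itself of exceptional type with many irreducible characters --- for instance $d=1$ or $2$ blocks of $E_8(q)$ with relative Weyl group $W(E_8)$ --- so that $k(B)$ is genuinely large; there one must argue that $l(B)=|\Irr(W_G(\bL,\la))|$ and $k(D)$ (a Sylow $\ell$-subgroup of a large torus normaliser) are also correspondingly large, which requires the precise structure of $N_G(\bL)/\bL$ rather than crude estimates. A secondary nuisance will be the isolated-but-still-unipotent situations and the handful of small $q$ with $\ell\in\{5,7\}$, where one simply falls back on Lemma~\ref{lem:one block}, Proposition~\ref{prop:p-group}, and the tabulated invariants; these are routine but numerous. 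I expect the proof to be a case distinction over the five exceptional types, in each case splitting off the large-$q$ generic estimate from a short finite list of small-$q$ exceptions verified against the known block data.
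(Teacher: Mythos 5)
Your opening reductions match the paper's: discard exceptional covers, the small-rank types, and abelian defect groups, so that $\ell\in\{5,7\}$ divides $|W|$; and the classification of unipotent blocks by $d$-cuspidal pairs then leaves (for $\ell\ge5$) only the principal block $B_0$, since the non-principal unipotent blocks all have abelian defect groups here by \cite[Thm.\ (ii)]{CE94} --- a point your proposal circles around but never quite states. (The obstacle you flag about abelian defect groups of large rank with relative Weyl group $W(E_8)$ is moot for the same reason: you already disposed of abelian defect via Theorem~\ref{thm:abelian}.)

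The genuine gap is in the central estimate. You propose to bound $k(B)$ from above ``in terms of the number of irreducible characters of the relative Weyl group and $k(\ell,a,1,w)$-type quantities'' and otherwise to fall back on ``tabulated invariants''. Neither is available: for the principal $5$-block of $E_8(q)$, say, $\Irr(B_0)\subseteq\cE_\ell(G,1)=\coprod_t\cE(G,t)$ over classes of $\ell$-elements $t$, and these do not organise into multipartition combinatorics --- the centralisers $C_{G^*}(t)$ are arbitrary subsystem subgroups obtained from Borel--de Siebenthal (e.g.\ of type $A_4A_4$), not type-$A$ Levi subgroups of some $\GL_n$ --- nor is $k(B_0)$ tabulated anywhere. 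The paper's proof supplies exactly the missing idea, in two steps: (i) Jordan decomposition gives $|\cE(G,t)|=|\cE(C_{G^*}(t),1)|$, bounded uniformly by Lemma~\ref{lem:bound unip}; (ii) the number of $G$-classes of $\ell$-elements is compared with $k(D)$ by observing that a nontrivial element of $D\cap T$ lies in a $W$-orbit of length at least the index of the largest proper subgroup of $W$ while its $D$-class is short, yielding $k(B_0)\le c\,k(D)$ for an explicit small constant $c$. Then $c\le l(B_0)$ gives $\Cb$ at once, and $\Ca$ reduces to $c\,k(D)\le k_0(B_0)\,k(D')$, checked case by case using the explicit Sylow structure (contained in Levi subgroups of type $A_4$, $A_6$ or $A_4^2$) and lower bounds for $k_0(B_0)$ coming from height-zero characters in Lusztig series of central $\ell$-elements of those Levis. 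Without step (ii) your ``generic estimate for large $a$'' has no upper bound on $k(B_0)$ to compare against $k(D)$ or $k(D')$, so the finite check you envisage cannot get started.
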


\begin{proof}
Let $G$ be a quasi-simple group of exceptional Lie type. By
Theorem~\ref{thm:def char} we may assume that $\ell$ is not the defining
characteristic of $G$. By Proposition~\ref{prop:exc small} we may also assume
that $G$ is of type $F_4$, $E_6$, $\tw2E_6$, $E_7$ or $E_8$.
Furthermore, by Proposition~\ref{prop:exc cover}, $G$ is not an exceptional
covering group, so we may further assume that $G$ is a central quotient of a
group $\bG^F$ as above.  \par
If Sylow $\ell$-subgroups of $G$ are non-abelian, then $\ell$ divides the order
of the Weyl group of $\bG$ (see \cite[Thm.~25.14]{MT}). In particular
$\ell\le7$. Now it is easily seen
that Sylow $7$-subgroups can be nonabelian only when $G=E_7(q)$ or $E_8(q)$ and
$7|(q^2-1)$, and Sylow 5-subgroups are only non-abelian when $G$ is of type
$E$, and moreover $5|(q^2-1)$ when $G\ne E_8(q)$. The unipotent $\ell$-blocks
have been classified in \cite{CE94} for good primes, and for bad primes in
\cite{En00}. It ensues that for $\ell\ge5$ these are in bijection with
$d$-Harish-Chandra series of unipotent characters of $G$, where $d$ is the
order of $q$ modulo~$\ell$. But then from the explicit knowledge of these
series, \cite[Thm. (ii)]{CE94} shows that defect groups of non-principal
unipotent $\ell$-blocks are always abelian in our cases.  \par
Hence, we only need to consider the principal $\ell$-block $B_0$. Here
$$\Irr(B_0)\subseteq\cE_\ell(G,1)=\coprod_t\cE(G,t)$$
where the union runs over $\ell$-elements $t\in G^*$ up to conjugation. In
particular the union certainly has at most $k(D)$ terms, where $D$ is a Sylow
$\ell$-subgroup of $G$. In order to prove our inequalities, we will do two
things: first bound $|\cE(G,t)|$ suitably, and secondly relate the number of
conjugacy classes of $\ell$-elements in $G$ to $k(D)$.
\par
For the first step, observe that by Lusztig's Jordan decomposition $\cE(G,t)$
is in bijection with $\cE(C_{G^*}(t),1)$, hence we need to control the number
of unipotent characters of the centralisers of $\ell$-elements in $G$. The
candidates for these centralisers can easily be enumerated by the algorithm of
Borel--de Siebenthal from the extended Dynkin diagram \cite[Thm.~B.18]{MT}
(note that in types $E_n$ all maximal subsystems are necessarily closed).
Since $\ell$ is prime to $|Z(\bG)|$ all such
centralisers are connected. Moreover, unless $G$ is of type $E_8$ and $\ell=5$,
the prime $\ell$ is good for $G$ and thus the centraliser of any non-trivial
$\ell$-element of $G^*$ is even a Levi subgroup. Here, an upper bound for
$|\cE(C_{G^*}(t),1)|$ is given in Table~\ref{tab:nr unip}. When $\ell=5$
in $G=E_8(q)$, the only isolated centraliser of a 5-element is of type
$A_4A_4$, which has at most~49 unipotent characters (depending on its rational
type). Note that when $q^2\equiv4\pmod5$ then there is no 5-element in $E_8(q)$
with centraliser of type $E_7$, $D_6$ or $D_7$, so in that case the number of
unipotent characters of any proper centraliser is bounded above by~36.  \par
For the comparison of conjugacy classes in $G$ and in $D$ let's start with the
case of $G=E_6(q)$ with $\ell=5|(q-1)$. Then a Sylow 5-subgroup $D$ of $G$ is
contained in the normaliser of a maximally split torus $T$ of $G$. If
$1\ne t\in D\cap T$ then $t$ is conjugate to at least 27 elements of
$D$ (the index of the largest proper subgroup of the Weyl group $W$ of $G$),
while in $D$ it is conjugate to at most 5 elements. So the number of conjugacy
classes of such elements in $G$ is at most one fifth of their number in $D$.
On the other hand, if $t\in D\setminus T$ then it centralises a Sylow 5-torus
of $G$, hence its centraliser lies in a subgroup of type $A_1(q).(q^5-1)$. In
particular $|\cE(C_{G^*}(t),1)|\le2$. Taking together our results we find that
$k(B_0)\le c k(D)$ with $c=30/5=6$.  

\begin{table}[htb]
\caption{$l(B_0)$, $c$ and $k_0(B_0)$ in exceptional types}  \label{tab:l(B_0)}
$$\begin{array}{c|ccccccc}
    G& ^{\eps}\!E_6(q)& E_7(q)& E_7(q)& E_8(q)& E_8(q)& E_8(q)\\
 \ell& q\equiv\eps1\,(5)& q^2\equiv1\,(5)& q^2\equiv1\,(7)&
     q^2\equiv1\,(5)& q^2\equiv4\,(5)& q^2\equiv1\,(7)\\
\noalign{\hrule}
   l(B_0)& 25& 60& 60& 112& 59& 112\\
        c&  6&  1&  15/4&  25/4&  3/4&  38/17\\
 k_0(B_0)\ge& 5+5^{a+1}& 14& & 40& 20& \\
\noalign{\hrule}
\end{array}$$
\end{table}

With exactly the same arguments we obtain the constants $c$ listed in
Table~\ref{tab:l(B_0)} in all cases except for $E_7(q)$ with $\ell=5$. In the
latter case we need to be a bit more careful. The centre of $D$ has order
$5^{3a}$, all other elements in $D$ lie in orbits of length at least 2520
under the action of $W$, and their centraliser in $G$ is of type $A_3+A_3+A_1$
or smaller, so a corresponding Lusztig series has at most 50 elements. Thus
we obtain $k(B)\le 65\cdot 5^{3a}+50(5^{7a+1}-5^{3a})/2520$. With
$k(D)=5^{2a}((5^{5a}-5^a)/5+5^{a+1})$ this yields the bound $c=1$.
\par
The number $l(B_0)$ is given in Table~\ref{tab:l(B_0)}. (When $\ell$ is good
for $G$ then by \cite{GH91} this is just $|\cE(G,1)\cap\Irr(B_0)|$.) As visibly
$c\le l(B_0)$ in all cases we have shown~$\Cb$.
\par
We now turn to~$\Ca$. Here it suffices to see that
$$c\,k(D) \le k_0(B_0)\,k(D').$$
First consider $G=E_6(q)$, with $\ell=5$ dividing $q-1$. A Sylow 5-subgroup
$D$ of $G$ is contained in a Levi subgroup of type $A_4$ and thus isomorphic to
a Sylow 5-subgroup of $\GL_5(q).(q-1)$. Thus $k(D)=5^a((5^{5a}-5^a)/5+5^{a+1})$
and $k(D')=5^{4a}$, where $5^a$ is the precise power of~5 dividing $q-1$ (see
Lemma~\ref{lem:k(D)}).
Furthermore, there are 10 unipotent characters of height~0 in $B_0$. The
centre of a Levi subgroup of $G^*$ of type $A_4$ contains an abelian subgroup
$A$ of order $5^{2a}$ any element of which is $G^*$-conjugate to at most one
further element of $A$. The corresponding Lusztig series each contain 5
characters in $B_0$ of height~0, so $k_0(B_0)\ge 10+5(5^{2a}-1)/2$. We obtain
that $k_0(B_0)\,k(D')/k(D)\ge 25/2>c=6$.
Exactly the same arguments apply for $G=\tw2E_6(q)$ with $5|(q+1)$.
\par
For $G=E_7(q)$ with $\ell=5$ dividing $q-1$ a Sylow $5$-subgroup $D$
of $G$ is again contained in a Levi subgroup of type $A_4$, so is isomorphic
to a Sylow $5$-subgroup of $\GL_5(q).(q-1)^2$. Hence we have
$k(D)=5^{2a}((5^{5a}-5^a)/5+5^{a+1})$ and $k(D')=5^{4a}$. Besides the 30
unipotent characters of height~0 in $B_0$ there are at least $5(5^{3a}-1)/12$
further such characters in the Lusztig series of 5-elements in the centre of
a Levi subgroup of $G^*$ of type $A_4$. Thus $k_0(B_0)\,k(D')/k(D)\ge25/12>c=1$.
The case when $5|(q+1)$ is completely analogous.
\par
For $G=E_7(q)$ with $\ell=7$ dividing $q-1$ a Sylow $7$-subgroup $D$ of $G$ is
contained in a Levi subgroup of type $A_6$, so isomorphic to a Sylow
$7$-subgroup of $\GL_7(q)$. Hence we have $k(D)=(7^{7a}-7^a)/7+7^{a+1}$ and
$k(D')=7^{6a}$. Besides the 14 unipotent characters of height~0 in $B_0$ there
are further $7(7^a-1)/2$ such characters in the $(7^a-1)/2$ Lusztig series
of 7-elements in $G^*$ with centraliser containing $A_6(q)$. Again
$k_0(B_0)\,k(D')/k(D)\ge 24>c=15/4$.
\par
For $E_8(q)$ with $\ell=5|(q-1)$, a Sylow 5-subgroup $D$ of $G$ is contained in
a maximal rank subgroup of type $A_4(q)^2$, so it is a homocyclic group of
order $5^{8a}$ extended by an elementary abelian group of order~25,
where $5^a$ is the precise power of 5 dividing $q-1$. Here
$k(D)=(5^{4a-1}+24)^2$ and $k(D')=5^{8a-2}$, and furthermore $k_0(B_0)=40$.
So we have $k_0(B_0)\,k(D')/k(D)>28>c$.

For $E_8(q)$ with $\ell=5|(q^2+1)$, a Sylow 5-subgroup $D$ of $G$ is contained
in a maximal rank subgroup $\tw2A_4(q^2)$, hence isomorphic to a Sylow
5-subgroup of $\SU_5(q^2)$. So we have $k(D)=5^{4a-1}+24$ and
$k(D')=5^{4a-1}$, where $5^a=(q^2+1)_5$. Since $k_0(B_0)=20$ we get
$k_0(B_0)\,k(D')/k(D)>16>c$.

For $G=E_8(q)$ with $\ell=7$ dividing $q-1$ a Sylow $7$-subgroup $D$ of $G$ is
contained inside a Levi subgroup of type $A_6$, so isomorphic to a Sylow
$7$-subgroup of $\GL_7(q).(q-1)$. Hence we have
$k(D)=7^a((7^{7a}-7^a)/7+7^{a+1})$ and $k(D')=7^{6a}$. Besides the 28 unipotent
characters of height~0 in $B_0$ there are 7 height zero characters in at least
$(7^{2a}-1)/4$ further Lusztig series of 7-elements in $G^*$ in the centre of
a Levi subgroup of type $A_6$. Thus $k_0(B_0)\,k(D')/k(D)>12>c$, which
completes the proof.
\end{proof}

\begin{thm}   \label{thm:exc}
 Let $B$ be an $\ell$-block of a quasi-simple group of exceptional Lie type,
 with $\ell\ge5$. Then $B$ is not a minimal counterexample to~$\Ca$ or~$\Cb$.
\end{thm}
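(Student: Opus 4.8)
The plan is to strip away, using the results of this paper established so far, all cases except the one genuinely new configuration. By Corollary~\ref{cor:def char} I may assume $\ell$ is not the defining characteristic of $G$, and by Proposition~\ref{prop:exc cover} that $G$ is not an exceptional covering group, so that $G=\bG^F/Z$ with $\bG$ simple of simply connected type and $Z\le Z(\bG^F)$; since $\ell\ge5$ is prime to $|Z(\bG^F)|\le3$ it is enough to treat the $\ell$-blocks of $\bG^F$ itself, so I take $G=\bG^F$. By Proposition~\ref{prop:exc small} I may assume $\bG$ is of type $F_4$, $E_6$, $\tw2E_6$, $E_7$ or $E_8$. By Brou\'e--Michel $\Irr(B)\subseteq\cE_\ell(G,s)$ for a semisimple $\ell'$-element $s\in G^*$; Lemma~\ref{lem:BDR} reduces us to $s$ isolated and Proposition~\ref{prop:unip exc} to $s\ne1$, while Proposition~\ref{prop:Enguehard} settles everything when $\ell$ is good for $\bG$. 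As $\ell\ge5$ is good for $F_4$, $E_6$, $\tw2E_6$ and $E_7$, only one configuration survives: $G=E_8(q)$, $\ell=5$, and $B$ lies in an isolated, non-unipotent series $\cE_5(G,s)$.

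For this case I would run through the finitely many isolated classes of semisimple $5'$-elements of $E_8$. Put $\bC^*:=C_{\bG^*}(s)$, which is connected since $Z(\bG^*)=1$, let $\bC$ be dual to $\bC^*$, and set $C:=\bC^F$, $C^*:=\bC^{*F}=C_{G^*}(s)$. An element of order prime to $5$ cannot have centraliser of type $A_4A_4$ (the glue group there has order $5$), so $\bC^*$ is a proper maximal-rank subsystem subgroup of type $A_8$, $A_7A_1$, $A_5A_2A_1$, $A_3D_5$, $D_6A_1^2$, $D_8$, $A_2E_6$ or $A_1E_7$, or a product of smaller classical pieces. Whenever $5\nmid|W(\bC^*)|$ a Sylow $5$-subgroup of $C^*$ --- and hence a defect group of $B$ --- is abelian by \cite[Thm.~25.14]{MT}, and then $B$ is not a counterexample by Theorem~\ref{thm:abelian}; the same holds in every type all of whose $A$-components have rank $\le8$, since then the weight is $<5$ and Sylow $5$-subgroups are again abelian. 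This leaves only the types containing a factor $D_5$, $D_6$, $D_8$, $E_6$ or $E_7$, in which case the quasi-simple constituents of $C$ are of classical type, or of type $E_6$ or $E_7$, and $5$ is a \emph{good} prime for every one of them.

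In these remaining subcases I would transfer the invariants of $B$ to a unipotent $5$-block of $C$ by Jordan decomposition, following the pattern of Lemma~\ref{lem:one block}: each $5$-block $B'$ of $G$ inside $\cE_5(G,s)$ should correspond to a unipotent $5$-block $b'$ of $C$ with $k(B')=k(b')$ and $k_0(B')=k_0(b')$ (Lusztig's Jordan decomposition only rescales degrees by the $p'$-number $|G^*:C^*|_{p'}$, so it is height-preserving), with $l(B')=l(b')$ (unipotent characters form a basic set for the $5$-blocks of $C$ by \cite{GH91}, as $5$ is good for $C$), and with isomorphic defect groups. Where this common defect group is abelian, Theorem~\ref{thm:abelian} shows $B'$ is not a counterexample; otherwise, since $|C/Z(C)|<|G/Z(G)|$, were $B'$ a minimal counterexample to $\Ca$ (resp.\ $\Cb$) then $\Ca$ (resp.\ $\Cb$) would hold for $b'$, giving $k(B')/k_0(B')=k(b')/k_0(b')\le k(D')$ (resp.\ $k(B')/l(B')=k(b')/l(b')\le k(D)$) and contradicting the failure for $B'$ --- this is precisely the mechanism of Lemma~\ref{lem:one block}. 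Hence no $B'$, and in particular no $B$, is a minimal counterexample. (In fact $\Ca$ and $\Cb$ hold outright for each $b'$ by Theorems~\ref{thm:SLn} and~\ref{thm:class} and Proposition~\ref{prop:unip exc}.)

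The hard part will be exactly this last transfer: a block-theoretic Jordan decomposition was available above only via \cite{BDR17}, which needs $\bC^*$ inside a proper $F$-stable Levi of $\bG^*$ --- impossible here, $s$ being isolated --- and via \cite[Thm.~1.6]{En08}, which requires $\ell$ to be good, whereas $5$ is bad for $E_8$. So for $E_8(q)$ at $\ell=5$ I would argue the few surviving isolated classes one at a time, using the explicit structure of $\bC^*$ to pin down the $d$-cuspidal datum of $B$ and its defect group directly (as in the proof of Proposition~\ref{prop:unip exc}) and to compare the block distribution of $\cE_5(G,s)$ with the known distribution of unipotent $5$-blocks of the classical or $E_6$/$E_7$-type group $C$. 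The only real content is to verify, class by class, that the defect groups on the two sides coincide and that no block of nonabelian defect escapes the reduction to $C$.
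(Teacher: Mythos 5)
Your chain of reductions down to $G=E_8(q)$ with $\ell=5$ and $s\ne1$ isolated is exactly the paper's and is correct. The endgame, however, contains a concrete error and leaves the genuinely hard part unexecuted. First, the claim that every isolated class all of whose components are of type $A$ of rank $\le8$ has abelian defect because ``the weight is $<5$'' is false: for $q\equiv1\pmod5$ one has $d=1$, so a component of type $A_8$ contributes a $\GL_9$-type factor whose principal $5$-block has weight $9\ge5$ and non-abelian Sylow $5$-subgroups (a wreath product $C_{5^a}\wr C_5$ appears). Indeed the list quoted in the paper from \cite[Prop.~6.10]{KM13} of isolated $5$-blocks of $E_8(q)$ with non-abelian defect begins with $A_8$, $A_7+A_1$ and $A_5+A_2+A_1$; these are precisely the cases the paper disposes of via Lemma~\ref{lem:one block} (there $\cE_5(G,s)$ is a single block and $l(B)$ agrees with the unipotent side). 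Your argument silently discards them on false grounds.

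Second, for the cases you do retain ($D_8$, $E_7+A_1$, $D_5+A_3$, $E_6+A_2$), the transfer ``following the pattern of Lemma~\ref{lem:one block}'' does not go through as stated: that lemma requires $\cE_5(G,s)$ and $\cE_5(C,1)$ to form single blocks with isomorphic defect groups, and in exactly these cases $\cE_5(G,s)$ splits --- for $D_8$ into two blocks, one with trivial Harish-Chandra source and one above a cuspidal character of $D_4(q)$, only the latter of abelian defect. A height-preserving bijection exists Lusztig series by Lusztig series, but its compatibility with the block partition is precisely the point you defer to ``the hard part''; since $5$ is bad for $E_8$, neither \cite{BDR17} nor \cite{En08} supplies it. The paper avoids this issue entirely: for the non-abelian-defect block over $D_8$ it verifies $\Ca$ and $\Cb$ by direct numerical estimates, bounding $k(B)\le k(5,a,1,8)+k(5,a,1,4)$, identifying the defect group as $C_{5^a}^3\times C_{5^a}\wr C_5$ so that $k(D)\ge5^{8a-1}$ and $k(D')\ge5^{4a}$, and bounding $k_0(B)\ge5^{a+1}k(5^a,3)$, with the remaining three types treated analogously. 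So your proposal needs (i) the pure type-$A$ cases restored and handled, e.g.\ by Lemma~\ref{lem:one block} as in the paper, and (ii) an actual argument, rather than a plan, for the $D$- and $E$-type centralisers.
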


\begin{proof}
By Lemma~\ref{lem:BDR} we only need to consider isolated $\ell$-blocks $B$.
The unipotent blocks have been dealt with in Proposition~\ref{prop:unip exc}.
We are left with blocks labelled by isolated $\ell'$-elements $1\ne s\in G^*$.
By Proposition~\ref{prop:Enguehard}, $\ell$ is bad for $G$, so $G=E_8(q)$ and
$\ell=5$. The isolated 5-blocks with non-abelian defect were determined in
\cite[Prop.~6.10]{KM13}: they are those corresponding to semisimple
$5'$-elements $s$ with centraliser of type
$$A_8,\ A_7+A_1,\ A_5+A_2+A_1,\ D_8,\ E_7+A_1,\ D_5+A_3,\ E_6+A_2,$$
when $q\equiv1\pmod5$, their Ennola duals when $q\equiv-1\pmod5$, while there
are none when $q^2\equiv-1\pmod5$. In the first three cases, the corresponding
Brou\'e--Michel union $\cE_5(G,s)$ is a single 5-block $B$, and $l(B)$ agrees
with the corresponding number in the principal 5-block of $C_{G^*}(s)^*$, so we
are done by Lemma~\ref{lem:one block}.
\par
Next consider the case of centraliser $D_8(q)$. Here, $\cE_5(G,s)$ is the
union of two blocks, one with trivial Harish-Chandra source, the other above
a cuspidal character of $D_4(q)$. The second block has abelian defect,
and for the first we have
$$k(B)\le|\cE_5(G,s)|
  =k(5,a,1,8)+k(5,a,1,4)\le 2k(5^a,3)\cdot 5^{5a-2.7}+5^{4a}$$
by Lemmas~\ref{lem:k(B)} and~\ref{lem:k(a,b)}, while defect groups are of
the form $C_{5^a}^3\times C_{5^a}\wr C_5$, so
$$k(D)\ge 5^{8a-1},\quad k(D')\ge 5^{4a},\quad\text{and }
  k_0(B)\ge k(5^a,3)k(5^{a+1},1)=5^{a+1}k(5^a,3),$$
from which we may conclude.
\par
The remaining three cases are settled analogously.
\end{proof}


\end{document}